\documentclass[journal]{IEEEtran}
\ifCLASSINFOpdf
\usepackage[pdftex]{graphicx}
\else
\fi
%
%

%
\usepackage{amsmath, amssymb, amsthm}
\usepackage{amsfonts}
%

%
\usepackage{algorithmic}
\ifCLASSOPTIONcompsoc
\usepackage[caption=false,font=normalsize,labelfont=sf,textfont=sf]{subfig}
\else
\usepackage[caption=false,font=footnotesize]{subfig}
\fi

\usepackage{color}
\usepackage{xcolor}
\usepackage{tabularx}
\usepackage{multirow}

\hyphenation{op-tical net-works semi-conduc-tor}

\newcommand{\R}{\mathbb{R}}
\newcommand{\Gra}{\mathcal{G}}

\newcommand{\bx}{\mathbf{x}}
\newcommand{\by}{\mathbf{y}}

\newcommand{\barf}{f}

\newcommand{\df}{\nabla F}

\newcommand{\trace}{\mathrm{trace}}
\newcommand{\mR}{\mathbf{R}}
\newcommand{\mC}{\mathbf{C}}
\newcommand{\mA}{\mathbf{A}}
\newcommand{\mrR}{\mathrm{R}}
\newcommand{\mrC}{\mathrm{C}}
\newcommand{\mrE}{\mathrm{E}}
\newcommand{\mrT}{\mathrm{T}}
\newcommand{\mrS}{\mathrm{S}}
\newcommand{\mrD}{\mathrm{D}}
\newcommand{\mrQ}{\mathrm{Q}}
\newcommand{\mT}{\mathbf{T}}
\newcommand{\mD}{\mathbf{D}}
\newcommand{\mE}{\mathbf{E}}
\newcommand{\bE}{\mathbb{E}}
\newcommand{\mI}{\mathbf{I}}
\newcommand{\mV}{\mathbf{V}}
\newcommand{\balpha}{\boldsymbol{\alpha}}
\newcommand{\T}{\intercal}

\newcommand{\spa}[1]{\mathrm{span}\{#1\}}
\newcommand{\nul}[1]{\mathrm{null}\{#1\}}
\newcommand{\dia}[1]{\mathrm{diag}\left\{#1\right\}}

\newcommand{\inneighbor}[1]{\mathcal{N}^\mathrm{in}_{#1}}
\newcommand{\outneighbor}[1]{\mathcal{N}^\mathrm{out}_{#1}}
\newcommand{\mB}{\mathbf{B}}
\newcommand{\mM}{\mathbf{M}}
\newcommand{\mW}{\mathbf{W}}
\newcommand{\mS}{\mathbf{S}}
\newcommand{\mJ}{\mathbf{J}}
\newcommand{\mQ}{\mathbf{Q}}
\renewcommand{\trace}{\mathbf{tr}}

\definecolor{darkgreen}{rgb}{0.01, 0.75,0.24}

\newcommand{\mx}{\mathbf{x}}
\newcommand{\my}{\mathbf{y}}
\newcommand{\ox}{\bar{x}}
\newcommand{\oy}{\bar{y}}

\newtheorem{definition}{Definition}
\newtheorem{assumption}{Assumption}
\newtheorem{lemma}{Lemma}
\newtheorem{remark}{Remark}
\newtheorem{theorem}{Theorem}

\begin{document}
	%
	\title{Push-Pull Gradient Methods for Distributed Optimization in Networks}
	%
	%
	%
	
	\author{Shi~Pu,
		Wei~Shi, Jinming~Xu,
		and~Angelia~Nedi{\'c}
		\thanks{The first author and the second author contribute equally. Parts of the results appear in Proceedings of the 57th IEEE Conference on Decision and Control \cite{pu2018push}. (Corresponding authors: Shi Pu; Jinming Xu.)}
		\thanks{This work was supported in parts by the NSF grant CCF-1717391, the ONR grant no.\ N000141612245, and the SRIBD Startup Fund JCYJ-SP2019090001.}
		\thanks{S. Pu is with Institute for Data and Decision Analytics, The Chinese University of Hong Kong, Shenzhen, China and Shenzhen Research Institute of Big Data.  (e-mail: pushi@cuhk.edu.cn).}
		\thanks{W. Shi was with the Department of Electrical Engineering, Princeton University, Princeton, NJ 08854 USA (e-mail: Wilbur.Shi@princeton.edu).}
		\thanks{J. Xu is with the College of Control Science and Engineering, Zhejiang University, China (e-mail: jimmyxu@zju.edu.cn).}
		\thanks{A. Nedi{\'c} is with the School of Electrical, Computer, and Energy Engineering, Arizona
			State University, Tempe, AZ 85281 USA (e-mail: Angelia.Nedich@asu.edu).}
	}

	\maketitle
	
	\begin{abstract}
		In this paper, we focus on solving a distributed convex optimization problem in a network, where each agent has its own convex cost function and the goal is to minimize the sum of the agents' cost functions while obeying the network connectivity structure. In order to minimize the sum of the cost functions,
		we consider new distributed gradient-based methods where each node maintains two estimates, namely, an estimate of the optimal decision variable and an estimate of the gradient
		for the average of the agents' objective functions. 
		From the viewpoint of an agent, the information about the gradients is pushed to the neighbors, while the information about the decision variable is pulled from 
		the neighbors hence giving the name ``push-pull gradient methods". 
		The methods utilize two different graphs for the information exchange among agents, and as such, unify the algorithms with different types of distributed architecture, including decentralized (peer-to-peer), centralized (master-slave), and semi-centralized (leader-follower) architecture. We show that the proposed algorithms and their many variants converge linearly for strongly convex and smooth objective functions over a network (possibly with unidirectional data links) in both synchronous and asynchronous random-gossip settings. In particular, under the random-gossip setting, ``push-pull'' is the first class of algorithms for distributed optimization over directed graphs. Moreover, we numerically evaluate our proposed algorithms in both scenarios, and show that they outperform other existing linearly convergent schemes, especially for ill-conditioned problems and networks that are not well balanced.
	\end{abstract}
	
	\begin{IEEEkeywords}
		distributed optimization, convex optimization, directed graph, network structure, linear convergence, random-gossip algorithm, spanning tree.
	\end{IEEEkeywords}

	%
	\IEEEpeerreviewmaketitle

	\section{Introduction}
	%
	%
	%
	%
	\IEEEPARstart{I}{n} this paper, we consider a system involving $n$ agents whose goal
	is to collaboratively solve the following problem:
	\begin{equation} \label{problem}
	\begin{array}{c}
	\min\limits_{x\in\R^p}~\barf(x):=\sum\limits_{i=1}^n f_i(x),
	\end{array}
	\end{equation}
	where $x$ is the global decision variable and each function $f_i: \R^p\rightarrow \R$ is convex and known by agent $i$ only.
	The agents are embedded in a communication network, and their
	goal is to obtain an optimal and consensual solution through local neighbor communications and information exchange. This local exchange is desirable in situations where 
	the exchange of a large amount of data is prohibitively expensive due to limited communication resources.
	
	To solve problem~\eqref{problem} in a networked system of $n$ agents, many algorithms have been proposed under various assumptions on the objective functions and on the underlying networks/graphs. Static undirected graphs were extensively considered in the literature \cite{Shi2014,Shi2015,olshevsky2017linear,qu2017harnessing,scaman2017optimal}. 
	References \cite{nedic2009distributed,xu2017convergence,nedic2017achieving} studied time-varying and/or stochastic undirected networks.
	Directed graphs were discussed in~\cite{nedic2015distributed,nedic2016stochastic,zeng2017extrapush,nedic2017achieving,xi2017dextra,xi2018linear}.
	Centralized (master-slave) algorithms were discussed in~\cite{boyd2011distributed}, where extensive applications in learning can be found. Parallel, coordinated, and asynchronous algorithms were discussed in~\cite{peng2016arock} and the references therein. The reader is also referred to the recent paper~\cite{nedic2018network} and the references therein for a comprehensive survey on distributed optimization algorithms.
	
	In the first part of this paper, we introduce a novel gradient-based algorithm (Push-Pull) for distributed (consensus-based) optimization in directed graphs. Unlike the push-sum type protocol used in the previous literature~\cite{nedic2017achieving,xi2018linear}, our algorithm uses a row stochastic matrix for the mixing of the decision variables, while it employs a column stochastic matrix for tracking the average gradients. Although motivated by a fully decentralized scheme, we show that Push-Pull can work both in fully decentralized networks and in two-tier networks. 
	
	Gossip-based communication protocols are popular choices for distributed computation due to their low communication costs \cite{boyd2006randomized,lu2011gossip,lee2016asynchronous,mathkar2016nonlinear}.
	In the second part of this paper, we consider a random-gossip push-pull algorithm (G-Push-Pull) where at each iteration, an agent wakes up uniformly randomly and communicates with one or two of its neighbors. 	
	Both Push-Pull and G-Push-Pull have different variants. We show that they all converge linearly to the optimal solution for strongly convex and smooth objective functions.
	
	\subsection{Related Work}
	Our emphasis in the literature review is on the decentralized optimization, since our approach builds on a new understanding of the decentralized consensus-based methods 
	for directed communication networks. Most references, including \cite{Shi2014,Shi2015,mokhtari2016decentralized,varagnolo2016newton,olshevsky2017linear,scaman2017optimal,uribe2017optimal,xu2015augmented,nedic2017geometrically,di2016next,Shi2015_2,li2019decentralized}, often restrict the underlying network connectivity structure, or more commonly require doubly stochastic mixing matrices.
	The work in~\cite{Shi2014} has been the first to demonstrate the linear convergence of an ADMM-based 
	decentralized optimization scheme. Reference~\cite{Shi2015} uses a gradient difference structure in the algorithm to provide the first-order decentralized optimization algorithm which is capable of achieving the typical convergence rates of a centralized gradient method, while 
	references~\cite{mokhtari2016decentralized,varagnolo2016newton} deal with the second-order decentralized methods. 
	By using Nesterov's acceleration, reference~\cite{olshevsky2017linear} has obtained a method whose convergence time scales linearly in the number of agents $n$, which is the best scaling with $n$ currently known.
	More recently, for a class of so-termed dual friendly functions, 
	papers~\cite{scaman2017optimal,uribe2017optimal} have obtained an optimal decentralized consensus optimization algorithm whose dependency on the condition number\footnote{The condition number of a smooth and strongly convex function is the ratio of its gradient Lipschitz constant and its strong convexity constant.} of the system's objective function achieves the best known scaling in the order of $O(\sqrt{\kappa})$.
	Work in~\cite{Shi2015_2,li2019decentralized} investigates proximal-gradient methods which can tackle \eqref{problem} with proximal friendly component functions. Paper~\cite{wu2016decentralized} extends
	the work in~\cite{Shi2014} to handle asynchrony and delays. 
	References~\cite{pu2018swarming,pu2018distributed} considered a stochastic variant of problem~\eqref{problem} 
	in asynchronous networks. A tracking technique has been recently employed to develop
	decentralized algorithms for tracking the average of the Hessian/gradient in second-order methods~\cite{varagnolo2016newton}, allowing uncoordinated stepsize~\cite{xu2015augmented,nedic2017geometrically}, handling non-convexity \cite{di2016next}, and achieving linear convergence over 
	time-varying graphs~\cite{nedic2017achieving}.
	
	For directed graphs, to eliminate the need of constructing a doubly stochastic matrix in reaching consensus\footnote{Constructing a doubly stochastic matrix over a directed graph needs weight balancing which requires an independent iterative procedure across the network; consensus is a basic coordination technique in decentralized optimization.}, reference \cite{Kempe2003} proposes the push-sum protocol. Reference \cite{tsianos2012push} has been the first to propose a push-sum based distributed optimization algorithm for directed graphs. Then, based on the push-sum technique again, a decentralized subgradient method for time-varying directed graphs has been proposed and analyzed in~\cite{nedic2015distributed}. Aiming to improve convergence for a smooth objective function and a fixed directed graph, the work in~\cite{xi2017dextra,zeng2017extrapush} modifies the algorithm from~\cite{Shi2015} with the push-sum technique, thus providing a new algorithm which converges linearly for a strongly convex objective function on a static graph. However, the algorithm requires a careful selection of the stepsize which may be even non-existent in some cases~\cite{xi2017dextra}. This stability issue has been resolved in~\cite{nedic2017achieving} in a more general setting of time-varying directed graphs. The work in~\cite{xi2018linear,xin2019frost} considers an algorithm that uses only row-stochastic mixing matrices and still achieves linear convergence over fixed directed graphs.
	
	Simultaneously and independently, a paper \cite{xin2018linear} has proposed an algorithm that is similar to the synchronous variant proposed in this paper. By contrast, the work in \cite{xin2018linear} does not show that the algorithm unifies different architectures. Moreover, asynchronous or time-varying cases were not discussed either therein.
	
	\subsection{Main Contribution}
	The main contribution of this paper is threefold. First, we design new distributed optimization methods (Push-Pull and G-Push-Pull) and their many variants for directed graphs. These methods utilize two different graphs for the information exchange among agents, and as such, unify different computation and communication architectures, including decentralized (peer-to-peer), centralized (master-slave), and semi-centralized (leader-follower) architecture. To the best of our knowledge, these are the first algorithms in the literature that enjoy such property.
	
	Second, we establish the linear convergence of the proposed methods in both synchronous (Push-Pull) and asynchronous random-gossip (G-Push-Pull) settings. In particular, G-Push-Pull is the first class of gossip-type algorithms for distributed optimization over directed graphs.
	
	Finally, in our proposed methods each agent in the network is allowed to use a different nonnegative stepsize, and only one of such stepsizes needs to be positive. This is a unique feature compared to the existing literature (e.g., \cite{nedic2017achieving,xi2018linear}).
	
	Some of the results related to a variant of Push-Pull will appear in Proceedings of the 57th IEEE Conference on Decision and Control \cite{pu2018push}. In contrast, the current work analyzes a different, more communication-efficient variant of Push-Pull, adopts an uncoordinated stepsize policy which generalizes the scheme in \cite{pu2018push}, and introduces G-Push-Pull in extra. It also contains detailed proofs omitted from the conference version.
	
	\subsection{Organization of the Paper}
	The structure of this paper is as follows. We first provide notation and state basic assumptions in 
	Subsection~\ref{sec: problem}. Then we introduce the push-pull gradient method in Section~\ref{sec: Push-Pull} along with the intuition of its design and some examples explaining how it relates to (semi-)centralized and decentralized optimization. 
	We establish the linear convergence of the push-pull algorithm in Section~\ref{sec: conv_analysis}. In Section~\ref{sec: G-Push-Pull} we introduce the random-gossip push-pull method (G-Push-Pull) and demonstrate its linear convergence in Section~\ref{sec: conv_analysis_G-Push-Pull}.
	In Section~\ref{sec: simulation} we conduct numerical experiments to verify our theoretical claims. 
	Concluding remarks are given in Section \ref{sec: conclusion}.
	
	
	\subsection{Notation and Assumption}
	\label{sec: problem}
	Throughout the paper, vectors default to columns if not otherwise specified.
	Let $\mathcal{N}=\{1,2,\ldots,n\}$ be the set of agents. Each agent $i\in\mathcal{N}$ holds a local copy $x_i\in\mathbb{R}^p$ of the decision variable and an auxiliary variable $y_i\in\mathbb{R}^p$ tracking the average gradients, where their
	values at iteration $k$ are denoted by $x_{i,k}$ and $y_{i,k}$, respectively. Let
	\begin{eqnarray*}
		&\mx:=[x_1, x_2, \ldots,x_n]^{\T}\in\mathbb{R}^{n\times p},\\
		&\my:=[y_1, y_2, \ldots,y_n]^{\T}\in\mathbb{R}^{n\times p}.
	\end{eqnarray*}
	Define $F(\mx)$ to be an aggregate objective function of the local variables,
	i.e., $F(\mx):=\sum_{i=1}^nf_i(x_i)$, and write
	\begin{equation*}
	\nabla F(\mx):=\left[\nabla f_1(x_1)^{\T}, \nabla f_2(x_2)^{\T}, \ldots, \nabla f_n(x_n)^{\T}\right]\in\mathbb{R}^{n\times p}.
	\end{equation*}
	We use the symbol $\trace\{\cdot\}$ to denote the trace of a square matrix.
	
	\begin{definition}\label{def: norm n p}
		Given an arbitrary vector norm $\|\cdot\|$ on $\mathbb{R}^n$, for any $\mx\in\mathbb{R}^{n\times p}$, we define
		\begin{equation*}
		\|\mx\|:=\left\|\left[\|\mx^{(1)}\|,\|\mx^{(2)}\|,\ldots,\|\mx^{(p)}\|\right]\right\|_2,
		\end{equation*}
		where $\mx^{(1)},\mx^{(2)},\ldots,\mx^{(p)}\in\mathbb{R}^{n}$ are columns of $\mx$, and $\|\cdot\|_2$ represents the $2$-norm.
	\end{definition}
	
	We make the following assumption on the functions $f_i$ in~\eqref{problem}.
	\begin{assumption}
		\label{asp; strconvex Lipschitz}
		Each $f_i$ is $\mu$-strongly convex and its gradient is $L$-Lipschitz continuous, i.e., for any $x,x'\in\mathbb{R}^p$,
		\begin{equation*}
		\begin{split}
		& \langle \nabla f_i(x)-\nabla f_i(x'),x-x'\rangle\ge \mu\|x-x'\|_2^2,\\
		& \|\nabla f_i(x)-\nabla f_i(x')\|_2\le L \|x-x'\|_2.
		\end{split}
		\end{equation*}
	\end{assumption}
	Under Assumption \ref{asp; strconvex Lipschitz}, there exists a unique optimal 
	solution $x^*\in\mathbb{R}^{p}$ to problem~\eqref{problem}.	
	
	We use directed graphs to model the interaction topology among agents. A directed graph (digraph) is a pair $\mathcal{G}=(\mathcal{N},\mathcal{E})$, where $\mathcal{N}$ is the set of vertices (nodes) and the edge set $\mathcal{E}\subseteq \mathcal{N}\times \mathcal{N}$ consists of ordered pairs of vertices.	
	If there is a directed edge from node $i$ to node $j$ in $\mathcal{G}$, or $(i,j)\in\mathcal{E}$, then $i$ is defined as the parent node and $j$ is defined as the child node. Information can be transmitted from the parent node to the child node directly.
	A directed path in graph $\mathcal{G}$ is a sequence of edges $(i,j)$, $(j,k)$, $(k,l)\ldots$.
	Graph $\mathcal{G}$ is called strongly connected if there is a directed path between any pair of distinct vertices. 
	A directed tree is a digraph where every vertex, except for the root, has only one parent. 
	A spanning tree of a digraph is a directed tree that connects the root to all other vertices in the graph.
	A subgraph $\mathcal{S}$ of graph $\mathcal{G}$ is a graph whose set of vertices and set of edges are all subsets of $\mathcal{G}$ (see~\cite{godsil2013algebraic}).
	
	Given a nonnegative matrix\footnote{A matrix is nonnegative if all its elements are nonnegative.} $\mathbf{M}=[m_{ij}]\in\mathbb{R}^{n\times n}$, the digraph induced by the matrix $\mathbf{M}$ is 
	denoted by $\mathcal{G}_\mathbf{M}=(\mathcal{N},\mathcal{E}_\mathbf{M})$, where 
	$\mathcal{N}=\{1,2,\ldots,n\}$ and $(j,i)\in\mathcal{E}_\mathbf{M}$ iff (if and only if) $m_{ij}>0$. 
	We let $\mathcal{R}_\mathbf{M}$ be the set of roots of all possible spanning trees in the graph $\mathcal{G}_\mathbf{M}$. For an arbitrary agent $i\in\mathcal{N}$, we define its in-neighbor set $\inneighbor{\mathbf{M},i}$ as the collection of all individual agents that $i$ can actively and reliably pull data from; we also define its out-neighbor set $\outneighbor{\mathbf{M},i}$ as the collection of all individual agents that can passively and reliably receive data from $i$. In the situation when the set is time-varying, we further add a subscript to indicate it generates a sequence of sets. For example, $\inneighbor{\mathbf{M},i,k}$ is the in-neighbor set of $i$ at time/iteration $k$. 
	
	\section{A Push-Pull Gradient Method}
	\label{sec: Push-Pull}
	To proceed, we first illustrate and highlight the proposed algorithm, which we call Push-Pull in the following (Algorithm 1).
	
	\smallskip	
	\begin{table}
		\normalsize
		\centering
		{\textbf{Algorithm 1: Push-Pull}}
		
		\smallskip
		\begin{tabularx}{0.5\textwidth}{X}
			\hline
			Each agent $i$ chooses its local step size $\alpha_i\ge 0$,\\
			\quad in-bound mixing/pulling weights $R_{ij}\geq0$ for all $j\in\inneighbor{\mathbf{R},i}$,\\
			\quad and out-bound pushing weights $C_{li}\geq0$ for all $l\in\outneighbor{\mathbf{C},i}$;\\
			Each agent $i$ initializes with any arbitrary $x_{i,0}\in\R^p$ and $y_{i,0}=\nabla f_i(x_{i,0})$;\\
			\textbf{for} $k=0,1,\cdots$, \textbf{do} \\
			\qquad for each $i\in\mathcal{N}$,\\
			\qquad agent $i$ pulls $(x_{j,k}-\alpha_j y_{j,k})$ from each $j\in\inneighbor{\mathbf{R},i}$;\\
			\qquad agent $i$ pushes $C_{li}y_{i,k}$ to each $l\in\outneighbor{\mathbf{C},i}$;\\
			\qquad for each $i\in\mathcal{N}$,\\
			\qquad \quad $x_{i,k+1} =  \sum_{j=1}^n R_{ij}(x_{j,k}-\alpha_j y_{j,k})$;\\
			\qquad \quad $y_{i,k+1} =  \sum_{j=1}^n C_{ij}y_{j,k}+\nabla f_i(x_{i,k+1})-\nabla f_i(x_{i,k})$;\\
			\textbf{end for}\\
			\hline
		\end{tabularx}
	\end{table}
	\smallskip
	
	Algorithm 1 (Push-Pull) can be rewritten in the following aggregated form:
	\begin{subequations}\label{algorithm P-P}
		\begin{align}
		&\mx_{k+1} =  \mathbf{R}(\mx_k-\boldsymbol{\alpha} \my_k),\label{eq:x-update}\\
		&\my_{k+1} =  \mathbf{C}\my_k+\nabla F(\mx_{k+1})-\nabla F(\mx_k),\label{eq:y-update}
		\end{align}
	\end{subequations}
	where $\boldsymbol{\alpha}=\text{diag}\{\alpha_1,\alpha_2,\ldots,\alpha_n\}$ is a nonnegative diagonal matrix and $\mathbf{R}=[R_{ij}],\,\mathbf{C}=[C_{ij}]\in\mathbb{R}^{n\times n}$. We make the following assumption on the matrices $\mathbf{R}$ and $\mathbf{C}$. 
	\begin{assumption}
		\label{asp: stochastic}
		The matrix $\mathbf{R}\in\mathbb{R}^{n\times n}$ is nonnegative row-stochastic and $\mathbf{C}\in\mathbb{R}^{n\times n}$ is nonnegative column-stochastic, i.e., $\mathbf{R}\mathbf{1}=\mathbf{1}$ and $\mathbf{1}^{\T} \mathbf{C}=\mathbf{1}^{\T}$. In addition, the diagonal entries of $\mathbf{R}$ and $\mathbf{C}$ are positive, i.e., $R_{ii}>0$ and  $C_{ii}>0$ for all $i\in \mathcal{N}$.
	\end{assumption}
	As a result of $\mathbf{C}$ being column-stochastic,  we have by induction that
	\begin{equation}
	\label{oy_k}
	\frac{1}{n}\mathbf{1}^{\T}\my_k=\frac{1}{n}\mathbf{1}^{\T}\nabla F(\mx_{k}), \qquad \forall k.
	\end{equation}
	Relation (\ref{oy_k}) is critical for (a subset of) the agents to track the average gradient $\mathbf{1}^{\T}\nabla F(\mx_{k})/n$ through the $\my$-update.
	\begin{remark}
		At each iteration, each agent will ``push'' information about gradients to its out-neighbors and ``pull'' the decision variables from its in-neighbors, respectively.
		Each component of $\my_k$ plays the role of tracking the average gradient using the column-stochastic matrix $\mC$ while each component of $\mx_k$ performs optimization seeking by average consensus using a row-stochastic matrix $\mR$.
		The structure of Algorithm 1 resembles that of the gradient tracking methods as 
		in~\cite{nedic2017achieving,xu2015augmented} with the doubly stochastic matrix being split into a row-stochastic matrix and a column-stochastic matrix. 
		Such an asymmetric $\mathbf{R}$-$\mathbf{C}$ structure has already been used in the literature for achieving average 
		consensus~\cite{cai2012average}. 
		However, the proposed optimization algorithm can not be interpreted as a linear system since it introduces nonlinear dynamics due to the gradient terms.
	\end{remark}
	We now give the condition on the structures of  graphs $\mathcal{G}_\mathbf{R}$ and $\mathcal{G}_{\mathbf{C}^{\T}}$ induced by matrices $\mathbf{R}$ and $\mathbf{C}^{\T}$, respectively. Note that $\mathcal{G}_{\mathbf{C}^{\T}}$ is identical to the graph $\mathcal{G}_{\mathbf{C}}$ with all its edges reversed.
	\begin{assumption}
		\label{asp: nonempty root set}
		The graphs $\mathcal{G}_\mathbf{R}$ and $\mathcal{G}_{\mathbf{C}^{\T}}$ each contain at least one spanning tree. Moreover, there exists at least one node that is a root of spanning trees for both $\mathcal{G}_\mathbf{R}$ and $\mathcal{G}_{\mathbf{C}^{\T}}$, i.e., $\mathcal{R}_\mathbf{R}\cap\mathcal{R}_{\mathbf{C}^{\T}}\neq \emptyset$, where $\mathcal{R}_\mathbf{R}$ (resp., $\mathcal{R}_{\mathbf{C}^{\T}}$) is the set of roots of all possible spanning trees in the graph $\mathcal{G}_\mathbf{R}$ (resp., $\mathcal{G}_{\mathbf{C}^{\T}}$).
	\end{assumption}
	
	Assumption \ref{asp: nonempty root set} is weaker than requiring that both $\mathcal{G}_\mathbf{R}$ and $\mathcal{G}_{\mathbf{C}}$ are strongly connected, which was assumed in most previous works (e.g., \cite{nedic2017achieving,xi2018linear,xin2018linear}). This relaxation offers us more flexibility in designing graphs $\mathcal{G}_\mathbf{R}$ and $\mathcal{G}_{\mathbf{C}}$.
	For instance, suppose that we have a strongly connected communication graph $\mathcal{G}$. Then there are multiple ways to construct $\mathcal{G}_\mathbf{R}$ and $\mathcal{G}_{\mathbf{C}}$ satisfying Assumption \ref{asp: nonempty root set}. One trivial approach is to set $\mathcal{G}_\mathbf{R}=\mathcal{G}_{\mathbf{C}}=\mathcal{G}$. Another way is to pick at random $i_r\in\mathcal{N}$ and let $\mathcal{G}_\mathbf{R}$ (resp., $\mathcal{G}_{\mathbf{C}}$) be a spanning tree (resp., reversed spanning tree) contained in $\mathcal{G}$ with $i_r$ as its root.
	Once graphs $\mathcal{G}_\mathbf{R}$ and $\mathcal{G}_{\mathbf{C}}$ are established, matrices $\mathbf{R}$ and $\mathbf{C}$ can be designed accordingly. 
	\begin{remark} 
		There are different ways to design the weights in $\mR$ and $\mC$ by each agent locally.  For example, each agent $i$ may choose $R_{ij}=\frac{1}{|\inneighbor{\mathbf{R},i}|+c_R}$ for some constant $c_R>0$ for all $j\in \inneighbor{\mathbf{R},i}$ and let $R_{ii}=1-\sum_{j\in \inneighbor{\mathbf{R},i}}$. Similarly, agent $i$ can choose
		$C_{li}=\frac{1}{|\outneighbor{\mathbf{C},i}|+c_C}$ for some constant $c_C>0$ for all $l\in\outneighbor{\mathbf{C},i}$ and let $C_{ii}=1-\sum_{l\in \outneighbor{\mathbf{C},i}}$. Such a choice of mixing weights will render $\mR$ row-stochastic and $\mC$ column-stochastic, thus satisfying Assumption \ref{asp: stochastic}.
	\end{remark}
	
	We have the following result from Assumption~\ref{asp: stochastic} and Assumption~\ref{asp: nonempty root set}.
	\begin{lemma}
		\label{lem: eigenvectors u v}
		Under Assumption~\ref{asp: stochastic} and Assumption~\ref{asp: nonempty root set}, the matrix $\mathbf{R}$ has a unique nonnegative left eigenvector $u^{\T}$ (w.r.t.\ eigenvalue $1$) with $u^{\T}\mathbf{1}=n$, and the matrix $\mathbf{C}$ has a unique nonnegative right eigenvector $v$ (w.r.t. eigenvalue $1$) with $\mathbf{1}^{\T}v=n$ (see \cite{horn1990matrix}). Moreover, eigenvector $u^{\T}$ (resp., $v$) is nonzero only on the entries associated with agents $i\in\mathcal{R}_{\mathbf{R}}$ (resp., $j\in\mathcal{R}_{\mathbf{C}^{\T}}$), and $u^{\T}v>0$.
	\end{lemma}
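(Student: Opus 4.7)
The plan is to exploit the graph structure in Assumptions~\ref{asp: stochastic}--\ref{asp: nonempty root set} to put $\mathbf{R}$ in block lower-triangular form and then apply Perron--Frobenius to the ``root block.'' First, I would verify that $\mathcal{R}_\mathbf{R}$ is exactly one strongly connected component (SCC) of $\mathcal{G}_\mathbf{R}$ with no incoming edges from its complement: any two roots lie in the same SCC (each reaches the other through the spanning-tree property), every vertex in that SCC is itself a root, and an edge in $\mathcal{G}_\mathbf{R}$ from outside $\mathcal{R}_\mathbf{R}$ into it would force its tail to be a root as well, a contradiction. After permuting vertices so $\mathcal{R}_\mathbf{R}$ is indexed first, $\mathbf{R}$ takes the form
\[
\mathbf{R}=\begin{pmatrix}\mathbf{R}_{11} & \mathbf{0}\\ \mathbf{R}_{21} & \mathbf{R}_{22}\end{pmatrix},
\]
in which $\mathbf{R}_{11}$ is row-stochastic (no weight leaves $\mathcal{R}_\mathbf{R}$), irreducible, and has strictly positive diagonal by Assumption~\ref{asp: stochastic}.

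Perron--Frobenius applied to $\mathbf{R}_{11}$ then yields a simple eigenvalue $1$ with a strictly positive left eigenvector, unique up to scaling. For the full matrix $\mathbf{R}$, I would write any nonnegative left eigenvector as $u^{\T}=(u_1^{\T},u_2^{\T})$; the block structure forces $u_2^{\T}\mathbf{R}_{22}=u_2^{\T}$. To show $u_2=\mathbf{0}$, I would iterate and exploit
\[
u^{\T}\mathbf{1}_{\mathcal{R}_\mathbf{R}}=u^{\T}\mathbf{R}^k\mathbf{1}_{\mathcal{R}_\mathbf{R}}\qquad\text{for all }k\ge 0,
\]
together with the pointwise convergence $\mathbf{R}^k\mathbf{1}_{\mathcal{R}_\mathbf{R}}\to\mathbf{1}$; the latter holds because every non-root node is reachable in $\mathcal{G}_\mathbf{R}$ from some root, so in the Markov chain driven by $\mathbf{R}$ all probability mass eventually accumulates in the absorbing class $\mathcal{R}_\mathbf{R}$. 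Passing to the limit yields $u_2^{\T}\mathbf{1}=0$, hence $u_2=\mathbf{0}$ by nonnegativity, and normalization via $u^{\T}\mathbf{1}=n$ pins down $u^{\T}$ uniquely with $u_i>0$ exactly on $\mathcal{R}_\mathbf{R}$.

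The identical argument applied to $\mathbf{C}^{\T}$---which is row-stochastic with positive diagonal, and whose induced graph $\mathcal{G}_{\mathbf{C}^{\T}}$ satisfies the spanning-tree hypothesis of Assumption~\ref{asp: nonempty root set}---produces the right eigenvector $v$ of $\mathbf{C}$, strictly positive on $\mathcal{R}_{\mathbf{C}^{\T}}$, zero elsewhere, and normalized by $\mathbf{1}^{\T}v=n$. The inner product $u^{\T}v=\sum_i u_iv_i$ is then a sum of nonnegative terms, and by Assumption~\ref{asp: nonempty root set} there is at least one index $i\in\mathcal{R}_\mathbf{R}\cap\mathcal{R}_{\mathbf{C}^{\T}}$ contributing a strictly positive $u_iv_i$, so $u^{\T}v>0$. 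The main technical obstacle is the support claim for $u^{\T}$: Perron--Frobenius on $\mathbf{R}_{11}$ alone does not exclude a second nonnegative left eigenvector of $\mathbf{R}$ at eigenvalue $1$ partially supported on transient nodes, and the limit argument above (equivalently, a direct proof that $\rho(\mathbf{R}_{22})<1$) is what closes this gap.
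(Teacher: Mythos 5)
Your proposal is correct and follows essentially the same route as the paper's proof: reorder the vertices so that the root set $\mathcal{R}_\mathbf{R}$ (a closed strongly connected component) comes first, apply Perron--Frobenius to the irreducible row-stochastic root block, extend the resulting positive left eigenvector by zeros, argue identically for $\mathbf{C}^{\T}$, and deduce $u^{\T}v>0$ from the nonempty intersection $\mathcal{R}_\mathbf{R}\cap\mathcal{R}_{\mathbf{C}^{\T}}$. The only difference is that you explicitly close the uniqueness/support step --- ruling out nonnegative left eigenvectors partially supported on the transient block via the absorption limit $\mathbf{R}^k\mathbf{1}_{\mathcal{R}_\mathbf{R}}\to\mathbf{1}$ (equivalently, $\rho(\mathbf{R}_{22})<1$) --- whereas the paper simply asserts uniqueness ``from the Perron--Frobenius theorem''; your added argument is sound and fills in a detail the paper leaves implicit.
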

	\begin{proof}
		See Appendix \ref{subsec: proof eigenvectors u v}.
	\end{proof}
	
	Finally, we assume the following condition regarding the step sizes $\{\alpha_i\}$.
	\begin{assumption}
		\label{asp: step sizes}
		There is at least one agent $i\in\mathcal{R}_\mathbf{R}\cap\mathcal{R}_{\mathbf{C}^{\T}}$ whose step size $\alpha_i$ is positive.
	\end{assumption}
	
	Assumption \ref{asp: nonempty root set} and Assumption \ref{asp: step sizes} hint on the crucial role of the set $\mathcal{R}_\mathbf{R}\cap\mathcal{R}_{\mathbf{C}^{\T}}$. In what follows, we provide some intuition for the development of Push-Pull and an interpretation of the algorithm from another perspective. The discussions will shed light on the rationale behind the assumptions.
	
	To motivate the development of Push-Pull, let us consider the optimality condition for \eqref{problem} in the following form:
	\begin{subequations}\label{eq:opt_cond_consensus}
		\begin{align}
		&\mx^*\in\nul{\mathbf{I}-\mathbf{R}},\label{eq:oc_line1_consensus}\\
		&\mathbf{1}^{\T}\df(\mx^*)=0,\label{eq:oc_line2_consensus}
		\end{align}
	\end{subequations}
	where $\mathbf{x}^*:=\mathbf{1}{x^*}^{\T}$ and $\mathbf{R}$ satisfies Assumption \ref{asp: stochastic}.
	Consider the algorithm in~\eqref{algorithm P-P}. Suppose that the algorithm produces two sequences $\{\mx_k\}$ and $\{\my_k\}$ converging to some points $\mx_\infty$ and $\my_\infty$, respectively. Then from (\ref{eq:x-update}) and (\ref{eq:y-update}) we would have 
	\begin{subequations}\label{eq:converge_consensus}
		\begin{align}
		&(\mathbf{I}-\mathbf{R})(\mx_\infty-\boldsymbol{\alpha}\my_\infty) +\boldsymbol{\alpha}\my_\infty=0, \label{eq:converge_line1_consensus}\\
		&(\mathbf{I}-\mathbf{C})\my_\infty=0. \label{eq:converge_line2_consensus}
		\end{align}
	\end{subequations}
	If $\spa{\mathbf{I}-\mathbf{R}}$ and $\boldsymbol{\alpha}\cdot\nul{\mathbf{I}-\mathbf{C}}$ are disjoint\footnote{This is a consequence of Assumption \ref{asp: step sizes} and the relation $u^{\T}v>0$ from Lemma \ref{lem: eigenvectors u v}.}, from (\ref{eq:converge_consensus}) we would have $\mx_\infty\in\nul{\mathbf{I}-\mathbf{R}}$ and $\boldsymbol{\alpha}\my_\infty=\mathbf{0}$. Hence $\mx_\infty$ satisfies the optimality condition in~\eqref{eq:oc_line1_consensus}. 
	In light of (\ref{eq:converge_line2_consensus}), Assumption \ref{asp: step sizes}, and Lemma \ref{lem: eigenvectors u v}, we have $\my_\infty\in\nul{\boldsymbol{\alpha}}\cap\nul{\mathbf{I}-\mathbf{C}}=\{\mathbf{0}\}$.
	Then from (\ref{oy_k}) we know that $\mathbf{1}^{\T}\df(\mx_\infty)=\mathbf{1}^{\T}\my_\infty=\mathbf{0}$, which is exactly the optimality condition in~\eqref{eq:oc_line2_consensus}.
	
	For another interpretation of Push-Pull, notice that under Assumptions \ref{asp: stochastic} and \ref{asp: nonempty root set}, with linear rates of convergence,
	\begin{equation}\label{limit matrices}
	\lim_{k\rightarrow\infty}\mathbf{R}^k=\frac{\mathbf{1}u^{\T}}{n},\ \
	\lim_{k\rightarrow\infty}\mathbf{C}^k=\frac{v\mathbf{1}^{\T}}{n}.
	\end{equation}
	Thus with comparatively small step sizes, relation (\ref{limit matrices}) together with (\ref{oy_k}) implies that $\mx_k\simeq \mathbf{1}u^{\T}\mx_{k-K}/n$ (for some fixed $K>0$) and $\my_k\simeq v\mathbf{1}^{\T}\nabla F(\mx_k)/n$. From the proof of Lemma \ref{lem: eigenvectors u v}, eigenvector $u$ (resp., $v$) is nonzero only on the entries associated with agents $i\in\mathcal{R}_{\mathbf{R}}$ (resp., $j\in\mathcal{R}_{\mathbf{C}^{\T}}$). Hence $\mx_k\simeq \mathbf{1}u^{\T}\mx_{k-K}/n$ indicates that only the state information of agents $i\in\mathcal{R}_{\mathbf{R}}$ are pulled by the entire network, and $\my_k\simeq v\mathbf{1}^{\T}\nabla F(\mx_k)/n$ implies that only agents $j\in\mathcal{R}_{\mathbf{C}^{\T}}$ are pushed and tracking the average gradients. 
	This ``push" and ``pull" information structure gives the name of the algorithm.
	The assumption $\mathcal{R}_\mathbf{R}\cap\mathcal{R}_{\mathbf{C}^{\T}}\neq \emptyset$ essentially says at least one agent needs to be both ``pulled" and ``pushed''.

	The above discussion has mathematically interpreted why the use of row stochastic matrices and column stochastic matrices is reasonable. Now let us explain from the implementation aspect why Algorithm 1 is called ``Push-Pull'' and why it is more feasible to be implemented with ``Push'' and ``Pull'' at the same time.
	When the information across agents need to be diffused/fused, either an agent needs to know what scaling weights it needs to put on the quantities sending out to other agents, or it needs to know how to combine the quantities coming in with correct weights. In particular, we have the following specific weight assignment strategies:
	\begin{itemize}
		\item[A)] For the networked system to maintain $\sum_l C_{li}=1$, an apparently convenient way is to let agent $i$ scale its data by $C_{li},\forall l$, before sending/pushing out messages. In this way, it becomes agent $i$'s responsibility to synchronize out-neighbors' receptions of messages and it is natural to employ a reliable push-communication-protocol to implement such operations. 
		\item[B)] 
		Unlike what happens in A), to maintain $\sum_j R_{ij}=1$, the only seemingly feasible way is to let the receiver $i$ perform the tasks of scaling and combination/addition since it would be difficult for the sender to know the weights or adjust the weights accordingly, especially when the network changes. Thus, it is natural to employ a pull-communication-protocol for the above operations.
	\end{itemize}
	
	\subsection{Unifying Different Distributed Computational Architecture}\label{sec: Push-Pull_unify}
	We now demonstrate how the proposed algorithm (\ref{algorithm P-P}) unifies different types of distributed architecture, including decentralized, centralized, and semi-centralized  architecture.
	For the fully decentralized case, suppose we have a graph $\mathcal{G}$ that is undirected and connected. Then we can set $\mathcal{G}_\mathbf{R}=\mathcal{G}_{\mathbf{C}}=\mathcal{G}$ and let $\mathbf{R}=\mathbf{C}$ be symmetric matrices, in which case the proposed algorithm degrades to the one considered in~\cite{nedic2017achieving,xu2015augmented}; if the graph is directed and strongly connected, we can also let $\mathcal{G}_\mathbf{R}=\mathcal{G}_{\mathbf{C}}=\mathcal{G}$ and design the weights for $\mathbf{R}$ and $\mathbf{C}$ correspondingly.
	
	To illustrate the less straightforward situation of (semi)-centralized networks, let us give a simple example. 
	Consider a four-node star network composed by $\{1,2,3,4\}$ where node $1$ is situated at the center and nodes $2$, $3$, and $4$ are (bidirectionally) connected with node $1$ but not connected to each other. In this case, the matrix $\mathbf{R}$ in our algorithm can be chosen as 
	{\small\[\mathbf{R}=\left[
		\begin{array}{cccc}
		1 & 0 &0&0\cr
		0.5& 0.5 & 0 & 0\cr
		0.5&0&0.5&0\cr
		0.5&0&0&0.5
		\end{array}\right], \ \
		\mathbf{C}=\left[
		\begin{array}{cccc}
		1&0.5&0.5&0.5\cr
		0&0.5&0&0\cr
		0&0&0.5&0\cr
		0&0&0&0.5
		\end{array}\right].
		\]}
	For a graphical illustration, the corresponding network topologies of $\Gra_\mathbf{R}$ and $\Gra_\mathbf{C}$ are shown in Fig. \ref{fig:Toy}.
	\begin{figure}[h]
		\begin{center}
			\includegraphics[height=8em,clip = true, trim = 0in 0in 0in 0in]{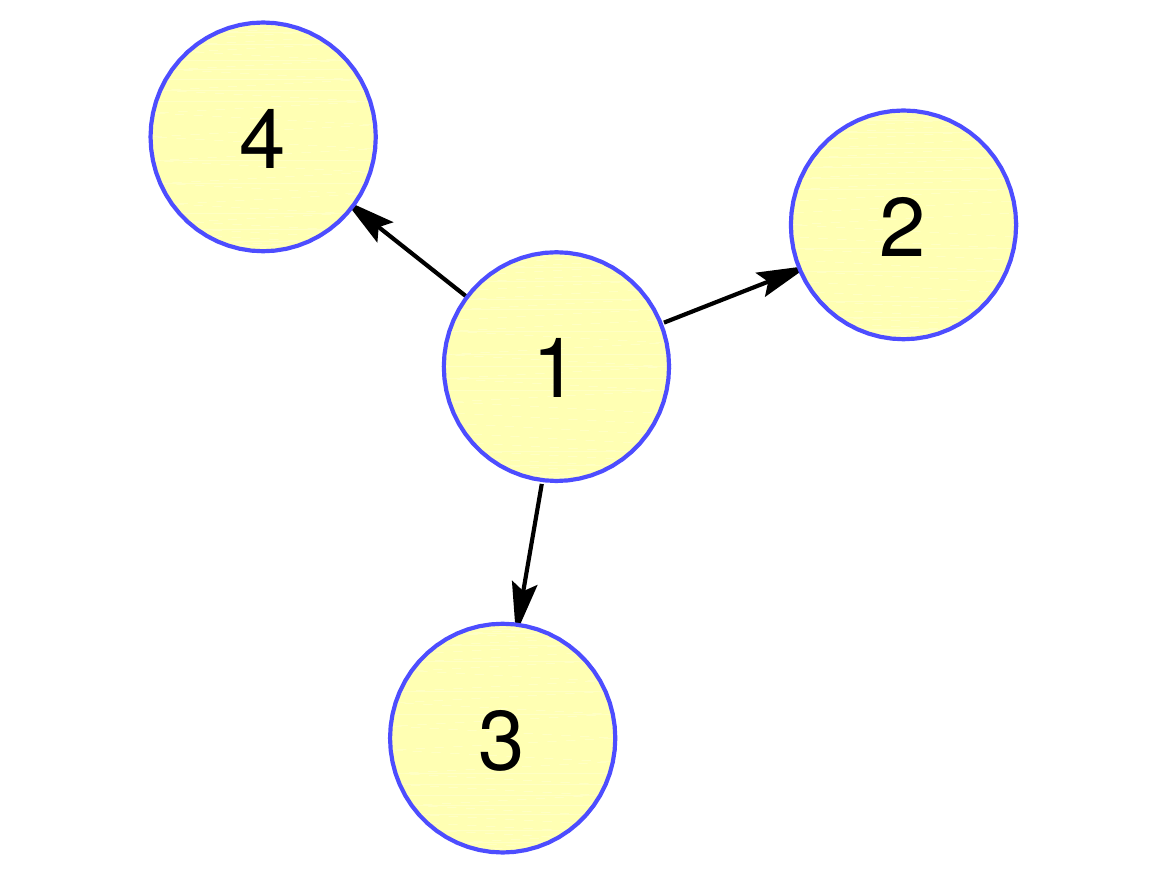}
			\includegraphics[height=8em,clip = true, trim = 0in 0in 0in 0in]{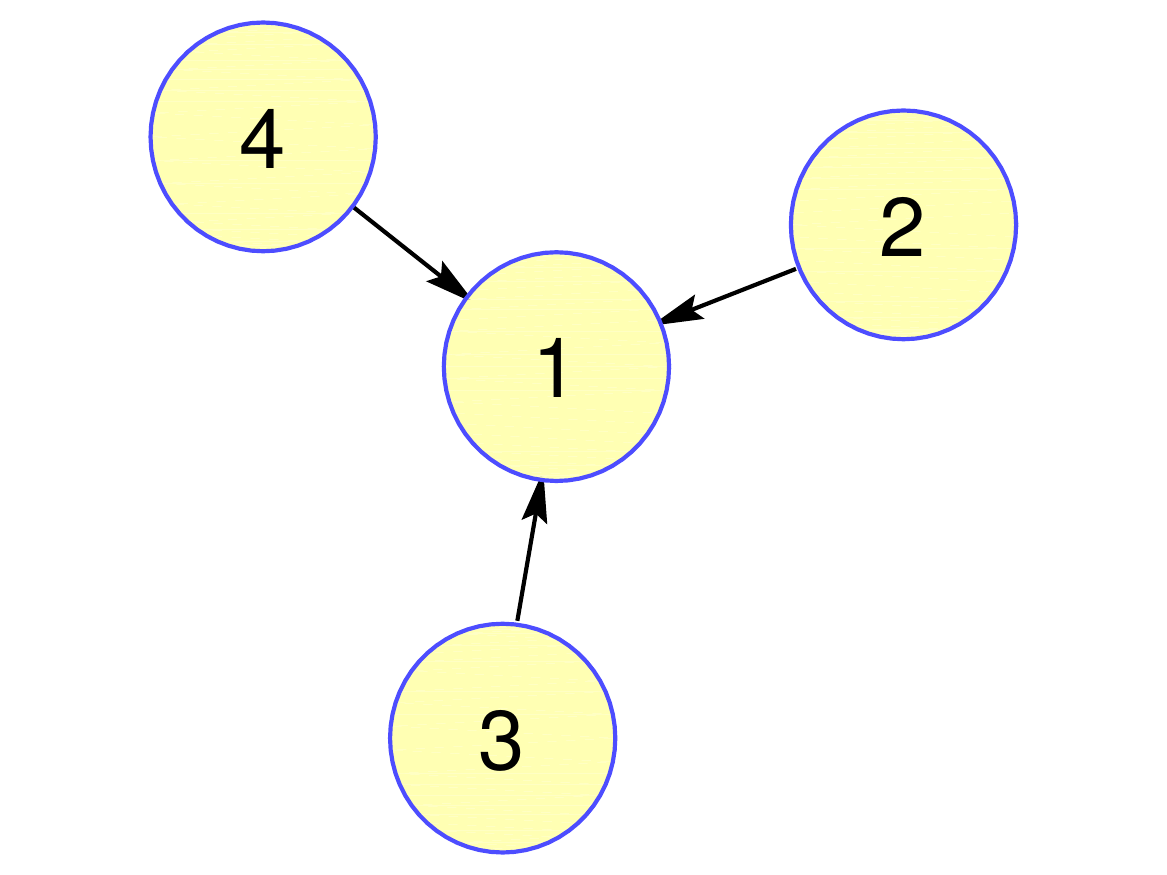}
			\caption{On the left is the graph $\Gra_\mathbf{R}$ and on the right is the graph $\Gra_\mathbf{C}$.}\label{fig:Toy}
		\end{center}
	\end{figure}
	The central node $1$'s information regarding $x_{1,k}$ is pulled by the neighbors (the entire network in this case) through $\Gra_\mathbf{R}$; the others only passively infuse the information from node $1$.
	At the same time, node $1$ has been pushed information regarding $y_{i,k}$ ($i=2,3,4$) from the neighbors
	through $\Gra_\mathbf{C}$; the other nodes only actively comply with the request from node $1$.
	This motivates the algorithm's name {\it push-pull gradient method}.
	Although nodes $2$, $3$, and $4$ are updating their $y_i$'s accordingly, these quantities do not have to contribute to the optimization procedure and will die out geometrically fast due to the weights in the last three rows of $\mathbf{C}$. Consequently, in this special case, the local stepsize $\alpha$ for agents $2$, $3$, and $4$ can be set to $0$. Without loss of generality, suppose $f_1(x)=0, \forall x$. Then the algorithm becomes a typical centralized algorithm for minimizing $\sum_{i=2}^4 f_i(x)$ where the master node $1$ utilizes the slave nodes $2$, $3$, and $4$ to compute the gradient information in a distributed way.
	
	
	Taking the above as an example for explaining the semi-centralized case, it is worth noting that node $1$ can be replaced by a strongly connected subnet in $\Gra_\mathbf{R}$ and $\Gra_\mathbf{C}$, respectively. 
	Correspondingly, nodes $2$, $3$, and $4$ can all be replaced by subnets as long as the information from the master layer in these subnets can be diffused to all the slave layer agents in $\Gra_\mathbf{R}$, while the information from all the slave layer agents can be diffused to the master layer in $\Gra_\mathbf{C}$.
	Specific requirements on connectivities of slave subnets can be understood by using the concept of rooted trees. We refer to the nodes as leaders if their roles in the network are similar to the role of node $1$; and the other nodes are termed as followers. Note that after the replacement of the individual nodes by subnets, the network structure in all subnets are decentralized, while the relationship between leader subnet and follower subnets is master-slave. This is why we refer to such an architecture as semi-centralized.
	\begin{remark}[A class of Push-Pull algorithms]\label{remark:ATC_variants}
		There can be multiple variants of the proposed algorithm depending on whether the Adapt-then-Combine (ATC) strategy \cite{Sayed2013} is used in the $\bx$-update and/or the $\by$-update (see Remark 3 in~\cite{nedic2017achieving} for more details). For readability, we only illustrate one algorithm in Algorithm 1 and call it Push-Pull in the above. We also generally use ``Push-Pull'' to refer to a class of algorithms regardless whether the ATC structure is used, if not causing confusion. Our forthcoming analysis can be adapted to these variants. Our numerical tests in Section \ref{sec: simulation} only involve some variants.
	\end{remark}
	
	\section{Convergence Analysis for Push-Pull}
	\label{sec: conv_analysis}
	In this section, we study the convergence properties of the proposed algorithm.
	We first define the following variables:
	\begin{eqnarray*}
		\ox_k  :=  \frac{1}{n}u^{\T} \mx_k,\qquad
		\oy_k  :=  \frac{1}{n}\mathbf{1}^{\T}\my_k.
	\end{eqnarray*}
	Our strategy is to bound $\|\ox_{k+1}-x^*\|_2$, $\|\mx_{k+1}-\mathbf{1}\ox_{k+1}\|_{\mrR}$ and $\|\my_{k+1}-v\oy_{k+1}\|_{\mrC}$ in terms of linear combinations of their previous values, where $\|\cdot\|_{\mrR}$ and $\|\cdot\|_{\mrC}$ are specific norms to be defined later. In this way we establish a linear system of inequalities which allows us to derive the convergence results. The proof technique was inspired by \cite{qu2017harnessing,xi2018linear}.
	
	Before diving into the detailed analysis, we present the main convergence result for the Push-Pull algorithm in~\eqref{algorithm P-P} in the following theorem.
	\begin{theorem}\label{theory:main}
		Suppose Assumptions \ref{asp; strconvex Lipschitz}-\ref{asp: nonempty root set} hold, $\alpha'\ge M\hat{\alpha}$ for some $M>0$ and
		\begin{equation}
		\label{bound_hat_alpha}
		\hat{\alpha} \le \min\left\{\frac{2c_3}{c_2+\sqrt{c_2^2+4c_1c_3}}, \frac{(1-\sigma_{\mrC})}{2\sigma_{\mrC}\delta_{\mrC,2}\|\mR\|_2 L}\right\},
		\end{equation}
		where  $c_1,c_2,c_3$ are given in (\ref{c1})-(\ref{c3}), and $\sigma_{\mrC}$ and $\delta_{\mrC,2}$ are defined in Lemma \ref{lem: special_norms} and Lemma \ref{lem: norm equivalence}, respectively.
		Then, the quantities $\|\ox_k-x^*\|_2$, $\|\mx_k-\mathbf{1}\ox_k\|_{\mrR}$ and $\|\my_k-v\oy_k\|_{\mrC}$ all converge to $0$ at the linear rate $\mathcal{O}(\rho(\mA)^k)$ 
		with $\rho(\mA)<1$, where $\rho(\mA)$ denotes the spectral radius of the matrix $\mA$ defined in (\ref{matrix_A}).
	\end{theorem}
	\begin{remark}
		Note that 
		\[\alpha'= \frac{1}{n}u^{\T}\balpha v=\sum_{i\in\mathcal{R}_\mathbf{R}\cap\mathcal{R}_{\mathbf{C}^{\T}}}\frac{1}{n}u_iv_i\alpha_i.\] 
		The condition $\alpha'\ge M\hat{\alpha}$ is automatically satisfied for a fixed $M$ in various situations. For example, if $\max_{i\in\mathcal{N}}\alpha_i=\max_{i\in\mathcal{R}_\mathbf{R}\cap\mathcal{R}_{\mathbf{C}^{\T}}}\alpha_i$ (which is always true when both $\mathcal{G}_{\mR}$ and $\mathcal{G}_{\mC^{\T}}$ are strongly connected), we can take $M=u_jv_j/n$ for $j=\arg\max_{i\in\mathcal{R}_\mathbf{R}\cap\mathcal{R}_{\mathbf{C}^{\T}}}\alpha_i$. For another example, if all $\alpha_i$ are equal, then $M=u^{\T}v/n$. 
		
		In general, the constant $M$ roughly measures the ratio of stepsizes used by agents in $\mathcal{R}_\mathbf{R}\cap\mathcal{R}_{\mathbf{C}^{\T}}$ and by all the agents. According to condition (\ref{bound_hat_alpha}) and definitions (\ref{c1})-(\ref{c3}), smaller $M$ leads to a tighter upper bound on the maximum stepsize $\hat{\alpha}$.
	\end{remark}
	\begin{remark}
		The upperbound on the stepsizes can be exactly calculated by (\ref{bound_hat_alpha}) assuming the weight matrices $\mathbf{R}$ and $\mathbf{C}$ are known. Note that since our proof technique is similar to those using the small-gain theorem (see e.g., \cite{qu2017harnessing,nedic2017achieving}), the upperbound obtained here may be conservative. Thus, it is still open how to develop new analytical tools to derive a tighter upperbound. However, it is worth mentioning that, in our numerical simulations, Push-Pull always allows for a very large region of stepsize choice compared to other distributed optimization algorithms applicable to directed graphs.
	\end{remark}
	\begin{remark}
		When $\hat{\alpha}$ is sufficiently small, it can be shown that $\rho(\mA)\simeq 1-\alpha'\mu$, in which case the Push-Pull algorithm is comparable to the centralized gradient descent method with stepsize $\alpha'$.\footnote{The proof of this statement is similar to that of Corollary 1 in \cite{pu2018distributed} and was omitted for conciseness of the paper.} Since $\alpha'=\sum_{i\in\mathcal{R}_\mathbf{R}\cap\mathcal{R}_{\mathbf{C}^{\T}}}\frac{1}{n}u_iv_i\alpha_i$, only the stepsizes of agents $i\in\mathcal{R}_\mathbf{R}\cap\mathcal{R}_{\mathbf{C}^{\T}}$ contribute to the convergence speed of Push-Pull. This corresponds to our discussion in Section \ref{sec: Push-Pull_unify}.
	\end{remark}
	
	\subsection{Preliminary Analysis}
	
	From the algorithm (\ref{algorithm P-P}) and Lemma \ref{lem: eigenvectors u v}, we have
	\begin{align}
	\label{ox_k+1 pre}
	\ox_{k+1}=\frac{1}{n}u^{\T} \mathbf{R}(\mx_k-\balpha \my_k)=\ox_k-\frac{1}{n}u^{\T}\balpha\my_k,
	\end{align}
	and
	\begin{multline}
	\label{oy_ave}
	\oy_{k+1}=  \frac{1}{n}\mathbf{1}^{\T} \left(\mathbf{C}\my_k+\nabla F(\mx_{k+1})-\nabla F(\mx_k)\right)\\
	= \oy_k+\frac{1}{n}\mathbf{1}^{\T}\left(\nabla F(\mx_{k+1})-\nabla F(\mx_k)\right).
	\end{multline}
	Let us further define $g_k:=\frac{1}{n}\mathbf{1}^{\T}\nabla F(\mathbf{1}\ox_k)$. Then, we obtain from relation (\ref{ox_k+1 pre}) that
	\begin{align}
	\label{ox pre}
	\ox_{k+1} & =\ox_k-\frac{1}{n}u^{\T}\balpha\left(\my_k-v\oy_k+v\oy_k\right)\notag\\
	& = \ox_k-\frac{1}{n}u^{\T}\balpha v\oy_k-\frac{1}{n}u^{\T}\balpha\left(\my_k-v\oy_k\right)\notag\\
	& = \ox_k-\alpha'g_k-\alpha'(\oy_k-g_k)-\frac{1}{n}u^{\T}\balpha\left(\my_k-v\oy_k\right),
	\end{align}
	where 
	\begin{equation}
	\label{alpha'}
	\alpha':=\frac{1}{n} u^{\T}\balpha v.
	\end{equation}
	We will show later that Assumptions \ref{asp: nonempty root set} and \ref{asp: step sizes} ensures $\alpha'>0$.
	
	In view of \eqref{algorithm P-P} and Lemma \ref{lem: eigenvectors u v}, using \eqref{ox_k+1 pre} we have
	\begin{multline}
	\label{mx-ox pre}
	\mx_{k+1}-\mathbf{1}\ox_{k+1}=\mathbf{R}(\mx_k-\balpha \my_k)-\mathbf{1}\ox_k+\frac{1}{n}\mathbf{1}u^{\T}\balpha\my_k\\
	=\mathbf{R}(\mx_k-\mathbf{1}\ox_k)-\left(\mathbf{R}-\frac{\mathbf{1}u^{\T}}{n}\right)\balpha\my_k\\
	=\left(\mathbf{R}-\frac{\mathbf{1}u^{\T}}{n}\right)(\mx_k-\mathbf{1}\ox_k)-\left(\mathbf{R}-\frac{\mathbf{1}u^{\T}}{n}\right)\balpha\my_k,
	\end{multline}
	and from \eqref{oy_ave} we obtain
	\begin{multline}
	\label{my-oy pre}
	\my_{k+1}-v\oy_{k+1}=\mathbf{C}\my_k-v\oy_k\\
	+\left(\mI-\frac{v\mathbf{1}^{\T}}{n}\right)\left(\nabla F(\mx_{k+1})-\nabla F(\mx_k)\right)\\
	=\left(\mathbf{C}-\frac{v\mathbf{1}^{\T}}{n}\right)(\my_k-v\oy_k)\\
	+\left(\mI-\frac{v\mathbf{1}^{\T}}{n}\right)\left(\nabla F(\mx_{k+1})-\nabla F(\mx_k)\right).
	\end{multline}
	
	\subsection{Supporting Lemmas}
	Before proceeding to prove the main result in Theorem \ref{theory:main}, we state a few useful lemmas.
	\begin{lemma}
		\label{lem: Lipschitz implications}
		Under Assumption \ref{asp; strconvex Lipschitz}, there holds
		\begin{equation*}
		\|\oy_k-g_k\|_2 \le \frac{L}{\sqrt{n}}\|\mx_k-\mathbf{1}\ox_k\|_2,\qquad
		\|g_k\|_2 \le L\|\ox_k-x^*\|_2.
		\end{equation*}
		In addition, when $\alpha'\le 2/(\mu+L)$, we have
		\begin{equation*}
		\|\ox_k-\alpha'g_k-x^*\|_2 \le (1-\alpha'\mu)\|\ox_k-x^*\|_2, \qquad \forall k.
		\end{equation*}
	\end{lemma}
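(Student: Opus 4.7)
My plan is to prove the three bounds in turn; each follows routinely once the defining expressions for $\oy_k$ and $g_k$ are unpacked.

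For the first inequality, I start from relation~(\ref{oy_k}), which (since $\mathbf{C}$ is column-stochastic) gives $\oy_k = \tfrac{1}{n}\mathbf{1}^{\T}\nabla F(\mx_k) = \tfrac{1}{n}\sum_{i=1}^n \nabla f_i(x_{i,k})$, while by definition $g_k = \tfrac{1}{n}\sum_{i=1}^n \nabla f_i(\ox_k)$. Subtracting, taking norms, applying the triangle inequality, and invoking the $L$-Lipschitz property of each $\nabla f_i$ yield $\|\oy_k - g_k\|_2 \le \tfrac{L}{n}\sum_i \|x_{i,k} - \ox_k\|_2$. A single Cauchy-Schwarz step, $\sum_i a_i \le \sqrt{n}\,(\sum_i a_i^2)^{1/2}$, then converts this sum into $\sqrt{n}\,\|\mx_k - \mathbf{1}\ox_k\|_2$ (the matrix norm being the Frobenius-type norm of Definition~\ref{def: norm n p}), producing the claimed factor $L/\sqrt{n}$.

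For the second inequality, the key fact is that the unique optimizer $x^*$ of~(\ref{problem}) satisfies $\sum_i \nabla f_i(x^*) = \mathbf{0}$, so $g_k = \tfrac{1}{n}\sum_i [\nabla f_i(\ox_k) - \nabla f_i(x^*)]$. Applying the triangle inequality and termwise Lipschitzness immediately yields $\|g_k\|_2 \le L\|\ox_k - x^*\|_2$.

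For the third inequality, I would recognize $g_k$ as the gradient of the averaged objective $h := \tfrac{1}{n}\sum_i f_i$ evaluated at $\ox_k$. Because each $f_i$ is $\mu$-strongly convex and $L$-smooth, $h$ inherits both properties (the $1/n$ scaling exactly cancels the $n$-fold sum), and $x^*$ is its unique minimizer with $\nabla h(x^*) = \mathbf{0}$. The claim then reduces to the classical contraction of a single gradient-descent step on a $\mu$-strongly convex, $L$-smooth function with step size $\alpha' \in (0, 2/(\mu+L)]$: $\|\ox_k - \alpha'\nabla h(\ox_k) - x^*\|_2 \le (1-\alpha'\mu)\|\ox_k - x^*\|_2$.

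The first two parts are essentially bookkeeping. The only mildly delicate point is the sharp constant $(1-\alpha'\mu)$ in the third bound---it is tight for quadratic $h$ but stronger than what a naive expansion of the squared norm using strong convexity and Lipschitz gradient produces. It is nevertheless a well-known contraction inequality for gradient descent (see, e.g.,~\cite{qu2017harnessing}), obtained most transparently by representing $\nabla h(\ox_k) - \nabla h(x^*)$ as an integrated Jacobian acting on $\ox_k - x^*$ and invoking the operator-norm bound $\|I - \alpha' A\|_2 \le 1-\alpha'\mu$ that holds for any symmetric $A$ with spectrum in $[\mu,L]$ whenever $\alpha' \le 2/(\mu+L)$. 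No further machinery is required.
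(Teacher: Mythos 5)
Your proof matches the paper's essentially line for line: the first two bounds are obtained exactly as in the appendix (unpacking $\oy_k$ and $g_k$ via (\ref{oy_k}), using $\sum_{i}\nabla f_i(x^*)=\mathbf{0}$, termwise Lipschitzness, and Cauchy--Schwarz to get the $L/\sqrt{n}$ factor), and for the third contraction bound the paper likewise simply cites \cite[Lemma~10]{qu2017harnessing}. The only minor caveat is that your integrated-Jacobian sketch tacitly assumes twice differentiability, which Assumption~\ref{asp; strconvex Lipschitz} does not grant; the standard co-coercivity argument underlying the cited lemma gives $(1-\alpha'\mu)$ for $\alpha'\le 2/(\mu+L)$ without it.
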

	\begin{proof}
		See Appendix \ref{subsec: proof lemma Lipschitz implications}.
	\end{proof}
	\begin{lemma}
		\label{lem: spectral radii}
		Suppose Assumptions \ref{asp: stochastic}-\ref{asp: nonempty root set} hold. Let
		$\rho_{\mR}$ and $\rho_{\mC}$ be the spectral radii of $(\mathbf{R}-\mathbf{1}u^{\T}/n)$ and $(\mathbf{C}-v\mathbf{1}^{\T}/n)$, respectively. Then, we have $\rho_{\mR}<1$ and $\rho_{\mC}<1$.
	\end{lemma}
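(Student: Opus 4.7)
The plan is to identify $P_R:=\mathbf{1}u^\T/n$ as the spectral projector of $\mathbf{R}$ for the eigenvalue $1$ and deduce the spectral-radius bound from the spectrum of $\mathbf{R}$. From Lemma~\ref{lem: eigenvectors u v}, $\mathbf{R}\mathbf{1}=\mathbf{1}$, $u^\T\mathbf{R}=u^\T$, and $u^\T\mathbf{1}=n$; these three relations give $P_R^2=P_R$ together with the commutation identities $\mathbf{R}P_R=P_R\mathbf{R}=P_R$. Induction then yields the algebraic identity $(\mathbf{R}-P_R)^k=\mathbf{R}^k-P_R$ for every $k\ge 1$, so proving $\rho_{\mR}<1$ is equivalent to showing that the eigenvalue $1$ of $\mathbf{R}$ is algebraically simple and every other eigenvalue of $\mathbf{R}$ lies in the open unit disk.

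For the ``no other eigenvalues on the unit circle'' claim, I would exploit the positive diagonal entries granted by Assumption~\ref{asp: stochastic}. Setting $d:=\min_i R_{ii}>0$ and writing $\mathbf{R}=dI+(1-d)\mathbf{R}'$ with $\mathbf{R}'$ nonnegative row-stochastic, any eigenvalue $\lambda$ of $\mathbf{R}$ corresponds to an eigenvalue $(\lambda-d)/(1-d)$ of $\mathbf{R}'$, so $|\lambda-d|\le 1-d$. Under the hypothesis $|\lambda|=1$, squaring this bound yields $\mathrm{Re}\,\lambda\ge 1$ and hence $\lambda=1$.

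For the algebraic simplicity of $1$, I would use the spanning-tree hypothesis (Assumption~\ref{asp: nonempty root set}) via a block decomposition. No vertex outside $\mathcal{R}_\mathbf{R}$ can have a root as an out-neighbor in $\mathcal{G}_\mathbf{R}$ (otherwise that vertex would inherit the root's reachability and itself become a root), so after permuting the roots to the front one obtains $\mathbf{R}=\left[\begin{smallmatrix}A&0\\ C&D\end{smallmatrix}\right]$, where $A$ is the restriction to $\mathcal{R}_\mathbf{R}$ and $D$ the restriction to the non-roots. The block $A$ is irreducible (paths between two roots cannot leave $\mathcal{R}_\mathbf{R}$ since the re-entry edges are blocked), aperiodic (positive diagonals), and row-stochastic, so Perron--Frobenius supplies $1$ as an algebraically simple eigenvalue of $A$ with all others in the open unit disk. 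It remains to prove $\rho(D)<1$: by the spanning-tree hypothesis, for each non-root $i$ there is a directed path from some $t\in\mathcal{R}_\mathbf{R}$ to $i$ of length at most $n-1$, and the positive diagonals let me pad all such paths to a common length $K\le n-1$, giving $(\mathbf{R}^K)_{i t(i)}>0$ for every non-root $i$; decomposing row sums via $\sum_{j\notin\mathcal{R}_\mathbf{R}}(D^K)_{ij}=1-\sum_{j\in\mathcal{R}_\mathbf{R}}(\mathbf{R}^K)_{ij}<1$ yields $\|D^K\|_\infty<1$, hence $\rho(D)<1$ by Gelfand's formula. Combining $\sigma(\mathbf{R})=\sigma(A)\cup\sigma(D)$ then makes $1$ a simple eigenvalue of $\mathbf{R}$.

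The conclusion $\rho_{\mC}<1$ follows by the transpose mirror: apply the entire argument to $\mathbf{C}^\T$, which under Assumptions~\ref{asp: stochastic}--\ref{asp: nonempty root set} is nonnegative row-stochastic with positive diagonal entries and whose induced digraph $\mathcal{G}_{\mathbf{C}^\T}$ contains a spanning tree. Since a matrix and its transpose share the same spectral radius, one obtains $\rho_{\mC}<1$. The main obstacle I anticipate is the mass-loss argument for $D$: because the spanning-tree hypothesis is weaker than strong connectivity, one must carefully trace a directed path from $\mathcal{R}_\mathbf{R}$ into each non-root vertex and use the positive diagonals to pad all such paths to a uniform length $K$; once this is done, the rest is algebraic bookkeeping chaining the projector identity with the two eigenvalue conclusions.
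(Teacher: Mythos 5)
Your proposal is correct, but it takes a noticeably different and more self-contained route than the paper. The paper's own proof simply imports the key spectral facts --- that under Assumptions~\ref{asp: stochastic}--\ref{asp: nonempty root set} the eigenvalue $1$ of $\mathbf{R}$ is algebraically simple and is the unique eigenvalue of modulus $1$ --- from \cite[Lemma 3.4]{ren2005consensus}, and then transfers them to $\mathbf{R}-\mathbf{1}u^{\T}/n$ by a left-eigenvector deflation argument: any left eigenvector $\tilde{u}^{\T}$ of $\mathbf{R}-\mathbf{1}u^{\T}/n$ for a nonzero eigenvalue $\lambda$ satisfies $\tilde{u}^{\T}\mathbf{1}=0$, is therefore also a left eigenvector of $\mathbf{R}$ for $\lambda$, and cannot be proportional to $u^{\T}$, so $|\lambda|<1$. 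You instead (i) re-derive the cited spectral facts from first principles --- the root/non-root block triangularization (the same decomposition the paper itself uses to prove Lemma~\ref{lem: eigenvectors u v}), Perron--Frobenius on the irreducible aperiodic root block $A$, the mass-leakage bound $\|D^{K}\|_\infty<1$ for the non-root block, and the lazy-chain splitting $\mathbf{R}=dI+(1-d)\mathbf{R}'$ to exclude other unit-modulus eigenvalues --- and (ii) transfer the spectral information through the projector identity $(\mathbf{R}-P_R)^k=\mathbf{R}^k-P_R$ rather than through eigenvector deflation. Both mechanisms in (ii) are standard and equivalent in effect; yours buys independence from the external reference and sidesteps two small loosenesses in the paper's write-up (the conclusion is stated as ``$\lambda<1$'' for a possibly complex $\lambda$, and the biorthogonality step tacitly assumes $\lambda\neq 0$), at the cost of considerably more length. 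The only point worth flagging is the degenerate case $d=\min_i R_{ii}=1$ (i.e., $\mathbf{R}=I$), where your splitting is undefined; it is vacuous here since $\mathcal{G}_{\mathbf{R}}$ would then have no spanning tree for $n\ge 2$, but a one-line remark excluding it would make the argument airtight.
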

	\begin{proof}
		See Appendix \ref{subsec: proof lemma spectral radii}.
	\end{proof}
	\begin{lemma}
		\label{lem: special_norms}
		There exist matrix norms $\|\cdot\|_{\mrR}$ and $\|\cdot\|_{\mrC}$ 
		such that $\sigma_{\mrR}:=\|\mathbf{R}-\frac{\mathbf{1}u^{\T}}{n}\|_{\mrR}<1$, $\sigma_{\mrC}:=\|\mathbf{C}-\frac{v\mathbf{1}^{\T}}{n}\|_{\mrC}<1$, and $\sigma_{\mrR}$ and $\sigma_{\mrC}$ are arbitrarily close to $\rho_{\mR}$ and $\rho_{\mC}$, respectively. In addition, given any diagonal matrix $\mW\in\mathbb{R}^{n\times n}$, we have $\|\mW\|_{\mrR}=\|\mW\|_{\mrC}=\|\mW\|_2$.
	\end{lemma}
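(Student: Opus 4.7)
The plan is to build each norm as an induced operator norm arising from a weighted vector $\infty$- or $1$-norm, so that the diagonal-matrix clause comes essentially for free.  Concretely, I would take $\|x\|_\mrR := \|D_\mrR x\|_\infty$ for a positive diagonal matrix $D_\mrR$ to be chosen, producing the induced matrix norm $\|M\|_\mrR = \|D_\mrR M D_\mrR^{-1}\|_\infty$; the analogous definition $\|x\|_\mrC := \|D_\mrC x\|_1$ yields $\|M\|_\mrC = \|D_\mrC M D_\mrC^{-1}\|_1$.  With this choice the diagonal-matrix identity is automatic: for any diagonal $\mW$, the conjugation $D_\mrR \mW D_\mrR^{-1}$ equals $\mW$ because diagonal matrices commute, and hence $\|\mW\|_\mrR = \|\mW\|_\infty = \max_i |W_{ii}| = \|\mW\|_2$, and similarly $\|\mW\|_\mrC = \|\mW\|_1 = \|\mW\|_2$.

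To make $\sigma_\mrR$ and $\sigma_\mrC$ arbitrarily close to $\rho_\mrR$ and $\rho_\mrC$, I will invoke the classical construction behind Horn and Johnson (\emph{Matrix Analysis}, Lemma~5.6.10): for any square matrix $A$ and any $\epsilon > 0$, one can produce a submultiplicative matrix norm whose value on $A$ lies within $\epsilon$ of $\rho(A)$.  The construction starts from a Schur (or Jordan) triangularization of $A$ and applies a geometric diagonal rescaling $\mathrm{diag}(1,\epsilon,\ldots,\epsilon^{n-1})$ that shrinks the super-diagonal entries of the triangular factor.  Applying this to $A_\mrR := \mathbf{R} - \mathbf{1}u^\T/n$ and $A_\mrC := \mathbf{C} - v\mathbf{1}^\T/n$, and combining with the bounds $\rho_\mrR, \rho_\mrC < 1$ from Lemma~\ref{lem: spectral radii}, would deliver $\sigma_\mrR, \sigma_\mrC < 1$.

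The hard part will be reconciling these two ingredients, because the canonical Horn--Johnson similarity is not diagonal: writing it as $T = Q D_\epsilon$ with $Q$ a unitary Schur basis, the induced norm $\|M\|' = \|T^{-1} M T\|_\infty$ fails in general to agree with $\|\cdot\|_2$ on diagonal matrices, as a direct computation with a non-trivial $Q$ quickly shows.  To finish, I would need to exhibit purely diagonal weights $D_\mrR, D_\mrC$ that still realize the spectral-radius estimate.  The route I expect to take is to exploit the rank-one-offset structure of $A_\mrR$ and $A_\mrC$: the spectral radius of each is attained on the invariant complement $\spa{\mathbf{1}}^\perp$ (respectively $\spa{v}^\perp$), and carefully combining the geometric diagonal scaling of the Jordan-form trick with a left/right eigenvector-based weighting of the standard basis (adapted to $u$ and $v$) should produce a diagonal similarity with the desired norm bound.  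Making this last combination rigorous, so that a single diagonal $D_\mrR$ (resp.\ $D_\mrC$) simultaneously brings $\|D_\mrR A_\mrR D_\mrR^{-1}\|_\infty$ (resp.\ $\|D_\mrC A_\mrC D_\mrC^{-1}\|_1$) arbitrarily close to $\rho_\mrR$ (resp.\ $\rho_\mrC$), is the key technical step of the proof.
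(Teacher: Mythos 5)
Your proposal correctly diagnoses the crux of this lemma, but it stops exactly where the proof has to start, so it has a genuine gap. For the record, the paper's own proof is a bare citation of \cite[Lemma 5.6.10]{horn1990matrix}: conjugating $\mR-\mathbf{1}u^{\T}/n$ into (near-)Jordan form and rescaling by $\dia{1,\epsilon,\ldots,\epsilon^{n-1}}$ yields a submultiplicative norm with $\sigma_{\mrR}\le\rho_{\mR}+O(\epsilon)$, which together with Lemma \ref{lem: spectral radii} gives the first two assertions; the paper offers no argument for the final claim $\|\mW\|_{\mrR}=\|\mW\|_{\mrC}=\|\mW\|_2$, and, exactly as you observe, that claim does not follow from the cited construction because the similarity involved is not diagonal. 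Your first paragraph is sound as far as it goes: with purely diagonal weights the identity on diagonal matrices is automatic, since conjugation by a diagonal matrix fixes $\mW$ and the induced $\ell_\infty$- and $\ell_1$-norms of a diagonal matrix both equal $\max_i|W_{ii}|=\|\mW\|_2$.

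The missing step, however, is not a routine technicality that a more careful choice of weights will supply: the two requirements are incompatible for general matrices, so no structure-free argument can close the gap. If a vector norm is compatible with its matrix norm in the sense of Lemma \ref{lem: matrix norm production} and satisfies $\|\dia{e_j}\|=1$ for every coordinate projection, then $\|a_{ij}e_i\|\le\|Ae_j\|$, hence $\|A\|\ge|a_{ij}|\,\|e_i\|/\|e_j\|$, and multiplying these bounds around any cycle $j_1\to j_2\to\cdots\to j_k\to j_1$ gives $\|A\|\ge\bigl(\prod_{\ell}|a_{j_{\ell+1}j_\ell}|\bigr)^{1/k}$, a quantity invariant under diagonal similarity. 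For $A=\bigl(\begin{smallmatrix}1&-1\\1&-1\end{smallmatrix}\bigr)$ the self-loop bound forces $\|A\|\ge1$ even though $\rho(A)=0$, so ``arbitrarily close to the spectral radius'' and ``agrees with $\|\cdot\|_2$ on diagonal matrices'' cannot coexist in general. A complete proof must therefore either exploit special structure of $\mR-\mathbf{1}u^{\T}/n$ and $\mC-v\mathbf{1}^{\T}/n$ (which neither your sketch nor the paper does), or weaken the statement: the diagonal-matrix clause is used only to bound $\|\balpha\my_k\|_{\mrR}$ in the proof of Lemma \ref{lem: important inequalities}, and the weaker estimate $\|\balpha\my_k\|_{\mrR}\le\delta_{\mrR,2}\,\hat{\alpha}\,\|\my_k\|_{\mrR}$, which follows from Lemma \ref{lem: norm equivalence} for the unmodified Horn--Johnson norms, suffices there at the cost of an extra constant.
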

	\begin{proof}
		See \cite[Lemma 5.6.10]{horn1990matrix}  and the discussions thereafter.
	\end{proof}
	In the rest of this paper, with a slight abuse of notation,
	we do not distinguish between the vector norms on $\mathbb{R}^n$ and their induced
	matrix norms.
	\begin{lemma}
		\label{lem: matrix norm production}
		Given an arbitrary norm $\|\cdot\|$, for any $\mW\in\mathbb{R}^{n\times n}$ and $\mx\in\mathbb{R}^{n\times p}$, we have $\|\mW\mx\|\le \|\mW\|\|\mx\|$. For any $w\in\mathbb{R}^{n\times 1}$ and $x\in\mathbb{R}^{1\times p}$, we have $\|wx\|=\|w\|\|x\|_2$.
	\end{lemma}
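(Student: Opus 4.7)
The plan is to prove both statements by unpacking Definition \ref{def: norm n p} column by column and reducing everything to standard vector-norm sub-multiplicativity together with elementary manipulations of the Euclidean 2-norm.

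For the first inequality, I would let $\mx^{(1)},\ldots,\mx^{(p)}\in\mathbb{R}^n$ denote the columns of $\mx$, and observe that the $j$-th column of $\mW\mx$ is simply $\mW\mx^{(j)}$. By sub-multiplicativity of the induced matrix norm $\|\cdot\|$ on $\mathbb{R}^n$, each column satisfies $\|\mW\mx^{(j)}\|\le\|\mW\|\,\|\mx^{(j)}\|$. Using Definition \ref{def: norm n p}, monotonicity of the $2$-norm under coordinate-wise domination of nonnegative vectors, and positive homogeneity, I would chain
\begin{equation*}
\|\mW\mx\|=\bigl\|[\|\mW\mx^{(1)}\|,\ldots,\|\mW\mx^{(p)}\|]\bigr\|_2\le\bigl\|[\|\mW\|\,\|\mx^{(1)}\|,\ldots,\|\mW\|\,\|\mx^{(p)}\|]\bigr\|_2=\|\mW\|\,\|\mx\|,
\end{equation*}
which is the claim.

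For the equality $\|wx\|=\|w\|\,\|x\|_2$, I would write $x=[x_1,\ldots,x_p]\in\mathbb{R}^{1\times p}$, so that the $j$-th column of the rank-one matrix $wx\in\mathbb{R}^{n\times p}$ equals $x_j w$. By absolute homogeneity of the vector norm, $\|x_j w\|=|x_j|\,\|w\|$ for each $j$. Invoking Definition \ref{def: norm n p} again and factoring out the scalar $\|w\|$ from the $2$-norm gives
\begin{equation*}
\|wx\|=\bigl\|[|x_1|\,\|w\|,\ldots,|x_p|\,\|w\|]\bigr\|_2=\|w\|\,\bigl\|[|x_1|,\ldots,|x_p|]\bigr\|_2=\|w\|\,\|x\|_2,
\end{equation*}
where the last equality uses that $\|x\|_2$ for a row vector coincides with the Euclidean norm of its entries.

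There is no real obstacle here: the statement is essentially a bookkeeping lemma that justifies treating the composite norm on $\mathbb{R}^{n\times p}$ as behaving like an ordinary operator/vector norm in the matrix bounds that will appear later. The only mild subtlety worth spelling out is why the $2$-norm on $\mathbb{R}^p$ is monotone with respect to coordinate-wise inequality of nonnegative vectors; this follows immediately since each nonnegative entry is squared before summing.
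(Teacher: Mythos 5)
Your proof is correct and follows essentially the same route as the paper's: both unpack Definition \ref{def: norm n p} column by column, apply the induced-norm bound $\|\mW\mx^{(j)}\|\le\|\mW\|\|\mx^{(j)}\|$ together with monotonicity and homogeneity of the $2$-norm for the inequality, and use $\|x_jw\|=|x_j|\|w\|$ for the rank-one equality. The only difference is that you explicitly justify the monotonicity step, which the paper leaves implicit.
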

	\begin{proof}
		See Appendix \ref{subsec: proof lemma matrix norm production}.
	\end{proof}
	\begin{lemma}
		\label{lem: norm equivalence}
		There exist constants $\delta_{\mrC,\mrR}, \delta_{\mrC,2}, \delta_{\mrR,\mrC}, \delta_{\mrR,2}>0$ such that for all $\mx\in\mathbb{R}^{n\times p}$, we have  $\|\mx\|_{\mrC}\le \delta_{\mrC,\mrR}\|\mx\|_{\mrR}$, $\|\mx\|_{\mrC}\le \delta_{\mrC,2}\|\mx\|_2$, $\|\mx\|_{\mrR}\le \delta_{\mrR,\mrC}\|\mx\|_{\mrC}$,  and $\|\mx\|_{\mrR}\le \delta_{\mrR,2}\|\mx\|_2$. In addition, with 
		a proper rescaling of the norms $\|\cdot\|_{\mrR}$ and $\|\cdot\|_{\mrC}$, we have
		$\|\mx\|_2\le \|\mx\|_{\mrR}$ and $\|\mx\|_2\le \|\mx\|_{\mrC}$ for all $\mx$.
	\end{lemma}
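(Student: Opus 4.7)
The statement is a routine norm-equivalence claim, so the proof will be short. The plan is to first establish the four inequalities as vector-norm equivalences on $\mathbb{R}^n$, then lift them to $\mathbb{R}^{n\times p}$ via Definition~\ref{def: norm n p}, and finally treat the rescaling assertion separately by noting that multiplying a vector norm by a positive constant does not change its induced operator norm.

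First I would recall that $\|\cdot\|_{\mrR}$ and $\|\cdot\|_{\mrC}$, as constructed in the previous lemma via \cite[Lemma 5.6.10]{horn1990matrix}, are bona fide vector norms on $\mathbb{R}^n$. Since $\mathbb{R}^n$ is finite-dimensional, any two norms on it are equivalent, so there exist positive constants $\delta_{\mrC,\mrR}, \delta_{\mrC,2}, \delta_{\mrR,\mrC}, \delta_{\mrR,2}$ with $\|v\|_{\mrC}\le \delta_{\mrC,\mrR}\|v\|_{\mrR}$, $\|v\|_{\mrC}\le \delta_{\mrC,2}\|v\|_2$, $\|v\|_{\mrR}\le \delta_{\mrR,\mrC}\|v\|_{\mrC}$, and $\|v\|_{\mrR}\le \delta_{\mrR,2}\|v\|_2$ for all $v\in\mathbb{R}^n$.

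Next I would lift these to $\mathbb{R}^{n\times p}$ via Definition~\ref{def: norm n p}. For any $\mx\in\mathbb{R}^{n\times p}$ with columns $\mx^{(1)},\ldots,\mx^{(p)}\in\mathbb{R}^n$, applying (say) $\|\mx^{(k)}\|_{\mrC}\le \delta_{\mrC,\mrR}\|\mx^{(k)}\|_{\mrR}$ column by column and using monotonicity of the 2-norm gives
\begin{equation*}
\|\mx\|_{\mrC}^2=\sum_{k=1}^p\|\mx^{(k)}\|_{\mrC}^2\le \delta_{\mrC,\mrR}^2\sum_{k=1}^p\|\mx^{(k)}\|_{\mrR}^2=\delta_{\mrC,\mrR}^2\|\mx\|_{\mrR}^2,
\end{equation*}
so $\|\mx\|_{\mrC}\le \delta_{\mrC,\mrR}\|\mx\|_{\mrR}$. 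The other three inequalities follow by the identical argument with the corresponding constants.

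For the rescaling statement, I would argue as follows. Since $\|\cdot\|_{\mrR}$ is equivalent to $\|\cdot\|_2$ on $\mathbb{R}^n$, there is $c_{\mrR}>0$ with $c_{\mrR}\|v\|_2\le \|v\|_{\mrR}$ for all $v\in\mathbb{R}^n$; replacing $\|\cdot\|_{\mrR}$ by $\|\cdot\|_{\mrR}/c_{\mrR}$ yields $\|v\|_2\le\|v\|_{\mrR}$. The key observation is that this rescaling does not affect the induced operator norm of any matrix, since $\sup_{v\ne 0}\|\mW v\|_{\mrR}/\|v\|_{\mrR}=\sup_{v\ne 0}(\|\mW v\|_{\mrR}/c_{\mrR})/(\|v\|_{\mrR}/c_{\mrR})$; in particular $\sigma_{\mrR}=\|\mathbf{R}-\mathbf{1}u^{\T}/n\|_{\mrR}<1$ from the preceding lemma is preserved, and the identity $\|\mW\|_{\mrR}=\|\mW\|_2$ for diagonal $\mW$ likewise survives a common rescaling. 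An identical rescaling of $\|\cdot\|_{\mrC}$ gives $\|v\|_2\le\|v\|_{\mrC}$. Lifting via Definition~\ref{def: norm n p} once more yields $\|\mx\|_2\le\|\mx\|_{\mrR}$ and $\|\mx\|_2\le\|\mx\|_{\mrC}$ on $\mathbb{R}^{n\times p}$, because $\|\mx\|_2^2=\sum_k\|\mx^{(k)}\|_2^2$ (using that the Frobenius-style aggregation in Definition~\ref{def: norm n p} with the base 2-norm recovers the Frobenius norm, which satisfies $\|\mx\|_{\mathrm{F}}\ge\|\mx\|_2$ after possibly absorbing a dimension-dependent factor into $c_{\mrR}$, $c_{\mrC}$).

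The only mildly delicate point is this last absorption: one must verify that the lifted inequality really reads $\|\mx\|_2\le\|\mx\|_{\mrR}$ with the spectral/operator 2-norm on the left, rather than the Frobenius norm; this is harmless since $\|\mx\|_2\le\|\mx\|_{\mathrm{F}}$ in general, and the constant can be folded into the rescaling of $\|\cdot\|_{\mrR}$ without disturbing $\sigma_{\mrR}<1$. I expect no genuine obstacle beyond this bookkeeping.
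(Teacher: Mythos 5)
Your proof is correct and follows essentially the same route as the paper, whose own proof is a one-line appeal to the equivalence of all norms on $\mathbb{R}^n$ together with the column-wise lifting of Definition~\ref{def: norm n p}. The only superfluous worry is your final paragraph: by Definition~\ref{def: norm n p} the symbol $\|\mx\|_2$ for $\mx\in\mathbb{R}^{n\times p}$ already denotes the Frobenius-type aggregate $\bigl(\sum_k\|\mx^{(k)}\|_2^2\bigr)^{1/2}$, not the spectral norm, so the lift is exact and no dimension-dependent factor needs to be absorbed.
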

	\begin{proof}
		The above result follows from the equivalence relation of all norms on $\mathbb{R}^n$ and Definition \ref{def: norm n p}.
	\end{proof}
	
	The following critical lemma establishes a linear system of inequalities that bound $\|\ox_{k+1}-x^*\|_2$, $\|\mx_{k+1}-\mathbf{1}\ox_k\|_{\mrR}$ and $\|\my_{k+1}-v\oy_k\|_{\mrC}$.
	\begin{lemma}
		\label{lem: important inequalities}
		Under Assumptions \ref{asp; strconvex Lipschitz}-\ref{asp: nonempty root set}, when $\alpha'\le 2/(\mu+L)$, we have the following linear system of inequalities:
		\begin{equation}
		\label{main ineqalities}
		\begin{array}{l}
		\begin{bmatrix}
		\|\ox_{k+1}-x^*\|_2\\
		\|\mx_{k+1}-\mathbf{1}\ox_{k+1}\|_{\mrR}\\
		\|\my_{k+1}-v\oy_{k+1}\|_{\mrC}
		\end{bmatrix}
		\le
		\mA
		\begin{bmatrix}
		\|\ox_k-x^*\|_2\\
		\|\mx_k-\mathbf{1}\ox_k\|_{\mrR}\\
		\|\my_k-v\oy_k\|_{\mrC}
		\end{bmatrix},
		\end{array}
		\end{equation}
		where the inequality is taken component-wise, and elements of the transition matrix $\mathbf{A}=[a_{ij}]$ are given by:
		\begin{align}
		\label{matrix_A}
		\hspace{-0.6em}\begin{array}{l}
		\begin{bmatrix}
		a_{11}\\
		a_{21}\\
		a_{31}
		\end{bmatrix} = 
		\begin{bmatrix}
		1-\alpha'\mu\\
		\hat{\alpha}\sigma_{\mrR}\|v\|_{\mrR} L\\
		\hat{\alpha}c_0 \delta_{\mrC,2}\|\mR\|_2\|v\|_2 L^2
		\end{bmatrix},\\
		\begin{bmatrix}
		a_{12}\\
		a_{22}\\
		a_{32}
		\end{bmatrix} =
		\begin{bmatrix}
		\frac{\alpha'L}{\sqrt{n}}\\
		\sigma_{\mrR}\left(1+\hat{\alpha}\|v\|_{\mrR} \frac{L}{\sqrt{n}}\right)\\
		c_0 \delta_{\mrC,2}L\left(\|\mR-\mI\|_2+\hat{\alpha}\|\mR\|_2\|v\|_2\frac{L}{\sqrt{n}}\right)
		\end{bmatrix},\\
		\begin{bmatrix}
		a_{13}\\
		a_{23}\\
		a_{33}
		\end{bmatrix} =
		\begin{bmatrix}
		\frac{\hat{\alpha}\|u\|_2}{n}\\
		\hat{\alpha}\sigma_{\mrR}\delta_{\mrR,\mrC}\\
		\sigma_{\mrC}+ \hat{\alpha}c_0\delta_{\mrC,2}\|\mR\|_2 L
		\end{bmatrix},
		\end{array}
		\end{align}
		where $\hat{\alpha}:=\max_i\alpha_i$ and $c_0:=\|\mI-v\mathbf{1}^{\T}/n\|_{\mrC}$.
	\end{lemma}
	\begin{proof}
		See Appendix \ref{proof: important inequalities}.
	\end{proof}
	
	In light of Lemma \ref{lem: important inequalities}, $\|\ox_k-x^*\|_2$, $\|\mx_k-\mathbf{1}\ox_k\|_{\mrR}$ and $\|\my_k-v\oy_k\|_{\mrC}$ all converge to $0$ linearly at rate $\mathcal{O}(\rho(\mA)^k)$ if the spectral radius of $\mA$ satisfies $\rho(\mA)<1$. 
	The next lemma provides some sufficient conditions for the relation $\rho(\mA)<1$ to hold.
	\begin{lemma}
		\label{lem: rho_M}
		\cite[Lemma 5]{pu2018distributed} Given a nonnegative, irreducible matrix $\mM=[m_{ij}]\in\mathbb{R}^{3\times 3}$ with $m_{ii}<\lambda^*$ for some $\lambda^*>0$ for all $i=1,2,3$. A necessary and sufficient condition for $\rho(\mM)<\lambda^*$ is $\mathrm{det}(\lambda^* \mathbf{I}-\mM)>0$.
	\end{lemma}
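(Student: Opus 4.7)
The plan is to study the characteristic polynomial $\phi(t) := \det(t\mI - \mM)$, factor it using the Perron--Frobenius theorem, and handle the cases of complex versus real non-Perron eigenvalues separately. Since $\mM$ is nonnegative and irreducible, Perron--Frobenius guarantees that $\lambda_1 := \rho(\mM)$ is a simple real eigenvalue, and every other eigenvalue $\lambda$ satisfies $|\lambda| \le \lambda_1$; in particular, any real eigenvalue $\lambda_i$ satisfies $\lambda_i \le \lambda_1$.

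For the forward direction, I factor $\phi(t) = (t - \lambda_1)(t - \lambda_2)(t - \lambda_3)$. If $\lambda_2, \lambda_3$ form a complex conjugate pair, then $(t - \lambda_2)(t - \lambda_3) = |t - \lambda_2|^2 > 0$ for any real $t$, so $\phi(\lambda^*) > 0$ follows from $\lambda^* - \lambda_1 > 0$. If $\lambda_2, \lambda_3$ are real, then each $\lambda_i \le \lambda_1 < \lambda^*$, so every factor is positive and again $\phi(\lambda^*) > 0$.

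For the reverse direction I suppose $\phi(\lambda^*) > 0$. The complex case is identical to the forward case and immediately forces $\lambda^* > \lambda_1$. In the all-real case, sign analysis of the monic cubic $\phi$ shows $\phi(t) > 0$ holds exactly on $(\lambda_1, \infty) \cup (\lambda_3, \lambda_2)$, where the strict ordering $\lambda_1 > \lambda_2 > \lambda_3$ comes from simplicity of $\lambda_1$. Suppose for contradiction $\lambda^* \le \lambda_1$; then $\lambda^* \in (\lambda_3, \lambda_2)$. Let $d := \max_i m_{ii}$, so $d < \lambda^*$ by hypothesis, and without loss of generality assume $m_{11} = d$. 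Then the first row of $d\mI - \mM$ is $(0, -m_{12}, -m_{13})$, and cofactor expansion along this row yields
\begin{equation*}
\phi(d) = -m_{12}m_{21}(d - m_{33}) - m_{12}m_{23}m_{31} - m_{13}m_{21}m_{32} - m_{13}m_{31}(d - m_{22}),
\end{equation*}
which is nonpositive term by term since every $m_{ij} \ge 0$ and $d - m_{jj} \ge 0$. Separately, I claim $d > \lambda_3$: if $\lambda_3 < 0$ this is immediate from $d \ge 0$; otherwise $\lambda_3 \ge 0$, and the trace identity $\lambda_1 + \lambda_2 + \lambda_3 = \sum_i m_{ii} \le 3d$ together with $\lambda_1, \lambda_2 > \lambda^* > d$ yields $\lambda_3 \le 3d - \lambda_1 - \lambda_2 < 3d - 2\lambda^* < d$. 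Hence $d \in (\lambda_3, \lambda_2)$, forcing $\phi(d) > 0$, which contradicts $\phi(d) \le 0$.

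The main obstacle is precisely the spurious positivity region $(\lambda_3, \lambda_2)$ of $\phi$ that can appear when $\mM$ has three real eigenvalues; neither Perron--Frobenius nor the trace identity alone excludes $\lambda^*$ from it. The hypothesis $m_{ii} < \lambda^*$ for every $i$ (equivalently, positive diagonal of $\lambda^*\mI - \mM$) is used exactly here: evaluating $\phi$ at $d = \max_i m_{ii}$ exploits the vanishing diagonal of $d\mI - \mM$ to force $\phi(d) \le 0$, which is incompatible with $d$ lying in the interior region where $\phi > 0$.
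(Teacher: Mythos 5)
The paper does not prove this lemma at all --- it simply cites \cite[Lemma 5]{pu2018distributed} --- so there is no in-paper argument to compare against; what you have supplied is a correct, self-contained proof. Your necessity direction is the standard one (pair complex conjugates, bound real eigenvalues by the Perron root), and your sufficiency direction correctly isolates the only genuine danger: when $\mM$ has three real eigenvalues $\lambda_3\le\lambda_2<\lambda_1$, the characteristic polynomial is also positive on $(\lambda_3,\lambda_2)$, and Perron--Frobenius alone cannot rule out $\lambda^*$ landing there. Your resolution --- evaluating $\phi$ at $d=\max_i m_{ii}$, using the zero diagonal entry of $d\mI-\mM$ to force $\phi(d)\le 0$ term by term, and the trace identity to place $d$ inside $(\lambda_3,\lambda_2)$ --- is clean and correctly pinpoints where the hypothesis $m_{ii}<\lambda^*$ enters; I verified the cofactor expansion and the sign of every term. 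One small imprecision: simplicity of the Perron root gives $\lambda_1>\lambda_2\ge\lambda_3$ but not the \emph{strict} inequality $\lambda_2>\lambda_3$ you assert; this is harmless, since if $\lambda_2=\lambda_3$ the interval $(\lambda_3,\lambda_2)$ is empty, $\phi(t)=(t-\lambda_1)(t-\lambda_2)^2$ is positive only for $t>\lambda_1$, and sufficiency follows immediately without the contradiction argument. You may wish to state that degenerate case explicitly rather than fold it into the claimed ``exact'' description of the positivity set.
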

	
	\subsection{Proof of Theorem \ref{theory:main}}
	In light of Lemma \ref{lem: rho_M}, it suffices to ensure $a_{11},a_{22},a_{33}<1$ and $\mathrm{det}(\mI-\mA)>0$, or
	\begin{align}
	\label{|I-A|>0}
	\begin{array}{l}
	\quad\mathrm{det}(\mI-\mA)=(1-a_{11})(1-a_{22})(1-a_{33})-a_{12}a_{23}a_{31}\\
	\quad-a_{13}a_{21}a_{32}-(1-a_{22})a_{13}a_{31}-(1-a_{11})a_{23}a_{32}\\
	\quad-(1-a_{33})a_{12}a_{21}\\
	= (1-a_{11})(1-a_{22})(1-a_{33})\\
	\quad-\alpha'\hat{\alpha}^2\sigma_{\mrR}c_0\delta_{\mrR,\mrC}\delta_{\mrC,2}\|\mR\|_2 \|v\|_2\frac{L^3}{\sqrt{n}}\\
	\quad-\hat{\alpha}^2\sigma_{\mrR}c_0\delta_{\mrC,2} \|u\|_2\|v\|_{\mrR}\left(\|\mR-\mI\|_2+\hat{\alpha}\|\mR\|_2 \|v\|_2\frac{L}{\sqrt{n}}\right)\frac{L^2}{n}\\
	\quad-\hat{\alpha}^2 c_0\delta_{\mrC,2}\|\mR\|_2 \|v\|_2\|u\|_2\frac{L^2}{n}(1-a_{22})\\
	\quad-\hat{\alpha}\sigma_{\mrR}c_0 \delta_{\mrR,\mrC}\delta_{\mrC,2}L\left(\|\mR-\mI\|_2+\hat{\alpha}\|\mR\|_2 \|v\|_2\frac{L}{\sqrt{n}}\right)(1-a_{11})\\
	\quad-\alpha'\hat{\alpha}\sigma_{\mrR} \|v\|_{\mrR}\frac{L^2}{\sqrt{n}}(1-a_{33})>0.
	\end{array}
	\end{align}
	We now provide some sufficient conditions under which $a_{11},a_{22},a_{33}<1$ and (\ref{|I-A|>0}) holds true.
	First, $a_{11}<1$ is ensured by choosing $\alpha'\le 2/(\mu+L)$. Suppose
	$1-a_{22} \ge (1-\sigma_{\mrR})/2$ and
	$1-a_{33} \ge (1-\sigma_{\mrC})/2$ under properly chosen stepsizes. These relations holds when
	\begin{align}
	\label{alpha loose condition}
	\hat{\alpha}\le \min\left\{\frac{(1-\sigma_{\mrR})\sqrt{n}}{2\sigma_{\mrR}\|v\|_{\mrR} L},\frac{(1-\sigma_{\mrC})}{2c_0\delta_{\mrC,2}\|\mR\|_2 L}\right\}.
	\end{align}
	
	Second, a sufficient condition for $\mathrm{det}(\mI-\mA)>0$ is to substitute $(1-a_{22})$ (resp., $(1-a_{33})$) in (\ref{|I-A|>0}) by $(1-\sigma_{\mrR})/2$ (resp., $(1-\sigma_{\mrC})/2$) and take $\alpha'=M\hat{\alpha}$. We then have $c_1\hat{\alpha}^2+c_2\hat{\alpha}-c_3<0$,
	where
	\begin{align}
	\label{c1}
	\begin{array}{l}
	\quad c_1 = M\sigma_{\mrR}c_0\delta_{\mrR,\mrC}\delta_{\mrC,2}\|\mR\|_2 \|v\|_2\frac{L^3}{\sqrt{n}}\\
	\quad+\sigma_{\mrR}c_0\delta_{\mrC,2} \|u\|_2\|v\|_{\mrR}\|\mR\|_2 \|v\|_2\frac{L^3}{n\sqrt{n}}\\
	\quad+M\mu\sigma_{\mrR}c_0 \delta_{\mrR,\mrC}\delta_{\mrC,2}\|\mR\|_2 \|v\|_2\frac{L^2}{\sqrt{n}}\\
	=\sigma_{\mrR}c_0\delta_{\mrC,2}\|\mR\|_2 \|v\|_2\frac{L^2}{n\sqrt{n}}\left[M\delta_{\mrR,\mrC}n(L+\mu)+\|u\|_2\|v\|_{\mrR} L\right],
	\end{array}
	\end{align}
	\begin{align}
	\label{c2}
	\begin{array}{l}
	c_2=\sigma_{\mrR}c_0\delta_{\mrC,2} \|u\|_2\|v\|_{\mrR}\|\mR-\mI\|_2\frac{L^2}{n}\\
	\qquad+c_0\delta_{\mrC,2}\|\mR\|_2 \|v\|_2\|u\|_2(1-\sigma_{\mrR})\frac{L^2}{2n}\\
	\qquad+M\sigma_{\mrR}c_0\delta_{\mrR,\mrC}\delta_{\mrC,2}\|\mR-\mI\|_2 \mu L\\
	\qquad+\frac{M}{2}\sigma_{\mrR} \|v\|_{\mrR}(1-\sigma_{\mrC})\frac{L^2}{\sqrt{n}},
	\end{array}
	\end{align}
	and 
	\begin{align}
	\label{c3}
	\begin{array}{l}
	c_3=\frac{M}{4}(1-\sigma_{\mrC})(1-\sigma_{\mrR})\mu.
	\end{array}
	\end{align}
	Hence
	\begin{align}
	\label{alpha strict condition}
	\hat{\alpha}\le \frac{2c_3}{c_2+\sqrt{c_2^2+4c_1c_3}}.
	\end{align}
	Relations (\ref{alpha loose condition}) and (\ref{alpha strict condition}), together with the fact that $\mathbf{A}$ is irreducible (a $3\times 3$ full matrix), yield the final bound on $\hat{\alpha}$.

	\section{A Gossip-Like Push-Pull Method (G-Push-Pull)}
	\label{sec: G-Push-Pull}
	
	In this section, we introduce a generalized random-gossip push-pull algorithm. We call it G-Push-Pull and outline it in the following (Algorithm 2)\footnote{In the algorithm description, the multiplication sign ``$\times$'' is added simply for avoiding visual confusion. It still represents the commonly recognized scalar-scalar or scalar-vector multiplication.}.
	
	\smallskip	
	\begin{table}
		\normalsize
		\centering
		{\textbf{Algorithm 2: G-Push-Pull}}
		
		\smallskip
		\begin{tabularx}{0.5\textwidth}{X}
			\hline
			Each agent $i$ chooses its local step size $\alpha_i\ge 0$;\\
			Each agent $i$ initializes with any arbitrary $x_{i,0}\in\R^p$ and $y_{i,0}=\nabla f_i(x_{i,0})$;\\
			\textbf{for} time slot $k=0,1,\cdots$ \textbf{do}\\
			\quad agent $i_k$ is uniformly randomly selected from $\mathcal{N}$;\\
			\quad agent $i_k$ uniformly randomly chooses the set $\outneighbor{\mathbf{R},i_k,k}$, a subset of its out-neighbors in $\Gra_\mathbf{R}$ at ``time'' $k$;\\
			\quad agent $i_k$ sends $x_{i_k,k}$ to all members in $\outneighbor{\mathbf{R},i_k,k}$;\\
			\quad every agent $j_k$ from $\outneighbor{\mathbf{R},i_k,k}$ generates $\gamma_{\mathbf{R},j_k,k}\in(0,1)$;\\
			\quad agent $i_k$ uniformly randomly chooses the set $\outneighbor{\mathbf{C},i_k,k}$, a subset of its out-neighbors in $\Gra_\mathbf{C}$ at ``time'' $k$;\\
			\quad agent $i_k$ sends $\gamma_{\mathbf{C},i_k,k}\times y_{i_k,k}$ to all members in $\outneighbor{\mathbf{C},i_k,k}$, where $\gamma_{\mathbf{C},i_k,k}$ is generated at agent $i_k$ such that $\gamma_{\mathbf{C},i_k,k}|\outneighbor{\mathbf{C},i_k,k}|<1$;\\
			\qquad$x_{i_k,k+1} = x_{i_k,k}-\alpha_{i_k} y_{i_k,k}$;\\
			\qquad \textbf{for} all $j_k\in\outneighbor{\mathbf{R},i_k,k}$ and $l_k\in\outneighbor{\mathbf{C},i_k,k}$ \textbf{do}\\
			\qquad \qquad \textbf{if} $j_k==l_k$\\
			\qquad $x_{j_k,k+1} = (1-\gamma_{\mathbf{R},j_k,k})x_{j_k,k}+\gamma_{\mathbf{R},j_k,k}~x_{i_k,k}-2\alpha_{j_k} y_{j_k,k}$;\\
			\qquad \qquad \textbf{else}\\
			\qquad  $x_{j_k,k+1} = (1-\gamma_{\mathbf{R},j_k,k})x_{j_k,k}+\gamma_{\mathbf{R},j_k,k}~x_{i_k,k}-\alpha_{j_k} y_{j_k,k}$;\\
			\qquad  $x_{l_k,k+1} = x_{l_k,k}-\alpha_{l_k} y_{l_k,k}$;\\	
			\qquad \qquad \textbf{end if}\\
			\qquad \textbf{end for}\\
			\qquad $y_{i_k,k+1} =  (1-\gamma_{\mathbf{C},i_k,k}|\outneighbor{\mathbf{C},i_k,k}|)y_{i_k,k}+\nabla f_{i_k}(x_{i_k,k+1})-\hfill~~~~~~~~~~~~~~~~~~~~\nabla f_{i_k}(x_{i_k,k})$;\\		
			\qquad \textbf{for} all $l_k\in\outneighbor{\mathbf{C},i_k,k}$ \textbf{do}\\
			\qquad $y_{l_k,k+1} = y_{l_k,k}+\gamma_{\mathbf{C},i_k,k}\times y_{i_k,k}+\nabla f_{l_k}(x_{l_k,k+1})-\hfill~~~~~~~~~~~~~~~~~~~~\nabla f_{l_k}(x_{l_k,k})$;\\
			\qquad \textbf{end for}\\
			\qquad \textbf{for} all $j_k\in\outneighbor{\mathbf{R},i_k,k}$ but $j_k\notin\outneighbor{\mathbf{C},i_k,k}$ \textbf{do}\\
			\qquad \qquad $y_{j_k,k+1} = y_{j_k,k}+\nabla f_{j_k}(x_{j_k,k+1})-\nabla f_{j_k}(x_{j_k,k})$;\\
			\qquad \textbf{end for}\\		
			\textbf{end for}\\
			\hline
		\end{tabularx}
	\end{table}
	\smallskip	
	
	Algorithm 2 illustrates the G-Push-Pull algorithm. At each ``time slot'' $k$, it is possible in practice that multiple agents (entities that are equivalent to the agent ``$i_k$'' employed in the algorithm) are activated/selected. This random selection process is done by placing a Poisson clock on each agent. Anytime when a node is awakened by itself or push-notified (or pull-alerted), it will be temporarily locked for the current paired update. We note that in this gossip version (Algorithm 2), only the push-communication-protocol is employed. Other possible variants that involve only the pull-communication-protocol or both protocols exist. For instance, to give a visual impression, for a 4-agent network connected as $1\rightarrow2\rightarrow3\rightarrow4\rightarrow 1$ and $1\rightarrow3$ (each arrow represents a unidirectional data link and this digraph is not balanced/regular)\footnote{For simplicity, we assume $\mathcal{G}_{\mrR}=\mathcal{G}_{\mrC}$ in this example.}, if we are to design a pull-only gossip algorithm and suppose agent $3$ is updating (pulling information from $1$ and $2$) at time $k$, the mixing matrices can be designed/implemented as
	{\small\[\mathbf{R}_k=\left[
		\begin{array}{cccc}
		1 & 0 &0&0\cr
		0& 1 & 0 & 0\cr
		1/3&1/3&1/3&0\cr
		0&0&0&1
		\end{array}\right], \ \
		\mathbf{C}_k=\left[
		\begin{array}{cccc}
		1/2&0&0&0\cr
		0&1/2&0&0\cr
		1/2&1/2&1&0\cr
		0&0&0&1
		\end{array}\right].
		\]}\normalsize
	From the third rows of $\mathbf{R}_k$ and $\mathbf{C}_k$, we can see that agent $3$ is aggregating the pulled information ($x_{1,k}$, $x_{2,k}$, $1/2\times y_{1,k}$, and $1/2\times y_{2,k}$); from the first and second column of $\mathbf{C}_k$, we can observe that agents $1$ and $2$ are ``sharing part of $y$'' and rescaling their own $y$. The gossip mechanism allows and is in favor of a push-only or pull-only network, which is different from what we require for the general static network carrying Algorithm 1 (see Section \ref{sec: Push-Pull}, the discussion right before Section \ref{sec: Push-Pull_unify}). Such difference is due to the fact that in gossip algorithms, at each ``iteration'' $k$, only one or multiple isolated trees with depth $1$ are activated, thus trivial weights assignment mechanisms exist in the $\mathbf{C}_k$ graph. For instance in the above example with a 4-agent network, the chosen $\mathbf{C}_k$ could be generated by letting the agents being pulled simply ``halve the $y$ variable before using it or sending it out''. This trick for gossip algorithms is difficult, if not impossible, to implement in a synchronized network with other general topologies.
	
	In the following, to make the convergence analysis concise, we further assume/restrict to the situation where $\alpha_i=\alpha>0$ for all $i\in\mathcal{N}$, $|\outneighbor{\mathbf{C},i_k,k}|\leq1$, $|\outneighbor{\mathbf{R},i_k,k}|\leq1$, and $\gamma_{\mathbf{C},i_k,k}=\gamma_{\mathbf{R},i_k,k}=\gamma$ for all $i_k\in\mathcal{N}$ and all $k=0,1,\ldots$. 
	With the simplification, we can represent the recursion of G-Push-Pull in a compact matrix form:
	\begin{subequations}\label{algorithm G-P-P}
		\begin{align}
		&\mx_{k+1} =  \mR_k\mx_k-\alpha\mQ_k \my_k,\label{eq:x-update GPP}\\
		&\my_{k+1} =  \mC_k\my_k+\nabla F(\mx_{k+1})-\nabla F(\mx_k),\label{eq:y-update GPP}
		\end{align}
	\end{subequations}
	where the matrices $\mR_k$ and $\mC_k$ are given by
	\begin{subequations}\label{R_k C_k}
		\begin{align}
		& \mR_k =  \mI+\gamma\left(e_{j_k}e_{i_k}^{\T}-e_{j_k}e_{j_k}^{\T}\right),\\
		& \mC_k =  \mI+\gamma\left(e_{l_k}e_{i_k}^{\T}-e_{i_k}e_{i_k}^{\T}\right),
		\end{align}
	\end{subequations}
	respectively. Here, $e_i=[0, \cdots, 0, 1, 0, \cdots]^{\T}\in\mathbb{R}^{n\times 1}$ is a unit vector with the $i$th component equal to $1$. Notice that each $\mR_k$ is row-stochastic, and each $\mC_k$ is column-stochastic. The random matrix variable $\mQ_k=\dia{e_{i_k}+e_{j_k}+e_{l_k}}$.
	
	\begin{remark}
		In practice, after receiving information from agent $i_k$ at step $k$, agents $j_k$ and $l_k$ can choose to perform their updates when they wake up in a future step.
	\end{remark}
	
	
	\section{Convergence Analysis for G-Push-Pull}
	\label{sec: conv_analysis_G-Push-Pull}
	
	Define $\bar{\mR}:=\bE[\mR_k]$ and $\bar{\mC}:=\bE[\mC_k]$. Denote by $\bar{u}^{\T}$ ($\bar{u}^{\T}\mathbf{1}=n$) the left eigenvector of $\bar{\mR}$ w.r.t. eigenvalue $1$, and let $\bar{v}$ ($\mathbf{1}^{\T}\bar{v}=n$) be the right eigenvector of $\bar{\mC}$ w.r.t. eigenvalue $1$. Let
	$\bar{x}_k=\frac{1}{n}\bar{u}^{\T}\mx_k$ and $\bar{y}_k=\frac{1}{n}\mathbf{1}^{\T}\my_k$ as before.
	Our strategy is to bound $\bE[\|\ox_{k+1}-x^*\|_2^2]$, $\bE[\|\mx_{k+1}-\mathbf{1}\ox_k\|_{\mrS}^2]$ and $\bE[\|\my_{k+1}-\bar{v}\oy_k\|_{\mrD}^2]$ in terms of linear combinations of their previous values, where $\|\cdot\|_{\mrS}$ and $\|\cdot\|_{\mrD}$ are norms to be specified later. Then based on the established linear system of inequalities, we prove the convergence of G-Push-Pull.
	
	We first state the main convergence result for G-Push-Pull in the following theorem.
	\begin{theorem}\label{theory:main_gpp}
		Suppose Assumptions \ref{asp; strconvex Lipschitz}-\ref{asp: nonempty root set} hold and
		\begin{subequations}\label{condition_gpp}
			\begin{align}
			& \gamma< \min\left\{\bar{\gamma}_{\mrR}, \bar{\gamma}_{\mrC}, \frac{\eta\mu\sqrt{(1-\sigma_{\bar{\mrC}})}}{8L\sqrt{d_4 d_7}}\right\}, \label{gamma_condition_gpp}\\
			& \alpha \le \min\left\{\frac{2c_6}{c_5+\sqrt{c_5^2+4c_4c_6}}, \right\},
			\end{align}
		\end{subequations}
		where the constant $\bar{\gamma}_{\mrR}>0$ is defined in Lemma \ref{lem: x_consensus gpp}, $\bar{\gamma}_{\mrC},\sigma_{\bar{\mrC}}>0$ are defined in Lemma \ref{lem: y_alignment gpp}, $\eta>0$ is given in Lemma \ref{lem: x-xstar_pre gpp}, $c_4$-$c_6$ are given in (\ref{c4})-(\ref{c6}), and $d_4,d_7>0$ are defined in (\ref{d1d7}).
		Then, the quantities $\bE[\|\ox_k-x^*\|_2^2]$, $\bE[\|\mx_k-\mathbf{1}\ox_k\|_{\mrS}^2]$ and $\bE[\|\my_k-v\oy_k\|_{\mrD}^2]$ all converge to $0$ at the linear rate $\mathcal{O}(\rho(\mB)^k)$, where $\rho(\mB)<1$ denotes the spectral radius of the matrix $\mB$ defined in (\ref{matrix_B}).
	\end{theorem}
	
	\subsection{Preliminaries}
	
	From (\ref{algorithm G-P-P}), we have the following recursive relations.
	\begin{multline*}
	\bar{x}_{k+1}=\frac{\bar{u}^{\T}}{n}\left(\mR_k\mx_k-\alpha\mQ_k \my_k\right)\\
	=\bar{x}_k-\alpha\frac{\bar{u}^{\T}}{n}\mQ_k \my_k+\frac{\bar{u}^{\T}}{n}\left(\mR_k-\bar{\mR}\right)\left(\mx_k-\frac{\mathbf{1}\bar{u}^{\T}}{n}\mx_k\right),
	\end{multline*}
	\begin{equation}\label{mx ox pre gpp}
	\mx_{k+1}-\frac{\mathbf{1}\bar{u}^{\T}}{n}\mx_{k+1}=\left(\mI-\frac{\mathbf{1}\bar{u}^{\T}}{n}\right)\mR_k\mx_k-\left(\mI-\frac{\mathbf{1}\bar{u}^{\T}}{n}\right)\alpha\mQ_k\my_k,
	\end{equation}
	and
	\begin{multline}\label{my oy pre gpp}
	\my_{k+1}-\frac{\bar{v} \mathbf{1}^{\T}}{n}\my_{k+1}=\left(\mI-\frac{\bar{v} \mathbf{1}^{\T}}{n}\right)\mC_k\my_k\\
	+\left(\mI-\frac{\bar{v} \mathbf{1}^{\T}}{n}\right)\left[\nabla F(\mx_{k+1})-\nabla F(\mx_k)\right].
	\end{multline}
	
	To derive a linear system of inequalities from the above equations, we first provide some useful facts about $\mR_k$ and $\mC_k$ as well as their expectations $\bar{\mR}$ and $\bar{\mC}$.
	Let
	\begin{align*}
	\mT_k := e_{j_k}e_{i_k}^{\T}-e_{j_k}e_{j_k}^{\T},\qquad
	\mE_k := e_{l_k}e_{i_k}^{\T}-e_{i_k}e_{i_k}^{\T}.
	\end{align*}
	Then from (\ref{R_k C_k}) we have $\mR_k=\mI+\gamma\mT_k$ and $\mC_k=\mI+\gamma\mE_k$.
	Define $\bar{\mT}=\bE[\mT_k]$ and $\bar{\mE}=\bE[\mE_k]$. We obtain
	\begin{align*}
	\bar{\mR}=\mI+\gamma\bar{\mT},\qquad
	\bar{\mC}=\mI+\gamma\bar{\mE}.
	\end{align*}
	Matrices $\bar{\mT}$ and $\bar{\mE}$ have the following algebraic property.
	\begin{lemma}
		\label{lem: eigenvalues_bar_T E}
		The matrix $\bar{\mT}$ (resp., $\bar{\mE}$) has a unique eigenvalue $0$; all the other eigenvalues lie in the unit circle centered at $(-1,0)\in\mathbb{C}^2$.
	\end{lemma}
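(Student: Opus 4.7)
The plan is to derive the spectral properties of $\bar{\mT}$ and $\bar{\mE}$ by reducing them to Perron–Frobenius statements about the related matrices $\mI+\bar{\mT}$ and $\mI+\bar{\mE}$. The key observation is that although $\gamma\in(0,1)$ in the algorithm, formally setting $\gamma=1$ in \eqref{R_k C_k} yields $\mI+\mT_k$ and $\mI+\mE_k$ as concrete deterministic matrices. Inspecting the definitions shows that $\mI+\mT_k$ is a $\{0,1\}$-valued row-stochastic matrix (row $j_k$ is $e_{i_k}^{\T}$; all other rows are coordinate vectors $e_l^{\T}$), and symmetrically $\mI+\mE_k$ is a $\{0,1\}$-valued column-stochastic matrix. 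Averaging over the gossip randomness preserves row-/column-stochasticity, so $\mI+\bar{\mT}$ is row-stochastic and $\mI+\bar{\mE}$ is column-stochastic.

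I would then apply the standard spectral bound for stochastic matrices (immediate from the Gershgorin disk theorem): every eigenvalue $\mu$ of $\mI+\bar{\mT}$, and of $\mI+\bar{\mE}$, satisfies $|\mu|\le 1$. Shifting by $-1$, every eigenvalue $\lambda$ of $\bar{\mT}$ (and of $\bar{\mE}$) satisfies $|\lambda+1|\le 1$, i.e., lies in the unit disk centered at $(-1,0)$. That $\lambda=0$ is in fact attained is inherited from $\bar{\mT}\mathbf{1}=0$ and $\mathbf{1}^{\T}\bar{\mE}=0$, which themselves follow from $\mR_k\mathbf{1}=\mathbf{1}$ and $\mathbf{1}^{\T}\mC_k=\mathbf{1}^{\T}$.

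The remaining task, which I expect to be the main obstacle, is to establish that the eigenvalue $0$ is simple (``unique''). Simplicity of $\lambda=0$ for $\bar{\mT}$ is equivalent to simplicity of $\mu=1$ for the row-stochastic matrix $\mI+\bar{\mT}$; the strictly positive diagonal immediately yields aperiodicity, so it remains to show a unique closed communicating class. I would verify the support identity $(\mI+\bar{\mT})_{ji}>0$ iff $j\in\outneighbor{\mathbf{R},i}$, so that transitions of the chain $\mI+\bar{\mT}$ traverse the reverse of $\Gra_{\mathbf{R}}$, and then invoke the generalized Perron–Frobenius theorem for nonnegative matrices whose digraph contains a spanning tree, using the spanning-tree hypothesis on $\Gra_{\mathbf{R}}$ (the same hypothesis needed to define $\bar{u}$ uniquely). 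The parallel argument applied to $\mI+\bar{\mE}$ and $\Gra_{\mathbf{C}^{\T}}$ handles $\bar{\mE}$. Combining the disk location with the simplicity of $0$ yields the lemma.
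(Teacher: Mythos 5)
Your proposal is correct and follows essentially the same route as the paper: both reduce the claim to the row-stochastic matrix $\mI+\bar{\mT}$ (resp.\ column-stochastic $\mI+\bar{\mE}$), locate its spectrum in the unit disk, and use the spanning-tree hypothesis on $\Gra_{\mathbf{R}}$ (resp.\ $\Gra_{\mathbf{C}^{\T}}$) to get that $1$ is a simple eigenvalue and the unique one of modulus $1$ (the paper cites Lemma 3.4 of the Ren--Beard consensus reference for this step), before shifting by $-\mI$. Your write-up merely fills in more of the routine details (explicit row-stochasticity of $\mI+\mT_k$, Gershgorin, aperiodicity from the positive diagonal).
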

	\begin{proof}
		Note that $\mI+\bar{\mT}$ is a nonnegative row-stochastic matrix corresponding to the graph $\Gra_{\mathbf{R}}$. It has spectral radius $1$, which is also the unique eigenvalue of modulus $1$ due to the existence of a spanning tree in the graph $\Gra_{\mathbf{R}}$~\cite[Lemma 3.4]{ren2005consensus}. Therefore, $0$ is a unique eigenvalue of $\bar{\mT}$, and all the other eigenvalues lie in the unit circle centered at $(-1,0)\in\mathbb{C}^2$.
		The argument for $\bar{\mE}$ is similar.
	\end{proof}
	
	Note that $\bar{u}^{\T}$ is the left eigenvector of $\bar{\mT}$ w.r.t. eigenvalue $0$, and $\mathbf{1}^{\T}$ is the left eigenvector of $\bar{\mE}$ w.r.t. eigenvalue $0$. We have the following result.
	\begin{lemma}
		\label{lem: TE decomposition}
		The matrix $\bar{\mT}$ can be decomposed as $\bar{\mT}=\mS^{-1}\mJ_{\mrT}\mS$, where $\mJ_{\mrT}\in \mathbb{C}^{n\times n}$ has $0$ on its top-left, and it differs from the Jordan form of $\bar{\mT}$ only on the superdiagonal entries\footnote{If $\bar{\mT}$ is diagonalizable, then $\mJ_{\mrT}$ is exactly the Jordan form of $\bar{\mT}$. The same relation applies to $\mJ_{\mrE}$ and $\bar{\mE}$ in the next paragraph.}. Square matrix $\mS$ has $\bar{u}^{\T}$ as its first row.
		The rows of $\mS$ are either left eigenvectors of $\bar{\mT}$, or generalized left eigenvectors of $\bar{\mT}$ (up to rescaling). In particular, the superdiagonal elements of $\mJ_{\mrT}$ can be made arbitrarily close to $0$ by proper choice of $\mS$. 
		
		The matrix $\bar{\mE}$ can be decomposed as $\bar{\mE}=\mD^{-1}\mJ_{\mrE}\mD$, where $\mJ_{\mrE}\in \mathbb{C}^{n\times n}$ has $0$ on its top-left, and it differs from the Jordan form of $\bar{\mE}$ only on the superdiagonal entries. Square matrix $\mD$ has $\mathbf{1}^{\T}$ as its first row. 
		The rows of $\mD$ are either left eigenvectors of $\bar{\mE}$, or generalized left eigenvectors of $\bar{\mE}$ (up to rescaling). In particular, the superdiagonal elements of $\mJ_{\mrE}$ can be made arbitrarily close to $0$ by proper choice of $\mD$. 
	\end{lemma}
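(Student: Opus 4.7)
My plan is to deduce the decomposition directly from the complex Jordan canonical form theorem applied to $\bar{\mT}$ (and analogously to $\bar{\mE}$), combined with the classical trick that lets one scale the superdiagonal entries of a Jordan form to be arbitrarily small. Lemma~\ref{lem: eigenvalues_bar_T E} already supplies the key structural input: $0$ is an eigenvalue of $\bar{\mT}$ of algebraic (and hence geometric) multiplicity one, while all remaining eigenvalues lie strictly inside the unit circle centered at $(-1,0)$.

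First, I would use the Jordan canonical form together with a permutation of Jordan blocks to write $\bar{\mT}=\mS_0^{-1}\mJ_0\mS_0$, where $\mJ_0$ is the Jordan form of $\bar{\mT}$ with the $1\times 1$ Jordan block corresponding to the simple eigenvalue $0$ placed in the top-left position. By construction, the first row of $\mS_0$ is a left eigenvector of $\bar{\mT}$ for eigenvalue $0$, and because the corresponding eigenspace is one-dimensional, this row must be a nonzero scalar multiple of $\bar{u}^{\T}$. A rescaling of the first row of $\mS_0$ (and the corresponding first column of $\mS_0^{-1}$) then replaces that row by $\bar{u}^{\T}$ exactly, while preserving the block-diagonal structure of $\mJ_0$; the remaining rows of the rescaled $\mS_0$ are, by definition of Jordan chains, either left eigenvectors or generalized left eigenvectors of $\bar{\mT}$.

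Second, I would apply the standard rescaling trick within each non-trivial block: for an $m\times m$ Jordan block with eigenvalue $\lambda$, replace the chain of generalized eigenvectors $(v_1,\ldots,v_m)$ by $(v_1,\varepsilon^{-1}v_2,\ldots,\varepsilon^{-(m-1)}v_m)$ for an arbitrary $\varepsilon>0$. This rescaling leaves the diagonal entries of the block unchanged but replaces the superdiagonal $1$'s by $\varepsilon$'s (this is essentially the content of Horn--Johnson, Lemma~5.6.10, already invoked earlier in the paper). Because the zero-eigenvalue block is $1\times 1$, the row $\bar{u}^{\T}$ is untouched by this procedure. The resulting matrices furnish the desired $\mS$ and $\mJ_{\mrT}$. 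The argument for $\bar{\mE}=\mD^{-1}\mJ_{\mrE}\mD$ is verbatim identical, with $\mathbf{1}^{\T}$ — the unique-up-to-scale left eigenvector of $\bar{\mE}$ for eigenvalue $0$ — playing the role of $\bar{u}^{\T}$.

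No step is genuinely delicate; the only thing to verify is that the three simultaneous requirements — that the $(1,1)$-entry of $\mJ_{\mrT}$ be $0$, that the first row of $\mS$ be exactly $\bar{u}^{\T}$, and that the superdiagonal entries of $\mJ_{\mrT}$ be made arbitrarily small — are mutually compatible. This compatibility follows immediately from the simplicity of the zero eigenvalue: the $1\times 1$ zero-block is insensitive to the rescaling within the other blocks, and the prescribed first row is forced by the one-dimensionality of the associated left eigenspace.
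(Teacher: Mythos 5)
Your proposal is correct and follows essentially the same route as the paper's proof: take the Jordan form of $\bar{\mT}$ with the simple zero eigenvalue's $1\times 1$ block in the top-left (so the first row of the similarity matrix is forced, up to scale, to be $\bar{u}^{\T}$), and then rescale the rows corresponding to generalized left eigenvectors to shrink the superdiagonal entries to an arbitrary $\varepsilon>0$. Your write-up merely makes explicit the $\varepsilon$-scaling of each Jordan chain and the compatibility check that the paper leaves implicit.
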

	\begin{proof}
		Since $\bar{u}^{\T}$ is the left eigenvector of $\bar{\mT}$ w.r.t. eigenvalue $0$, the Jordan form of $\bar{\mT}$ can be written as $\bar{\mT}=\tilde{\mS}^{-1}\tilde{\mJ}_{\mrT}\tilde{\mS}$, where $\tilde{\mJ}_{\mrT}$ has $0$ on its top-left, and $\bar{u}^{\T}$ is the first row in $\tilde{\mS}$ (see \cite{horn1990matrix}). If $\bar{\mT}$ is diagonalizable, then the matrix $\tilde{\mJ}_{\mrT}$ is diagonal, in which case we can take $\mJ_{\mrT}=\tilde{\mJ}_{\mrT}$ and $\mS=\tilde{\mS}$. If not, the matrix $\tilde{\mJ}_{\mrT}$ has superdiagonal elements equal to $1$. Then we let $\mS$ to be different from $\tilde{\mS}$ only in the rows corresponding to the generalized left eigenvectors of $\bar{\mT}$. By scaling down these rows, the superdiagonal elements of $\mJ_{\mrT}$ can be made arbitrarily close to $0$.
		The proof for $\bar{\mE}$ is similar.
	\end{proof}
	
	The following lemma is the final cornerstone we need to build our proof for the main results.
	\begin{lemma}
		\label{lem: Jordan form RC}
		Suppose $\bar{\mT}=\mS^{-1}\mJ_{\mrT}\mS$ and $\bar{\mE}=\mD^{-1}\mJ_{\mrE}\mD$ (as described in Lemma \ref{lem: TE decomposition}).
		Then the matrix $\bar{\mR}-\frac{\mathbf{1}\bar{u}^{\T}}{n}$ has decomposition
		\begin{equation*}
		\bar{\mR}-\frac{\mathbf{1}\bar{u}^{\T}}{n}=\mS^{-1}\mJ_{\mrR}\mS,
		\end{equation*}
		where $\mJ_{\mrR}=\mI-\dia{e_1}+\gamma\mJ_{\mrT}$,
		and the matrix $\bar{\mC}-\frac{\bar{v}\mathbf{1}^{\T}}{n}$ has decomposition
		\begin{equation*}
		\bar{\mC}-\frac{\bar{v}\mathbf{1}^{\T}}{n}=\mD^{-1}\mJ_{\mrC}\mD,
		\end{equation*}
		where $\mJ_{\mrC}=\mI-\dia{e_1}+\gamma\mJ_{\mrE}$. 
	\end{lemma}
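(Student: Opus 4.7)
The plan is to work in the similarity-coordinates given by $\mS$ (and later $\mD$), and show that the rank-one matrix $\mathbf{1}\bar{u}^{\T}/n$ conjugates to $\dia{e_1}$. Once this is established, the claim follows by a direct substitution: since $\bar{\mR}=\mI+\gamma\bar{\mT}$ and $\bar{\mT}=\mS^{-1}\mJ_{\mrT}\mS$, one has $\mS\bar{\mR}\mS^{-1}=\mI+\gamma\mJ_{\mrT}$, so the required formula $\mJ_{\mrR}=\mI-\dia{e_1}+\gamma\mJ_{\mrT}$ pops out by linearity.

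To prove $\mS(\mathbf{1}\bar{u}^{\T}/n)\mS^{-1}=\dia{e_1}$, I would pin down the two factors separately. Because the first row of $\mS$ is $\bar{u}^{\T}$ (by Lemma~\ref{lem: TE decomposition}), the identity $\mS\mS^{-1}=\mI$ immediately gives $\bar{u}^{\T}\mS^{-1}=e_1^{\T}$. For the other factor, note that $\bar{\mR}$ is row-stochastic (since each $\mR_k$ is), whence $\bar{\mT}\mathbf{1}=0$. By Lemma~\ref{lem: eigenvalues_bar_T E} the eigenvalue $0$ of $\bar{\mT}$ is simple, so the $(1,1)$ entry of $\mJ_{\mrT}$ corresponds to a $1\times 1$ Jordan block; combined with the stated form of $\mJ_{\mrT}$ (zero at the top-left and only the superdiagonal possibly differing from the Jordan form), this forces the first column of $\mJ_{\mrT}$ to be zero. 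Hence $\ker\mJ_{\mrT}=\spa{e_1}$, and transporting $\bar{\mT}\mathbf{1}=0$ through the similarity shows $\mS\mathbf{1}\in\ker\mJ_{\mrT}$, i.e., $\mS\mathbf{1}=c\,e_1$ for some scalar $c$. Reading off the first entry, $c=\bar{u}^{\T}\mathbf{1}=n$. Combining,
\begin{equation*}
\mS\!\left(\frac{\mathbf{1}\bar{u}^{\T}}{n}\right)\!\mS^{-1}=\frac{(\mS\mathbf{1})(\bar{u}^{\T}\mS^{-1})}{n}=\frac{(n\,e_1)\,e_1^{\T}}{n}=\dia{e_1},
\end{equation*}
as claimed.

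The statement for $\bar{\mC}-\bar{v}\mathbf{1}^{\T}/n$ is entirely symmetric with the roles of left and right eigenvectors swapped: $\mD$ has $\mathbf{1}^{\T}$ as its first row, yielding $\mathbf{1}^{\T}\mD^{-1}=e_1^{\T}$; column-stochasticity of $\bar{\mC}$ and simplicity of $0$ as an eigenvalue of $\bar{\mE}$ give $\ker\bar{\mE}=\spa{\bar{v}}$ and $\ker\mJ_{\mrE}=\spa{e_1}$, and the normalization $\mathbf{1}^{\T}\bar{v}=n$ fixes $\mD\bar{v}=n\,e_1$. The same calculation as above then produces $\mD(\bar{v}\mathbf{1}^{\T}/n)\mD^{-1}=\dia{e_1}$, so that $\bar{\mC}-\bar{v}\mathbf{1}^{\T}/n=\mD^{-1}(\mI-\dia{e_1}+\gamma\mJ_{\mrE})\mD$. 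The only nontrivial inputs are the pair of one-dimensional normalizations $\mS\mathbf{1}=n\,e_1$ and $\mD\bar{v}=n\,e_1$, both of which rest on the simplicity of the zero eigenvalue supplied by Lemma~\ref{lem: eigenvalues_bar_T E} under the spanning-tree assumption on $\mathcal{G}_{\mathbf{R}}$ and $\mathcal{G}_{\mathbf{C}^{\T}}$; no new estimates are needed.
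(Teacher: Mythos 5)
Your proof is correct and follows essentially the same route as the paper: both reduce the claim to showing $\mS(\mathbf{1}\bar{u}^{\T}/n)\mS^{-1}=\dia{e_1}$ (equivalently $\mS\mathbf{1}=n\,e_1$ together with $\bar{u}^{\T}\mS^{-1}=e_1^{\T}$) and then substitute into $\bar{\mR}=\mI+\gamma\bar{\mT}$. The only cosmetic difference is that you obtain $\mS\mathbf{1}=n\,e_1$ by transporting $\bar{\mT}\mathbf{1}=0$ into the kernel of $\mJ_{\mrT}$, whereas the paper invokes the orthogonality of $\mathbf{1}$ to the left (generalized) eigenvectors associated with nonzero eigenvalues; these are equivalent justifications of the same fact.
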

	\begin{proof}
		Recall that the rows of $\mS$ are left (generalized) eigenvectors of $\bar{\mT}$ (up to rescaling).
		Since $\mathbf{1}$ is the right eigenvector of $\bar{\mT}$ w.r.t eigenvalue $0$, it is orthogonal to the left (generalized) eigenvectors of $\bar{\mT}$ w.r.t eigenvalues other than $0$.
		Thus we have $\mS\frac{\mathbf{1}\bar{u}^{\T}}{n}=e_1\bar{u}^{\T}=\dia{e_1}\mS$.
		Therefore,
		\begin{multline*}
		\bar{\mR}-\frac{\mathbf{1}\bar{u}^{\T}}{n}=\mS^{-1}(\mI+\gamma\mJ_{\mrT})\mS-\mS^{-1}\dia{e_1}\mS\\
		=\mS^{-1}(\mI-\dia{e_1}+\gamma\mJ_{\mrT})\mS.
		\end{multline*}
		Similarly, we can prove the second relation.
	\end{proof}
	
	In the rest of this section, we assume that $\bar{\mT}=\mS^{-1}\mJ_{\mrT}\mS$ and $\bar{\mE}=\mD^{-1}\mJ_{\mrE}\mD$ for some fixed matrices $\mS$, $\mD$, $\mJ_{\mrT}$ and $\mJ_{\mrE}$ as described in Lemma \ref{lem: TE decomposition}. In particular, $\mJ_{\mrT}$ and $\mJ_{\mrE}$ are diagonal or close to diagonal.
	
	\subsection{Supporting Lemmas}
	Define norms $\|\cdot\|_{\mrS}$ and $\|\cdot\|_{\mrD}$ such that for all $\mx\in\mathbb{R}^n$, $\|\mx\|_{\mrS}:=\|\mS\mx\|_2$ and $\|\mx\|_{\mrD}:=\|\mD\mx\|_2$. Correspondingly, for any matrix $\mW\in\mathbb{R}^{n\times n}$, its matrix norms are given by $\|\mW\|_\mrS:=\|\mS^{-1}\mW\mS\|_2$ and $\|\mW\|_\mrD:=\|\mD^{-1}\mW\mD\|_2$, respectively.
	Denote 
	$\tilde{\mT}_k:=\mT_k-\bar{\mT}$, $\tilde{\mE}_k:=\mE_k-\bar{\mE}$, and $\tilde{\mR}_k:=\mR_k-\bar{\mR}=\gamma\tilde{\mT}_k$, $\tilde{\mC}_k:=\mC_k-\bar{\mC}=\gamma\tilde{\mE}_k$. 
	We have a few supporting lemmas.
	\begin{lemma}
		\label{lem: x_consensus gpp}
		Under Assumption \ref{asp: nonempty root set}, we have
		\begin{equation*}
		\bE\left[\left\|\left(\mI-\frac{\mathbf{1}\bar{u}^{\T}}{n}\right)\mR_k\mx_k\right\|_{\mrS}^2\mid \mx_k\right]
		\le \sigma_{\bar{\mrR}}\left\|\mx_k-\mathbf{1}\ox_k\right\|_{\mrS}^2,
		\end{equation*}
		where $\sigma_{\bar{\mrR}}:=\|\mJ_{\mrR}\|_2^2+\gamma^2\|\bar{\mV}_{\mrT}\|_2$ with
		\begin{equation*}
		\bar{\mV}_{\mrT}:=\bE\left[(\mS^{-1})^{\T}\tilde{\mT}_k^{\T}\left(\mI-\frac{\bar{u}\mathbf{1}^{\T}}{n}\right)\mS^{\T}\mS\left(\mI-\frac{\mathbf{1}\bar{u}^{\T}}{n}\right)\tilde{\mT}_k\mS^{-1}\right],
		\end{equation*}
		and
		\begin{equation*}
		\bE\left[\left\|\left(\mI-\frac{\mathbf{1}\bar{u}^{\T}}{n}\right)\mQ_k\my_k\right\|_{\mrS}^2\mid\mathcal{H}_k\right]
		\le \|\bar{\mV}_{\mrQ}\|_2\|\my_k\|_{\mrS}^2,
		\end{equation*}
		where 
		\begin{equation*}
		\bar{\mV}_{\mrQ}:=\bE\left[(\mS^{-1})^{\T}\mQ_k^{\T}\left(\mI-\frac{\mathbf{1}\bar{u}^{\T}}{n}\right)^{\T}\mS^{\T}\mS\left(\mI-\frac{\mathbf{1}\bar{u}^{\T}}{n}\right)\mQ_k\mS^{-1}\right].
		\end{equation*}
		In particular, there exist $\bar{\gamma}_{\mrR}>0$ such that for all $\gamma\in(0,\bar{\gamma}_{\mrR})$, we have $\sigma_{\bar{\mrR}}<1$.
	\end{lemma}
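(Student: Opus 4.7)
The plan is to decompose $\mR_k = \bar{\mR} + \gamma\tilde{\mT}_k$ and exploit that $\tilde{\mT}_k$ is zero mean. First, using $\mR_k\mathbf{1}=\mathbf{1}$, $\bar{u}^{\T}\bar{\mR}=\bar{u}^{\T}$, and $\tilde{\mT}_k\mathbf{1}=0$ (which follows directly from the formula for $\mT_k$), I would rewrite the quantity of interest as
\begin{equation*}
\left(\mI-\tfrac{\mathbf{1}\bar{u}^{\T}}{n}\right)\mR_k\mx_k \;=\; \left(\bar{\mR}-\tfrac{\mathbf{1}\bar{u}^{\T}}{n}\right)\tilde{\mx}_k \;+\; \gamma\left(\mI-\tfrac{\mathbf{1}\bar{u}^{\T}}{n}\right)\tilde{\mT}_k\tilde{\mx}_k,
\end{equation*}
where $\tilde{\mx}_k := \mx_k - \mathbf{1}\ox_k$. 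Call the first summand $A$ (deterministic given $\mx_k$) and the second $\gamma B_k$ (zero mean given $\mx_k$). Expanding $\|A+\gamma B_k\|_{\mrS}^2 = \trace((A+\gamma B_k)^{\T}\mS^{\T}\mS(A+\gamma B_k))$ and taking the conditional expectation kills the linear cross term, leaving $\bE[\|(\mI-\mathbf{1}\bar{u}^{\T}/n)\mR_k\mx_k\|_{\mrS}^2 \mid \mx_k] = \|A\|_{\mrS}^2 + \gamma^2\,\bE[\|B_k\|_{\mrS}^2 \mid \mx_k]$.

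Next I would bound each piece. Lemma \ref{lem: Jordan form RC} gives $\|A\|_{\mrS}^2 = \|\mJ_{\mrR}\mS\tilde{\mx}_k\|_F^2 \leq \|\mJ_{\mrR}\|_2^2\,\|\tilde{\mx}_k\|_{\mrS}^2$. For the stochastic term, writing the norm as a trace and pulling the expectation inside yields
\begin{equation*}
\bE[\|B_k\|_{\mrS}^2 \mid \mx_k] = \trace\!\left(\tilde{\mx}_k^{\T}\mS^{\T}\bar{\mV}_{\mrT}\mS\,\tilde{\mx}_k\right) \leq \|\bar{\mV}_{\mrT}\|_2\,\|\tilde{\mx}_k\|_{\mrS}^2,
\end{equation*}
after inserting $\mS^{-1}\mS=\mI$ inside the expectation to match the definition of $\bar{\mV}_{\mrT}$ and invoking the symmetric positive semidefiniteness of $\bar{\mV}_{\mrT}$ via the spectral bound for quadratic forms. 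Adding the two bounds gives the first inequality. The second inequality is obtained by exactly the same trace sandwich applied directly to $\mQ_k\my_k$; here no analogue of the $\mathbf{1}\ox_k$ subtraction is required, since the definition of $\bar{\mV}_{\mrQ}$ already absorbs $\mQ_k$ in full, and the spectral bound $\trace((\mS\my_k)^{\T}\bar{\mV}_{\mrQ}(\mS\my_k)) \leq \|\bar{\mV}_{\mrQ}\|_2\|\my_k\|_{\mrS}^2$ concludes.

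The main obstacle is the final claim $\sigma_{\bar{\mrR}}<1$ for sufficiently small $\gamma$. Since $\mJ_{\mrR} = \mI - \dia{e_1} + \gamma\mJ_{\mrT}$ is upper triangular with diagonal entries $0, 1+\gamma\lambda_2,\ldots,1+\gamma\lambda_n$, where the $\lambda_i$ are the nonzero eigenvalues of $\bar{\mT}$, Lemma \ref{lem: eigenvalues_bar_T E} places each $\lambda_i$ in the open disc $|z+1|<1$, which lies in the open left half plane; hence $\mathrm{Re}(\lambda_i)<0$ and $|1+\gamma\lambda_i|^2 \leq 1 - 2\gamma|\mathrm{Re}(\lambda_i)| + \gamma^2|\lambda_i|^2$. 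By Lemma \ref{lem: TE decomposition}, the superdiagonal entries of $\mJ_{\mrT}$ may be arranged to have 2-norm at most any prescribed $\epsilon>0$, so a triangle inequality applied to the diagonal and strictly upper triangular parts gives $\|\mJ_{\mrR}\|_2 \leq \max_i|1+\gamma\lambda_i| + \gamma\epsilon$, and hence $\|\mJ_{\mrR}\|_2^2 = 1 - c\gamma + O(\gamma^2)$ for some constant $c>0$ once $\epsilon$ is small enough. Because $\|\bar{\mV}_{\mrT}\|_2$ does not depend on $\gamma$, the additional $\gamma^2\|\bar{\mV}_{\mrT}\|_2$ is dominated, so $\sigma_{\bar{\mrR}}<1$ on some interval $(0,\bar{\gamma}_{\mrR})$.
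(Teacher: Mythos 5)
Your proposal is correct and follows essentially the same route as the paper: decompose $\mR_k=\bar{\mR}+\gamma\tilde{\mT}_k$, use the zero-mean property of $\tilde{\mT}_k$ (and $\tilde{\mT}_k\mathbf{1}=0$, $\bar{u}^{\T}\bar{\mR}=\bar{u}^{\T}$) to kill the cross term and reduce everything to $\mx_k-\mathbf{1}\ox_k$, then invoke Lemma \ref{lem: Jordan form RC} for the deterministic part and the trace/spectral bound with $\bar{\mV}_{\mrT}$ (resp.\ $\bar{\mV}_{\mrQ}$) for the stochastic part. Your treatment of the final claim $\sigma_{\bar{\mrR}}<1$ is in fact more explicit than the paper's one-line assertion that $\|\mJ_{\mrR}\|_2=1-\mathcal{O}(\gamma)$, but it rests on the same ingredients (Lemmas \ref{lem: eigenvalues_bar_T E} and \ref{lem: TE decomposition}) and is a valid elaboration.
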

	\begin{proof}
		By definition,
		\begin{align}
		\label{x_k+1-ux_k+1_pre}
		\begin{array}{l}
		\quad\left\|\left(\mI-\frac{\mathbf{1}\bar{u}^{\T}}{n}\right)\mR_k\mx_k\right\|_{\mrS}^2\\
		=\trace\left\{\left[\mS\left(\mI-\frac{\mathbf{1}\bar{u}^{\T}}{n}\right)\mR_k\mx_k\right]^{\T}\mS\left(\mI-\frac{\mathbf{1}\bar{u}^{\T}}{n}\right)\mR_k\mx_k\right\}\\
		=\trace\left\{\mx_k^{\T}\mR_k^{\T}\left(\mI-\frac{\bar{u}\mathbf{1}^{\T}}{n}\right)\mS^{\T}\mS\left(\mI-\frac{\mathbf{1}\bar{u}^{\T}}{n}\right)\mR_k\mx_k\right\}.
		\end{array}
		\end{align}
		In what follows, we omit the symbol $\trace\{\cdot\}$ to simplify notation; all matrices $\mW\in\mathbb{R}^{p\times p}$ refer to $\trace\{\mW\}$.
		The readers may conveniently assume $p=1$ in the following.
		
		Let $\mathcal{H}_k$ denote the history $\{\mx_1,\mx_2,\ldots,\mx_k,\my_k\}$. Taking conditional expectation on both sides of (\ref{x_k+1-ux_k+1_pre}), we have
		{\small\begin{align}
			\label{x_k+1-ux_k+1_expectation_pre}
			\begin{array}{l}
			\quad\bE\left[\left\|\left(\mI-\frac{\mathbf{1}\bar{u}^{\T}}{n}\right)\mR_k\mx_k\right\|_{\mrS}^2\mid\mathcal{H}_k\right]\\
			= \mx_k^{\T}\bar{\mR}^{\T}\left(\mI-\frac{\bar{u}\mathbf{1}^{\T}}{n}\right)\mS^{\T}\mS\left(\mI-\frac{\mathbf{1}\bar{u}^{\T}}{n}\right)\bar{\mR}\mx_k\\
			\quad+\bE\left[\mx_k^{\T}\tilde{\mR}_k^{\T}\left(\mI-\frac{\bar{u}\mathbf{1}^{\T}}{n}\right)\mS^{\T}\mS\left(\mI-\frac{\mathbf{1}\bar{u}^{\T}}{n}\right)\tilde{\mR}_k\mx_k\mid\mathcal{H}_k\right]\\
			= \left(\mx_k-\mathbf{1}\ox_k\right)^{\T}\left(\bar{\mR}-\frac{\mathbf{1}\bar{u}^{\T}}{n}\right)^{\T}\mS^{\T}\mS\left(\bar{\mR}-\frac{\mathbf{1}\bar{u}^{\T}}{n}\right)\left(\mx_k-\mathbf{1}\ox_k\right)\\
			\quad+\gamma^2\left(\mx_k-\mathbf{1}\ox_k\right)^{\T}\mS^{\T}\bE\left[(\mS^{-1})^{\T}\tilde{\mT}_k^{\T}\left(\mI-\frac{\bar{u}\mathbf{1}^{\T}}{n}\right)\mS^{\T}\right.\\
			\quad\left.\cdot\mS\left(\mI-\frac{\mathbf{1}\bar{u}^{\T}}{n}\right)\tilde{\mT}_k\mS^{-1}\right]\mS\left(\mx_k-\mathbf{1}\ox_k\right).
			\end{array}
			\end{align}}\normalsize
		In light of Lemma \ref{lem: Jordan form RC} (see \cite{horn1990matrix}),
		$\left\|\bar{\mR}-\frac{\mathbf{1}\bar{u}^{\T}}{n}\right\|_{\mrS}=\left\|\mS\left(\bar{\mR}-\frac{\mathbf{1}\bar{u}^{\T}}{n}\right)\mS^{-1}\right\|_2= \|\mJ_{\mrR}\|_2$.
		Then from (\ref{x_k+1-ux_k+1_expectation_pre}) and the definition of $\bar{\mT}$, we have
		\begin{align*}
		\begin{array}{l}
		\quad\bE\left[\left\|\left(\mI-\frac{\mathbf{1}\bar{u}^{\T}}{n}\right)\mR_k\mx_k\right\|_{\mrS}^2\mid\mathcal{H}_k\right]\\
		\le \left\|\bar{\mR}-\frac{\mathbf{1}\bar{u}^{\T}}{n}\right\|_{\mrS}^2\left\|\mx_k-\mathbf{1}\ox_k\right\|_{\mrS}^2
		+\gamma^2\|\bar{\mV}_{\mrT}\|_2\left\|\mx_k-\mathbf{1}\ox_k\right\|_{\mrS}^2\\
		= \left(\|\mJ_{\mrR}\|_2^2+\gamma^2\|\bar{\mV}_{\mrT}\|_2\right)\left\|\mx_k-\mathbf{1}\ox_k\right\|_{\mrS}^2.
		\end{array}
		\end{align*}
		
		The second relation follows from
		\begin{align*}
		\begin{array}{l}
		\quad\bE\left[\left\|\left(\mI-\frac{\mathbf{1}\bar{u}^{\T}}{n}\right)\mQ_k\my_k\right\|_{\mrS}^2\mid\mathcal{H}_k\right]\\
		=\bE\left[\left\|\my_k^{\T}\mS^{\T}(\mS^{-1})^{\T}\mQ_k^{\T}\left(\mI-\frac{\mathbf{1}\bar{u}^{\T}}{n}\right)^{\T}\mS^{\T}\mS\left(\mI-\frac{\mathbf{1}\bar{u}^{\T}}{n}\right)\right.\right.\\
		\quad\left.\left.\cdot\mQ_k\mS^{-1}\mS\my_k\right\|_2\mid\mathcal{H}_k\right]\le \|\bar{\mV}_{\mrQ}\|_2\|\my_k\|_{\mrS}^2.
		\end{array}
		\end{align*}
		
		Given the definition of $\mJ_{\mrR}$ and Lemma \ref{lem: eigenvalues_bar_T E}, we know $\|\mJ_{\mrR}\|=1-\mathcal{O}(\gamma)$. Hence when $\gamma$ is sufficiently small, we have $\sigma_{\bar{\mrR}}=\|\mJ_{\mrR}\|_2^2+\gamma^2\|\bar{\mV}_{\mrT}\|_2<1$.
	\end{proof}
	\begin{lemma}
		\label{lem: y_alignment gpp}
		Under Assumption \ref{asp: nonempty root set}, we have
		\begin{multline*}
		\bE\left[\left\|\left(\mI-\frac{\bar{v} \mathbf{1}^{\T}}{n}\right)\mC_k\my_k\right\|_{\mrD}^2\mid\mathcal{H}_k\right]
		\le \sigma_{\bar{\mrC}}\left\|\my_k-\bar{v}\oy_k\right\|_{\mrD}^2\\
		+2\gamma^2\|\bar{\mV}_{\mrE}\|_2\left\|\bar{v}\oy_k\right\|_{\mrD}^2,
		\end{multline*}
		where $\sigma_{\bar{\mrC}}:= \|\mJ_{\mrC}\|_2^2+2\gamma^2\|\bar{\mV}_{\mrE}\|_2$ with $\bar{\mV}_{\mrE} :=\bE\left[(\mD^{-1})^{\T}\tilde{\mE}_k^{\T}\mD^{\T}\mD\tilde{\mE}_k\mD^{-1}\right]$.
		In particular, there exist $\bar{\gamma}_{\mrC}>0$ such that for all $\gamma\in(0,\bar{\gamma}_{\mrC})$, we have $\sigma_{\bar{\mrC}}<1$.
	\end{lemma}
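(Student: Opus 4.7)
\textbf{Proof proposal for Lemma \ref{lem: y_alignment gpp}.} The plan is to mirror the proof of Lemma \ref{lem: x_consensus gpp}, exploiting the column-stochastic analogue of the projection algebra that worked for the row-stochastic $\mR_k$. First I would expand $\|(\mI-\bar{v}\mathbf{1}^{\T}/n)\mC_k\my_k\|_{\mrD}^2$ as a trace and condition on $\mathcal{H}_k$. Writing $\mC_k=\bar{\mC}+\gamma\tilde{\mE}_k$ and using $\bE[\tilde{\mE}_k\mid\mathcal{H}_k]=0$, the cross terms vanish and the conditional expectation splits into a deterministic part involving $\bar{\mC}$ and a variance part involving $\bE[\tilde{\mE}_k^{\T}(\cdot)\tilde{\mE}_k]$.

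For the deterministic part, the key identity is $\mathbf{1}^{\T}\bar{\mC}=\mathbf{1}^{\T}$, so $(\mI-\bar{v}\mathbf{1}^{\T}/n)\bar{\mC}=\bar{\mC}-\bar{v}\mathbf{1}^{\T}/n$, and since $\bar{\mC}\bar{v}=\bar{v}$ this operator annihilates $\bar{v}\oy_k$; hence acting on $\my_k$ is the same as acting on $\my_k-\bar{v}\oy_k$. Applying Lemma \ref{lem: Jordan form RC} to write $\bar{\mC}-\bar{v}\mathbf{1}^{\T}/n=\mD^{-1}\mJ_{\mrC}\mD$, the deterministic part is bounded by $\|\mJ_{\mrC}\|_2^{\,2}\,\|\my_k-\bar{v}\oy_k\|_{\mrD}^{2}$, in exact analogy with the $\mR_k$ calculation.

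For the variance part, the observation I want to exploit is that each $\mE_k=e_{l_k}e_{i_k}^{\T}-e_{i_k}e_{i_k}^{\T}$ has identically zero column sums, so $\mathbf{1}^{\T}\tilde{\mE}_k=0$ and therefore $(\mI-\bar{v}\mathbf{1}^{\T}/n)\tilde{\mE}_k=\tilde{\mE}_k$. This is what allows the projection to disappear from the definition of $\bar{\mV}_{\mrE}$ (unlike the $\mR_k$ case, where the projection persists inside $\bar{\mV}_{\mrT}$). Inserting $\mD^{-1}\mD$ appropriately, the variance term becomes $\gamma^2(\mD\my_k)^{\T}\bar{\mV}_{\mrE}(\mD\my_k)\le \gamma^2\|\bar{\mV}_{\mrE}\|_2\|\my_k\|_{\mrD}^{2}$. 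Splitting $\my_k=(\my_k-\bar{v}\oy_k)+\bar{v}\oy_k$ and applying $\|a+b\|_{\mrD}^{2}\le 2\|a\|_{\mrD}^{2}+2\|b\|_{\mrD}^{2}$ produces the two terms in the stated bound with the exact constants $\sigma_{\bar{\mrC}}=\|\mJ_{\mrC}\|_2^{\,2}+2\gamma^2\|\bar{\mV}_{\mrE}\|_2$ and $2\gamma^2\|\bar{\mV}_{\mrE}\|_2$.

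Finally, for the claim $\sigma_{\bar{\mrC}}<1$ for small $\gamma$: by Lemma \ref{lem: eigenvalues_bar_T E} all nonzero eigenvalues of $\bar{\mE}$ lie strictly inside the unit disk centered at $(-1,0)$, so the nonzero eigenvalues of $\mI+\gamma\bar{\mE}$ (equivalently, the nontrivial diagonal entries of $\mJ_{\mrC}=\mI-\dia{e_1}+\gamma\mJ_{\mrE}$) have modulus $1-\Theta(\gamma)$, and the off-diagonal entries of $\mJ_{\mrE}$ can be made arbitrarily small (Lemma \ref{lem: TE decomposition}). Hence $\|\mJ_{\mrC}\|_2^{\,2}=1-\Theta(\gamma)+O(\gamma^2)$, which dominates the $2\gamma^2\|\bar{\mV}_{\mrE}\|_2$ perturbation for all sufficiently small $\gamma$, giving the existence of $\bar{\gamma}_{\mrC}>0$. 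The only subtle step I anticipate is the column-sum identity $\mathbf{1}^{\T}\tilde{\mE}_k=0$; once that is in hand, the rest is a direct transcription of the $\mR_k$ argument.
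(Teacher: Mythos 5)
Your proposal is correct and follows essentially the same route as the paper: split $\mC_k=\bar{\mC}+\gamma\tilde{\mE}_k$ so the cross terms vanish in conditional expectation, note that $\bar{\mC}-\bar v\mathbf{1}^{\T}/n$ annihilates $\bar v\oy_k$ and has $\mrD$-norm $\|\mJ_{\mrC}\|_2$ by Lemma \ref{lem: Jordan form RC}, use $\mathbf{1}^{\T}\tilde{\mE}_k=0$ to drop the projection in the variance term, and finish with $\|\my_k\|_{\mrD}^2\le 2\|\my_k-\bar v\oy_k\|_{\mrD}^2+2\|\bar v\oy_k\|_{\mrD}^2$. The column-sum identity you flag as the "subtle step" is exactly the fact the paper uses (implicitly) in the equality $\bE[\tilde{\mC}_k^{\T}(\mI-\mathbf{1}\bar v^{\T}/n)\mD^{\T}\mD(\mI-\bar v\mathbf{1}^{\T}/n)\tilde{\mC}_k]=\bE[\tilde{\mC}_k^{\T}\mD^{\T}\mD\tilde{\mC}_k]$, so no gap remains.
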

	\begin{proof}
		Note that
		\begin{align*}
		\begin{array}{l}
		\quad\left\|\left(\mI-\frac{\bar{v} \mathbf{1}^{\T}}{n}\right)\mC_k\my_k\right\|_{\mrD}^2\\
		=\left[\mD\left(\mI-\frac{\bar{v} \mathbf{1}^{\T}}{n}\right)\mC_k\my_k\right]^{\T}\mD\left(\mI-\frac{\bar{v} \mathbf{1}^{\T}}{n}\right)\mC_k\my_k\\
		=\my_k^{\T}\mC_k^{\T}\left(\mI-\frac{\mathbf{1}\bar{v}^{\T}}{n}\right)\mD^{\T}\mD\left(\mI-\frac{\bar{v} \mathbf{1}^{\T}}{n}\right)\mC_k\my_k.
		\end{array}
		\end{align*}
		Taking conditional expectation on both sides,
		\begin{align*}
		\begin{array}{l}
		\quad\bE\left[\left\|\left(\mI-\frac{\bar{v} \mathbf{1}^{\T}}{n}\right)\mC_k\my_k\right\|_{\mrD}^2\mid\mathcal{H}_k\right]\\
		=\my_k^{\T}\bar{\mC}^{\T}\left(\mI-\frac{\mathbf{1}\bar{v}^{\T}}{n}\right)\mD^{\T}\mD\left(\mI-\frac{\bar{v} \mathbf{1}^{\T}}{n}\right)\bar{\mC}\my_k\\
		\quad+\bE\left[\my_k^{\T}\tilde{\mC}_k^{\T}\left(\mI-\frac{\mathbf{1}\bar{v}^{\T}}{n}\right)\mD^{\T}\mD\left(\mI-\frac{\bar{v} \mathbf{1}^{\T}}{n}\right)\tilde{\mC}_k\my_k\mid\mathcal{H}_k\right]\\
		=\left(\my_k-\bar{v}\oy_k\right)^{\T}\left(\bar{\mC}^{\T}-\frac{\mathbf{1}\bar{v}^{\T}}{n}\right)\mD^{\T}\mD\left(\bar{\mC}-\frac{\bar{v} \mathbf{1}^{\T}}{n}\right)\left(\my_k-\bar{v}\oy_k\right)\\
		\quad+\my_k^{\T}\bE\left[\tilde{\mC}_k^{\T}\mD^{\T}\mD\tilde{\mC}_k\right]\my_k\\
		\le \left\|\bar{\mC}-\frac{\bar{v} \mathbf{1}^{\T}}{n}\right\|_{\mrD}^2\left\|\my_k-\bar{v}\oy_k\right\|_{\mrD}^2\\
		\quad+\gamma^2\my_k^{\T}\mD^{\T}\bE\left[(\mD^{-1})^{\T}\tilde{\mE}_k^{\T}\mD^{\T}\mD\tilde{\mE}_k\mD^{-1}\right]\mD\my_k\\
		\le \|\mJ_{\mrC}\|_2^2\left\|\my_k-\bar{v}\oy_k\right\|_{\mrD}^2+\gamma^2\|\bar{\mV}_{\mrE}\|_2\left\|\my_k\right\|_{\mrD}^2\\
		\le \left(\|\mJ_{\mrC}\|_2^2+2\gamma^2\|\bar{\mV}_{\mrE}\|_2\right)\left\|\my_k-\bar{v}\oy_k\right\|_{\mrD}^2+2\gamma^2\|\bar{\mV}_{\mrE}\|_2\left\|\bar{v}\oy_k\right\|_{\mrD}^2,
		\end{array}
		\end{align*}
		where we invoked Lemma \ref{lem: Jordan form RC} for the second to last relation.
	\end{proof}
	
	\begin{lemma}
		\label{lem: x-xstar_pre gpp}
		Under Assumption \ref{asp: nonempty root set}, we have
		\begin{align*}
		\begin{array}{l}
		\quad\bE\left[\|\bar{x}_{k+1}-x^*\|_2^2\mid\mathcal{H}_k\right]\\
		\le \left[1-\alpha\eta\mu+\frac{4\alpha^2}{n^2}\left\|\bE\left[\mQ_k^{\T}\bar{u}\bar{u}^{\T}\mQ_k\right]\right\|_2\|\bar{v}\|_2^2L^2\right]\|\bar{x}_k-x^*\|_2^2\\
		\quad+\left(\frac{2\alpha\eta L^2}{\mu n}+\frac{4\alpha^2 L^2}{n^3}\left\|\bE\left[\mQ_k^{\T}\bar{u}\bar{u}^{\T}\mQ_k\right]\right\|_2\|\bar{v}\|_2^2\right.\\
		\quad\left.+\frac{1}{n^2}\left\|\bE\left[\tilde{\mR}_k^{\T}\bar{u}\bar{u}^{\T}\tilde{\mR}_k\right]\right\|_2\right)\|\mx_k-\mathbf{1}\ox_k\|_2^2\\
		\quad+\left(\frac{2\alpha}{\eta \mu n^2}\left\|\bar{u}^{\T}\bE[\mQ_k]\right\|_2^2+\frac{2\alpha^2}{{n^2}}\left\|\bE\left[\mQ_k^{\T}\bar{u}\bar{u}^{\T}\mQ_k\right]\right\|_2\right)\\
		\quad\cdot\|\my_k-\bar{v}\oy_k\|_2^2,
		\end{array}
		\end{align*}
		where $\eta := \bar{u}^{\T}\bE[\mQ_k] \bar{v}/n>0.$
	\end{lemma}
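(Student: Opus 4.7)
My plan is to decompose $\bar{x}_{k+1}-x^*$ into an $\mathcal{H}_k$-measurable ``signal'' piece plus a conditionally mean-zero ``noise'' piece, so that the conditional variance decomposition removes all cross-terms between the two at once. Using \eqref{eq:x-update GPP}, the identity $\bar{u}^{\T}\bar{\mR}=\bar{u}^{\T}$, and the row-stochasticity $\mR_k\mathbf{1}=\mathbf{1}$ (hence $\tilde{\mR}_k\mathbf{1}=0$), I would first rewrite
\[
\bar{x}_{k+1}=\bar{x}_k-\alpha\tfrac{\bar{u}^{\T}}{n}\mQ_k\my_k+\tfrac{\bar{u}^{\T}}{n}\tilde{\mR}_k(\mx_k-\mathbf{1}\bar{x}_k).
\]
Setting $g_k:=\tfrac{1}{n}\mathbf{1}^{\T}\nabla F(\mathbf{1}\bar{x}_k)$ and inserting $\pm\bar{v}\oy_k$, $\pm\bE[\mQ_k]$ and $\pm g_k$ in the gradient term, I split $\bar{x}_{k+1}-x^*=A+B$ with
\begin{align*}
A&=(\bar{x}_k-\alpha\eta g_k-x^*)-\alpha\eta(\oy_k-g_k)-\alpha\tfrac{\bar{u}^{\T}\bE[\mQ_k]}{n}(\my_k-\bar{v}\oy_k),\\
B&=-\alpha\tfrac{\bar{u}^{\T}(\mQ_k-\bE[\mQ_k])\bar{v}}{n}\oy_k-\alpha\tfrac{\bar{u}^{\T}(\mQ_k-\bE[\mQ_k])}{n}(\my_k-\bar{v}\oy_k)+\tfrac{\bar{u}^{\T}}{n}\tilde{\mR}_k(\mx_k-\mathbf{1}\bar{x}_k).
\end{align*}
Since $A$ is $\mathcal{H}_k$-measurable and $\bE[B\mid\mathcal{H}_k]=0$, the Pythagoras-style identity $\bE[\|\bar{x}_{k+1}-x^*\|_2^2\mid\mathcal{H}_k]=\|A\|_2^2+\bE[\|B\|_2^2\mid\mathcal{H}_k]$ holds.

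To bound $\|A\|_2^2$ I would reuse the reasoning of Lemma \ref{lem: Lipschitz implications} adapted to the gossip setting (with $u$ replaced by $\bar{u}$, $\ox_k$ by $\bar{x}_k$ and $\alpha'$ by $\alpha\eta$): provided $\alpha\eta\le 2/(\mu+L)$, one has $\|\bar{x}_k-\alpha\eta g_k-x^*\|_2\le(1-\alpha\eta\mu)\|\bar{x}_k-x^*\|_2$ together with the Lipschitz estimates $\|\oy_k-g_k\|_2\le L\|\mx_k-\mathbf{1}\bar{x}_k\|_2/\sqrt{n}$ and $\|g_k\|_2\le L\|\bar{x}_k-x^*\|_2$. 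The Young-type inequality $\|A_1+(A_2+A_3)\|_2^2\le(1+c)\|A_1\|_2^2+2(1+1/c)(\|A_2\|_2^2+\|A_3\|_2^2)$, with the key choice $c=\alpha\eta\mu$, then produces precisely the $(1-\alpha\eta\mu)$ coefficient in front of $\|\bar{x}_k-x^*\|_2^2$ together with the cross-coefficients $\tfrac{2\alpha\eta L^2}{\mu n}\|\mx_k-\mathbf{1}\bar{x}_k\|_2^2$ and $\tfrac{2\alpha}{\eta\mu n^2}\|\bar{u}^{\T}\bE[\mQ_k]\|_2^2\|\my_k-\bar{v}\oy_k\|_2^2$ that appear in the statement.

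To bound $\bE[\|B\|_2^2\mid\mathcal{H}_k]$ I would combine the first two summands of $B$ into $-\tfrac{\alpha}{n}\bar{u}^{\T}(\mQ_k-\bE[\mQ_k])\my_k$ and use the covariance dominance $\bE[(\mQ_k-\bE[\mQ_k])^{\T}\bar{u}\bar{u}^{\T}(\mQ_k-\bE[\mQ_k])]\preceq \bE[\mQ_k^{\T}\bar{u}\bar{u}^{\T}\mQ_k]$ to obtain
\[
\bE\bigl[\bigl\|\tfrac{\alpha}{n}\bar{u}^{\T}(\mQ_k-\bE[\mQ_k])\my_k\bigr\|_2^2\bigm|\mathcal{H}_k\bigr]\le\tfrac{\alpha^2}{n^2}\|\bE[\mQ_k^{\T}\bar{u}\bar{u}^{\T}\mQ_k]\|_2\|\my_k\|_2^2.
\]
Splitting $\|\my_k\|_2^2\le 2\|\my_k-\bar{v}\oy_k\|_2^2+2\|\bar{v}\|_2^2\|\oy_k\|_2^2$ and then $\|\oy_k\|_2^2\le 2L^2\|\bar{x}_k-x^*\|_2^2+\tfrac{2L^2}{n}\|\mx_k-\mathbf{1}\bar{x}_k\|_2^2$ delivers the three $\mQ_k$-driven coefficients $\tfrac{2\alpha^2}{n^2}\|\bE[\mQ_k^{\T}\bar{u}\bar{u}^{\T}\mQ_k]\|_2$, $\tfrac{4\alpha^2}{n^2}\|\bE[\mQ_k^{\T}\bar{u}\bar{u}^{\T}\mQ_k]\|_2\|\bar{v}\|_2^2L^2$ and $\tfrac{4\alpha^2 L^2}{n^3}\|\bE[\mQ_k^{\T}\bar{u}\bar{u}^{\T}\mQ_k]\|_2\|\bar{v}\|_2^2$. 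The $\tilde{\mR}_k$-driven summand contributes the remaining coefficient $\tfrac{1}{n^2}\|\bE[\tilde{\mR}_k^{\T}\bar{u}\bar{u}^{\T}\tilde{\mR}_k]\|_2$ through a direct Rayleigh-quotient style bound on $\bE[\|\tfrac{\bar{u}^{\T}}{n}\tilde{\mR}_k(\mx_k-\mathbf{1}\bar{x}_k)\|_2^2\mid\mathcal{H}_k]$.

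The main technical obstacle I anticipate is the conditional cross-covariance between the $\mQ_k$-driven part of $B$ and its $\tilde{\mR}_k$-driven part. Because $\mQ_k=\dia{e_{i_k}+e_{j_k}+e_{l_k}}$ and $\mR_k$ in \eqref{R_k C_k} share the same gossip draw $(i_k,j_k,l_k)$, the two pieces are not conditionally independent, and the bilinear term $2\bE[\langle\cdot,\cdot\rangle\mid\mathcal{H}_k]$ does not vanish for free. I would handle it by exploiting the sparse rank-one structure in \eqref{R_k C_k}: since $\tilde{\mR}_k$ only perturbs the single row indexed by $j_k$ while the $(\mQ_k-\bE[\mQ_k])$ factor only hits entries $\{i_k,j_k,l_k\}$, the cross-covariance matrix should admit a block bound that is either non-positive or absorbable into $\|\bE[\mQ_k^{\T}\bar{u}\bar{u}^{\T}\mQ_k]\|_2$ and $\|\bE[\tilde{\mR}_k^{\T}\bar{u}\bar{u}^{\T}\tilde{\mR}_k]\|_2$ without the usual factor-of-two Young inflation, matching the coefficients stated.
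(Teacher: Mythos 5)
Your splitting $\bar{x}_{k+1}-x^*=A+B$ is algebraically exact and the orthogonality $\bE[\langle A,B\rangle\mid\mathcal{H}_k]=0$ does hold, but the argument stalls exactly where you say it might: inside $\bE[\|B\|_2^2\mid\mathcal{H}_k]$ the cross term $2\bE\bigl[\bigl\langle \tfrac{\alpha}{n}\bar{u}^{\T}(\mQ_k-\bE[\mQ_k])\my_k,\ \tfrac{1}{n}\bar{u}^{\T}\tilde{\mR}_k(\mx_k-\mathbf{1}\ox_k)\bigr\rangle\mid\mathcal{H}_k\bigr]$ does not vanish, because $\mQ_k$ and $\tilde{\mR}_k$ are functions of the same draw $(i_k,j_k,l_k)$, and it is not sign-definite. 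Your proposed resolution --- that the sparse rank-one structure makes it ``non-positive or absorbable \ldots without the usual factor-of-two Young inflation'' --- is a conjecture, not a proof; any generic Cauchy--Schwarz or Young treatment doubles the coefficients of both $\|\bE[\mQ_k^{\T}\bar{u}\bar{u}^{\T}\mQ_k]\|_2$ and $\|\bE[\tilde{\mR}_k^{\T}\bar{u}\bar{u}^{\T}\tilde{\mR}_k]\|_2$ and no longer yields the stated bound. (You have in fact located a real subtlety: the paper's own proof expands the square directly and passes to conditional expectation with an equality that silently discards this same cross term. But flagging the issue is not the same as closing it.)

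There are also two quantitative mismatches with the stated constants. First, the paper does not use the contraction $\|\ox_k-\alpha\eta g_k-x^*\|_2\le(1-\alpha\eta\mu)\|\ox_k-x^*\|_2$; it uses strong convexity in inner-product form, $\langle \ox_k-x^*,g_k\rangle\ge\mu\|\ox_k-x^*\|_2^2$, which gives $1-2\alpha\eta\mu$ up front and then donates $\tfrac{\alpha\eta\mu}{2}\|\ox_k-x^*\|_2^2$ twice through two Young inequalities to land on $1-\alpha\eta\mu$. This matters because your route requires the extra hypothesis $\alpha\eta\le 2/(\mu+L)$, which the lemma does not assume. Second, your Young split with $c=\alpha\eta\mu$ leaves residual terms $2(\alpha\eta)^2\|\oy_k-g_k\|_2^2$ and $\tfrac{2\alpha^2}{n^2}\|\bar{u}^{\T}\bE[\mQ_k]\|_2^2\|\my_k-\bar{v}\oy_k\|_2^2$ from the $A$-part; added to the $B$-part's centered second moment, the coefficient of $\|\my_k-\bar{v}\oy_k\|_2^2$ can reach $\tfrac{4\alpha^2}{n^2}\|\bE[\mQ_k^{\T}\bar{u}\bar{u}^{\T}\mQ_k]\|_2$ rather than the stated $\tfrac{2\alpha^2}{n^2}\|\bE[\mQ_k^{\T}\bar{u}\bar{u}^{\T}\mQ_k]\|_2$. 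To recover the constants as stated, follow the paper's route: expand $\|\ox_{k+1}-x^*\|_2^2$ directly, use $\bE[\tilde{\mR}_k]=0$ only to kill the linear term against $\ox_k-x^*$, bound $\bE[\|\tfrac{\bar{u}^{\T}}{n}\mQ_k\my_k\|_2^2\mid\mathcal{H}_k]$ by $\tfrac{1}{n^2}\|\bE[\mQ_k^{\T}\bar{u}\bar{u}^{\T}\mQ_k]\|_2\|\my_k\|_2^2$ without centering $\mQ_k$, and apply the two Young inequalities with weight $\mu/2$ on the $\|\ox_k-x^*\|_2^2$ side.
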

	\begin{proof}
		See Appendix \ref{proof: lem: x-xstar_pre gpp}.
	\end{proof}
	Similar to Lemma \ref{lem: norm equivalence}, we have the following relation between norms $\|\cdot\|_2$, $\|\cdot\|_{\mrS}$ and $\|\cdot\|_{\mrD}$.
	\begin{lemma}
		\label{lem: norm equivalence 2}
		There exist constants $\delta_{\mrD,\mrS}, \delta_{\mrD,2}, \delta_{\mrS,\mrD}, \delta_{\mrS,2}>0$ such that for all $\mx\in\mathbb{R}^{n\times p}$, we have  $\|\mx\|_{\mrD}\le \delta_{\mrD,\mrS}\|\mx\|_{\mrS}$, $\|\mx\|_{\mrD}\le \delta_{\mrD,2}\|\mx\|_2$, $\|\mx\|_{\mrS}\le \delta_{\mrS,\mrD}\|\mx\|_{\mrD}$,  and $\|\mx\|_{\mrS}\le \delta_{\mrS,2}\|\mx\|_2$. In addition, with 
		a proper rescaling of the norms $\|\cdot\|_{\mrS}$ and $\|\cdot\|_{\mrD}$, we have
		$\|\mx\|_2\le \|\mx\|_{\mrS}$ and $\|\mx\|_2\le \|\mx\|_{\mrD}$ for all $\mx$.
	\end{lemma}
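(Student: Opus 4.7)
The plan is to argue this lemma exactly as the analogous Lemma~\ref{lem: norm equivalence} was established earlier: it is a direct consequence of two facts, namely that (i) all norms on the finite-dimensional space $\mathbb{R}^n$ are equivalent, and (ii) Definition~\ref{def: norm n p} lifts any vector norm on $\mathbb{R}^n$ to a norm on $\mathbb{R}^{n\times p}$ in a way that preserves equivalence constants.

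First I would verify that $\|\cdot\|_{\mrS}$ and $\|\cdot\|_{\mrD}$, defined on $\mathbb{R}^n$ by $\|x\|_{\mrS}=\|\mS x\|_2$ and $\|x\|_{\mrD}=\|\mD x\|_2$, are genuine vector norms; this follows because $\mS$ and $\mD$ (from Lemma~\ref{lem: TE decomposition}) are invertible. By the norm-equivalence theorem on $\mathbb{R}^n$, there then exist positive constants $\delta_{\mrD,\mrS},\delta_{\mrD,2},\delta_{\mrS,\mrD},\delta_{\mrS,2}$ such that the four pointwise inequalities hold on $\mathbb{R}^n$.

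Next I would lift these inequalities to $\mathbb{R}^{n\times p}$. For any $\mx\in\mathbb{R}^{n\times p}$ with columns $\mx^{(1)},\ldots,\mx^{(p)}\in\mathbb{R}^n$, the column-wise bound $\|\mx^{(j)}\|_{\mrD}\le \delta_{\mrD,\mrS}\|\mx^{(j)}\|_{\mrS}$ applied entrywise, followed by monotonicity of the $2$-norm on the vector of column norms in Definition~\ref{def: norm n p}, yields $\|\mx\|_{\mrD}\le \delta_{\mrD,\mrS}\|\mx\|_{\mrS}$; the other three inequalities are obtained identically.

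Finally, for the rescaling claim, I would note that because $\|x\|_2\le c_{\mrS}\|x\|_{\mrS}$ on $\mathbb{R}^n$ for some $c_{\mrS}>0$, replacing $\mS$ by $c_{\mrS}\mS$ (and analogously $\mD$ by $c_{\mrD}\mD$) rescales the vector norm so that $\|x\|_2\le \|x\|_{\mrS}$. Crucially, this rescaling leaves the induced matrix norm unchanged, since $\|(c\mS)^{-1}\mW(c\mS)\|_2=\|\mS^{-1}\mW\mS\|_2$, so all matrix-norm bounds involving $\|\cdot\|_{\mrS}$ and $\|\cdot\|_{\mrD}$ used in Lemmas~\ref{lem: x_consensus gpp}--\ref{lem: x-xstar_pre gpp} are unaffected. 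The lifted inequality $\|\mx\|_2\le \|\mx\|_{\mrS}$ on $\mathbb{R}^{n\times p}$ then follows by applying the vector bound column-wise and invoking Definition~\ref{def: norm n p} with the same $2$-norm aggregation used to define both sides. There is essentially no obstacle here; the only thing worth flagging is the need to verify that rescaling does not disturb the constants established in earlier supporting lemmas, which is immediate from the similarity-invariance of the induced matrix norm.
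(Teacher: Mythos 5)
Your proposal is correct and follows essentially the same route the paper takes: the paper gives no separate proof for this lemma, deferring to the argument for Lemma~\ref{lem: norm equivalence}, which is exactly the norm-equivalence theorem on $\mathbb{R}^n$ combined with the column-wise lifting of Definition~\ref{def: norm n p}. Your additional observation that rescaling $\mS$ and $\mD$ by a scalar leaves the induced matrix norms invariant is a worthwhile detail the paper leaves implicit.
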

	
	In the following lemma, we establish a linear system of inequalities that bound $\bE[\|\ox_{k+1}-x^*\|_2^2]$, $\bE[\|\mx_{k+1}-\mathbf{1}\ox_k\|_{\mrS}^2]$ and $\bE[\|\my_{k+1}-v\oy_k\|_{\mrD}^2]$.
	\begin{lemma}
		\label{lem: linear system gossip}
		Under Assumptions \ref{asp; strconvex Lipschitz}-\ref{asp: nonempty root set}, we have the following linear system of inequalities:
		\begin{equation}
		\label{main ineqalities_gpp}
		\begin{array}{l}
		\begin{bmatrix}
		\bE[\|\ox_{k+1}-x^*\|_2^2]\\
		\bE[\|\mx_{k+1}-\mathbf{1}\ox_{k+1}\|_{\mrS}^2]\\
		\bE[\|\my_{k+1}-v\oy_{k+1}\|_{\mrD}^2]
		\end{bmatrix}
		\le
		\mB
		\begin{bmatrix}
		\bE[\|\ox_k-x^*\|_2^2]\\
		\bE[\|\mx_k-\mathbf{1}\ox_k\|_{\mrS}^2]\\
		\bE[\|\my_k-v\oy_k\|_{\mrD}^2]
		\end{bmatrix},
		\end{array}
		\end{equation}
		where the inequality is to be taken component-wise, and elements of the transition matrix $\mB=[b_{ij}]$ are given by:
		\begin{align}
		\label{matrix_B}
		\begin{array}{l}
		b_{11}=1-\alpha\eta\mu+\frac{4\alpha^2 L^2}{n^2}\left\|\bE\left[\mQ_k^{\T}\bar{u}\bar{u}^{\T}\mQ_k\right]\right\|_2\|\bar{v}\|_2^2,\\
		b_{21}=3\alpha^2L^2\left(\frac{1+\sigma_{\bar{\mrR}}}{1-\sigma_{\bar{\mrR}}}\right)\|\bar{\mV}_{\mrQ}\|_2\|\bar{v}\|_{\mrS}^2,\\
		b_{31}=2L^2\left[\frac{(1+\sigma_{\bar{\mrC}})}{\sigma_{\bar{\mrC}}}\gamma^2\|\bar{\mV}_{\mrE}\|_2\|\bar{v}\|_{\mrD}^2\right.\\
		\qquad\quad\left.+4\alpha^2\delta_{\mrD,2}^2 L^2\left(\frac{1+\sigma_{\bar{\mrC}}}{1-\sigma_{\bar{\mrC}}}\right)\left\|\mI-\frac{\bar{v} \mathbf{1}^{\T}}{n}\right\|_{\mrD}^2\bE[\|\mQ_k\|_2^2]\|\bar{v}\|_2^2\right],\\
		b_{12}=\frac{2\alpha\eta L^2}{\mu n}+\frac{4\alpha^2 L^2}{n^3}\left\|\bE\left[\mQ_k^{\T}\bar{u}\bar{u}^{\T}\mQ_k\right]\right\|_2\|\bar{v}\|_2^2\\
		\qquad\quad+\frac{1}{n^2}\left\|\bE\left[\tilde{\mR}_k^{\T}\bar{u}\bar{u}^{\T}\tilde{\mR}_k\right]\right\|_2,\\
		b_{22}=\frac{(1+\sigma_{\bar{\mrR}})}{2}+\frac{3\alpha^2L^2}{n}\left(\frac{1+\sigma_{\bar{\mrR}}}{1-\sigma_{\bar{\mrR}}}\right)\|\bar{\mV}_{\mrQ}\|_2\|\bar{v}\|_{\mrS}^2,\\
		b_{32}=2\delta_{\mrD,2}^2 L^2\left(\frac{1+\sigma_{\bar{\mrC}}}{1-\sigma_{\bar{\mrC}}}\right)\left\|\mI-\frac{\bar{v} \mathbf{1}^{\T}}{n}\right\|_{\mrD}^2\\ \qquad\quad\cdot\left[\bE\left[\|\mR_k-\mI\|_2^2\right]+\frac{4\alpha^2 L^2}{n}\bE[\|\mQ_k\|_2^2]\|\bar{v}\|_2^2\right]\\
		\qquad\quad+\frac{2\gamma^2L^2}{n}\frac{(1+\sigma_{\bar{\mrC}})}{\sigma_{\bar{\mrC}}}\|\bar{\mV}_{\mrE}\|_2\|\bar{v}\|_{\mrD}^2,\\
		b_{13}=\frac{2\alpha}{\eta \mu n^2}\left\|\bar{u}^{\T}\bE[\mQ_k]\right\|_2^2+\frac{2\alpha^2}{{n^2}}\left\|\bE\left[\mQ_k^{\T}\bar{u}\bar{u}^{\T}\mQ_k\right]\right\|_2,\\
		b_{23}=3\alpha^2\left(\frac{1+\sigma_{\bar{\mrR}}}{1-\sigma_{\bar{\mrR}}}\right)\|\bar{\mV}_{\mrQ}\|_2\delta_{\mrS,\mrD},\\
		b_{33}=\frac{(1+\sigma_{\bar{\mrC}})}{2}+4\alpha^2 \delta_{\mrD,2}^2 L^2\left(\frac{1+\sigma_{\bar{\mrC}}}{1-\sigma_{\bar{\mrC}}}\right)\left\|\mI-\frac{\bar{v} \mathbf{1}^{\T}}{n}\right\|_{\mrD}^2 \bE[\|\mQ_k\|_2^2].
		\end{array}
		\end{align}
	\end{lemma}
	\begin{proof}
		See Appendix \ref{proof: lem: linear system gossip}.
	\end{proof}
	
	Now we are ready to prove the main convergence result for G-Push-Pull.
	\subsection{Proof of Theorem \ref{theory:main_gpp}}
	Let
	\begin{align}
	\label{d1d7}
	\begin{array}{l}
	d_1:=\frac{2\left\|\bE\left[\mQ_k^{\T}\bar{u}\bar{u}^{\T}\mQ_k\right]\right\|_2}{n^2}, \\
	d_2:=3\left(\frac{1+\sigma_{\bar{\mrR}}}{1-\sigma_{\bar{\mrR}}}\right)\|\bar{\mV}_{\mrQ}\|_2\\
	d_3:=4\delta_{\mrD,2}^2 \left(\frac{1+\sigma_{\bar{\mrC}}}{1-\sigma_{\bar{\mrC}}}\right)\left\|\mI-\frac{\bar{v} \mathbf{1}^{\T}}{n}\right\|_{\mrD}^2 \bE[\|\mQ_k\|_2^2], \\
	d_4:= \frac{1}{n}\frac{(1+\sigma_{\bar{\mrC}})}{\sigma_{\bar{\mrC}}}\|\bar{\mV}_{\mrE}\|_2\|\bar{v}\|_{\mrD}^2\\
	d_5:= \frac{1}{n^2}\left\|\bE\left[\tilde{\mR}_k^{\T}\bar{u}\bar{u}^{\T}\tilde{\mR}_k\right]\right\|_2, \\
	d_6:= \delta_{\mrD,2}^2 \left(\frac{1+\sigma_{\bar{\mrC}}}{1-\sigma_{\bar{\mrC}}}\right)\left\|\mI-\frac{\bar{v} \mathbf{1}^{\T}}{n}\right\|_{\mrD}^2 \bE[\|\mR_k-\mI\|_2^2]\\
	d_7:= \frac{\left\|\bar{u}^{\T}\bE[\mQ_k]\right\|_2^2}{n^2}.
	\end{array}
	\end{align}
	We can rewrite the elements of $\mB$ as
	\begin{align*}
	\begin{array}{l}
	b_{11} = 1-\alpha\eta\mu+2\alpha^2 L^2\|\bar{v}\|_2^2d_1,\\
	b_{21} = \alpha^2L^2 d_2\|\bar{v}\|_{\mrS}^2,\\
	b_{31} = 2L^2\left(\gamma^2 d_4+\alpha^2 L^2 d_3\|\bar{v}\|_2^2\right),\\
	b_{12} = \frac{2\alpha\eta L^2}{\mu n}+\frac{2\alpha^2 L^2 \|\bar{v}\|_2^2 d_1}{n}+d_5,\\
	b_{22} = \frac{(1+\sigma_{\bar{\mrR}})}{2}+\frac{\alpha^2L^2 d_2\|\bar{v}\|_{\mrS}^2}{n},\\
	b_{32} = 2 L^2 (d_6+\alpha^2 L^2 d_3\|\bar{v}\|_2^2+\gamma^2 d_4),\\
	b_{13} = \frac{2\alpha}{\eta \mu }d_7+\alpha^2 d_1,\\
	b_{23} = \alpha^2 d_2\delta_{\mrS,\mrD},\\
	b_{33} = \frac{(1+\sigma_{\bar{\mrC}})}{2}+\alpha^2 L^2 d_3.
	\end{array}
	\end{align*}
	According to Lemma \ref{lem: spectral radii}, a sufficient condition for $\rho(\mB)<1$ is $b_{11},b_{22},b_{33}<1$ and $\det(\mI-\mB)>0$, or
	\begin{align} \label{|I-B|>0}
	\begin{array}{l}
	\mathrm{det}(\mathbf{I}-\mB)=(1-b_{11})(1-b_{22})(1-b_{33})-b_{12}b_{23}b_{31}\\
	-b_{13}b_{21}b_{32}-(1-b_{22})b_{13}b_{31}-(1-b_{11})b_{23}b_{32}\\
	-(1-b_{33})b_{12}b_{21}>0.
	\end{array}
	\end{align}
	Let $\alpha$ satisfy the following inequalities.
	\begin{align}\label{alpha_conditions_pre_gpp}
	\begin{array}{l}
	b_{11}\le 1-\frac{1}{2}\alpha\eta\mu,\qquad b_{22}\le \frac{(3+\sigma_{\bar{\mrR}})}{4},\\
	b_{33}\le \frac{(3+\sigma_{\bar{\mrC}})}{4}, \qquad \frac{2\alpha^2 L^2\|\bar{v}\|_2^2 d_1}{n}\le \frac{\alpha\eta L^2}{\mu n},\\
	\alpha^2 d_1\le \frac{\alpha}{\eta \mu }d_7,\qquad \alpha^2 L^2 d_3\|\bar{v}\|_2^2\le d_6.
	\end{array}
	\end{align}
	Then it is sufficient that
	\begin{align*}
	\begin{array}{l}
	\frac{1}{2}\alpha\eta\mu \frac{(1-\sigma_{\bar{\mrR}})}{4}\frac{(1-\sigma_{\bar{\mrC}})}{4}\\
	-\left(\frac{3\alpha\eta L^2}{\mu n}+d_5\right)\alpha^2 d_2\delta_{\mrS,\mrD}
	2L^2\left(\gamma^2 d_4+\alpha^2 L^2 d_3\|\bar{v}\|_2^2\right)\\
	-\frac{3\alpha}{\eta \mu }d_7 	\alpha^2L^2 d_2\|\bar{v}\|_{\mrS}^2 2 L^2 \left(d_6+\alpha^2 L^2 d_3\|\bar{v}\|_2^2+\gamma^2 d_4\right)\\
	-\frac{(1-\sigma_{\bar{\mrR}})}{4}\frac{3\alpha}{\eta \mu }d_7 2L^2\left(\gamma^2 d_4+\alpha^2 L^2 d_3\|\bar{v}\|_2^2\right)\\
	-\frac{1}{2}\alpha\eta\mu \alpha^2 d_2\delta_{\mrS,\mrD} 2 L^2 \left(d_6+\alpha^2 L^2 d_3\|\bar{v}\|_2^2+\gamma^2 d_4\right)\\
	-\frac{(1-\sigma_{\bar{\mrC}})}{4}\left(\frac{3\alpha\eta L^2}{\mu n}+d_5\right)\alpha^2L^2 d_2\|\bar{v}\|_{\mrS}^2>0.
	\end{array}
	\end{align*}
	Since $\alpha^2 L^2 d_3\|\bar{v}\|_2^2\le d_6$ from (\ref{alpha_conditions_pre_gpp}), we only need
	\begin{align*}
	\begin{array}{l}
	\frac{1}{32}\eta\mu (1-\sigma_{\bar{\mrR}})(1-\sigma_{\bar{\mrC}})\\
	-\left(\frac{3\alpha\eta L^2}{\mu n}+d_5\right)2\alpha L^2d_2\delta_{\mrS,\mrD}
	\left(\gamma^2 d_4+d_6\right)\\
	-\frac{6\alpha^2 L^4}{\eta \mu }d_2 d_7\|\bar{v}\|_{\mrS}^2 \left(2d_6+\gamma^2 d_4\right)\\
	-\frac{3(1-\sigma_{\bar{\mrR}})}{2}\frac{L^2 d_7}{\eta \mu }\left(\gamma^2 d_4+\alpha^2 L^2 d_3\|\bar{v}\|_2^2\right)\\
	-\eta\mu \alpha^2 L^2 d_2\delta_{\mrS,\mrD} \left(2d_6+\gamma^2 d_4\right)\\
	-\frac{(1-\sigma_{\bar{\mrC}})}{4}\left(\frac{3\alpha\eta L^2}{\mu n}+d_5\right)\alpha L^2 d_2\|\bar{v}\|_{\mrS}^2>0.
	\end{array}
	\end{align*}
	We can rewrite the above inequality as 
	$
	c_4\alpha^2+c_5\alpha-c_6<0
	$,
	where
	\begin{align}
	\label{c4}
	\begin{array}{l}
	c_4=\frac{6\eta L^4}{\mu n} d_2\delta_{\mrS,\mrD}\left(d_6+\gamma^2 d_4\right)+\frac{6 L^4}{\eta \mu }d_2 d_7\|\bar{v}\|_{\mrS}^2 \left(2d_6+\gamma^2 d_4\right)\\
	\qquad+\frac{3(1-\sigma_{\bar{\mrR}})}{2}\frac{L^4 d_3 d_7}{\eta \mu }\|\bar{v}\|_2^2+\eta\mu L^2 d_2\delta_{\mrS,\mrD} \left(2d_6+\gamma^2 d_4\right)\\
	\qquad+\frac{3(1-\sigma_{\bar{\mrC}})}{4}\frac{\eta L^4 d_2}{\mu n}\|\bar{v}\|_{\mrS}^2,
	\end{array}
	\end{align}
	\begin{equation}
	\label{c5}
	\begin{array}{l}
	c_5=2L^2 d_2  d_5\delta_{\mrS,\mrD}\left(d_6+\gamma^2 d_4\right)+\frac{(1-\sigma_{\bar{\mrC}})}{4}L^2 d_2  d_5\|\bar{v}\|_{\mrS}^2,
	\end{array}
	\end{equation}
	and 
	\begin{equation}
	\label{c6}
	\begin{array}{l}
	c_6=\frac{1}{32}\eta\mu (1-\sigma_{\bar{\mrR}})(1-\sigma_{\bar{\mrC}})-\frac{3(1-\sigma_{\bar{\mrR}})}{2}\frac{L^2 d_4 d_7}{\eta \mu }\gamma^2.
	\end{array}
	\end{equation}
	In light of \eqref{gamma_condition_gpp}, we have $c_6>0$. Then
	\begin{equation*}
	\alpha \le \frac{2c_6}{c_5+\sqrt{c_5^2+4c_4c_6}}
	\end{equation*}
	is sufficient.
	
	\section{SIMULATIONS}
	\label{sec: simulation}
	In this section, we provide numerical comparisons of a few different algorithms under both synchronous and asynchronous random-gossip settings. The problem we consider is sensor fusion over a network, which is similar to the one considered in~\cite{xu2017convergence}. The estimation problem can be described as
	\[
	\min\limits_{x\in\R^p}~\sum\limits_{i=1}^n\left(\left\|z_i-H_ix\right\|^2+\lambda_i\left\|x\right\|^2\right),
	\]
	where $x$ is the unknown parameter to be estimated, $H_i\in\R^{s\times p}$ and $z_i\in\mathbb{R}^s$ represent the measurement matrix and the noisy observation of sensor $i$, respectively, and $\lambda_i$ is the regularization parameter for the local cost function of sensor $i$.
	
	We consider a sensor network that is randomly generated in a unit square, and two sensors are connected within a certain sensing range. The sensors are assumed to have asymmetric sensing ranges so as to construct a directed network, i.e., $1.5\sqrt{\log n/n}$ for outgoing links and $0.75\sqrt{\log n/n}$ for incoming links. We set $n=20,\,p=20,\,s=1$ and $\lambda_i=0.01,\forall i\in\mathcal{N}$ so that each local cost function is ill-conditioned, necessitating coordination among agents to achieve fast convergence. The measurement matrix is randomly generated from a standard normal distribution which is then normalized such that its Lipschitz constant is equal to $1$. We design the weight matrices $\mathbf{R}$ and $\mathbf{C}$ based on the same underlying graph (i.e., $\Gra_\mathbf{R}=\Gra_\mathbf{C}=\Gra$) and the constant weight rule, i.e., $\mR=\mI-\frac{1}{2d_{\max}^{\mathrm{in}}}\mathbf{L}_{\mrR}$ where $d_{\max}^{\mathrm{in}}$ is the maximum in-degree, and $\mathbf{L}_{\mrR}$ defines the Laplacian matrix corresponding to $\Gra_\mathbf{R}$ (using in-degree). Similarly, $\mC=\mI-\frac{1}{2d_{\max}^{\mathrm{out}}}\mathbf{L}_{\mrC}$.
	
	We compare our proposed Push-Pull algorithm against Push-DIGing \cite{nedic2017achieving} and Xi-Row \cite{xi2018linear} that are also applicable to directed networks. Push-DIGing is an algorithm building on push-sum protocols which works on merely column-stochastic matrices and thus only need push operations for the information dissemination in the network. Xi-Row is an algorithm that only uses row-stochastic matrices and thus only requires pull operations to fetch information in the network. By contrast, the proposed Push-Pull algorithm uses both row-stochastic and column-stochastic matrices for the information diffusion. 
	As we will show shortly in our simulations, Push-Pull works much better especially for ill-conditioned problems and when graphs are not well balanced. This is due to the fact that nodes with very few in-degree (resp., out-degree) neighbors (e.g., in significantly unbalanced directed graphs) will become a bottleneck to the information flow for row-stochastic (resp., column-stochastic) weight matrices. In contrast, Push-Pull relies on both out-degree and in-degree neighbors of the network and can thus diffuse information much fast. It should be also noted that the per-node storage complexity of Push-Pull (or Push-DIGing) is $O(p)$ while that of Xi-Row is $O(n+p)$. Since, at each iteration, the amount of data transmitted over each link also scales at such orders for these algorithms, respectively. For large-scale networks ($n\gg p$), Xi-Row may suffer from high needs in storage/bandwidth compared to the other methods.
	
	Fig.~\ref{fig:fixnet} illustrates the performance of the above algorithms under a randomly generated fixed network in terms of the (normalized) residual $\frac{\|\bx_k-\bx^*\|_2^2}{\|\bx_0-\bx^*\|_2^2}$. Since the upperbound on the stepsize for all the algorithms are derived from the small gain theorem (or similar techniques) and can be very conservative, we hand-optimize the stepsize for each method to make a fair comparison.  It can be seen from Fig.~\ref{fig:fixnet} that Push-Pull allows for much larger value of the stepsize compared to Push-DIGing and Xi-Row. In addition, it also enjoys much faster convergence speed.
	
	Under the asynchronous random-gossip setting, we compare G-Push-Pull (see section \ref{sec: G-Push-Pull}) against a variant of Push-DIGing which is shown to be applicable to the time-varying scenario~\cite{nedic2017achieving}. We use a random link activation model, that is, at each iteration, a subset of links will be randomly activated following a certain Bernoulli process ${B(0.1)}$, and the associated nodes will be awakened via ``push-notification'' or ``pull-notification''. These awakened nodes will then communicate with each other according to the gossip protocol that is designed based on the aforementioned constant weight rule.  The simulation results are averaged over 20 runs. Fig.~\ref{fig:asynet} illustrates the performance of the proposed G-Push-Pull algorithm against Push-DIGing. It can be seen that Push-DIGIng allows for very small values of the stepsize and suffers from some ``spikes" due to the use of division operations in the algorithm (note that the divisors can scale badly at the order of $\Omega(n^{-n})$~\cite{nedic2017achieving}).  In contrast, G-Push-Pull allows for very large value of stepsize and enjoys much faster convergence speed. More importantly, it converges linearly and steadily to the optimal solution.
	
	\begin{figure}[h]
		\begin{center}
			\vspace{-1em}
			\subfloat[Fixed directed network]{
				\includegraphics
				[width=0.9\linewidth]{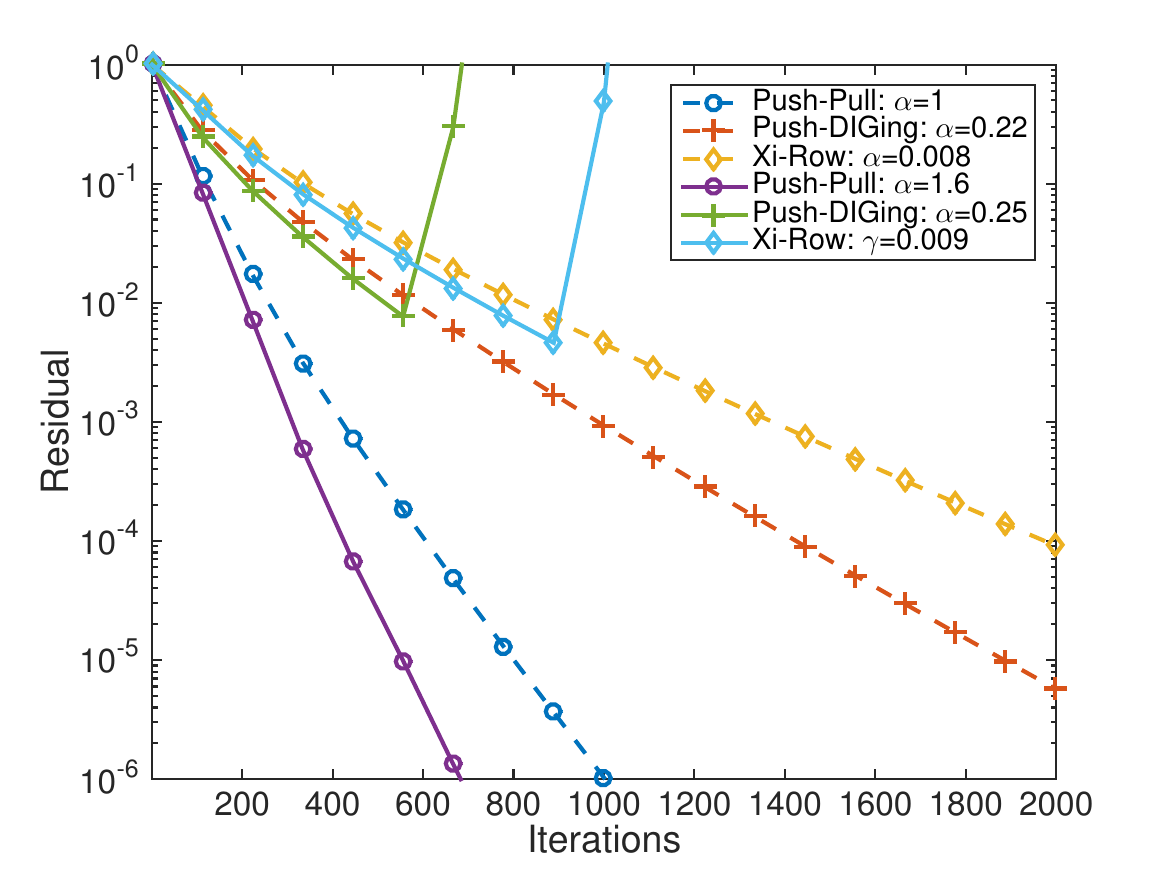}\label{fig:fixnet}}
			\\
			\subfloat[Asynchronous directed network]{
				\includegraphics
				[width=0.9\linewidth]{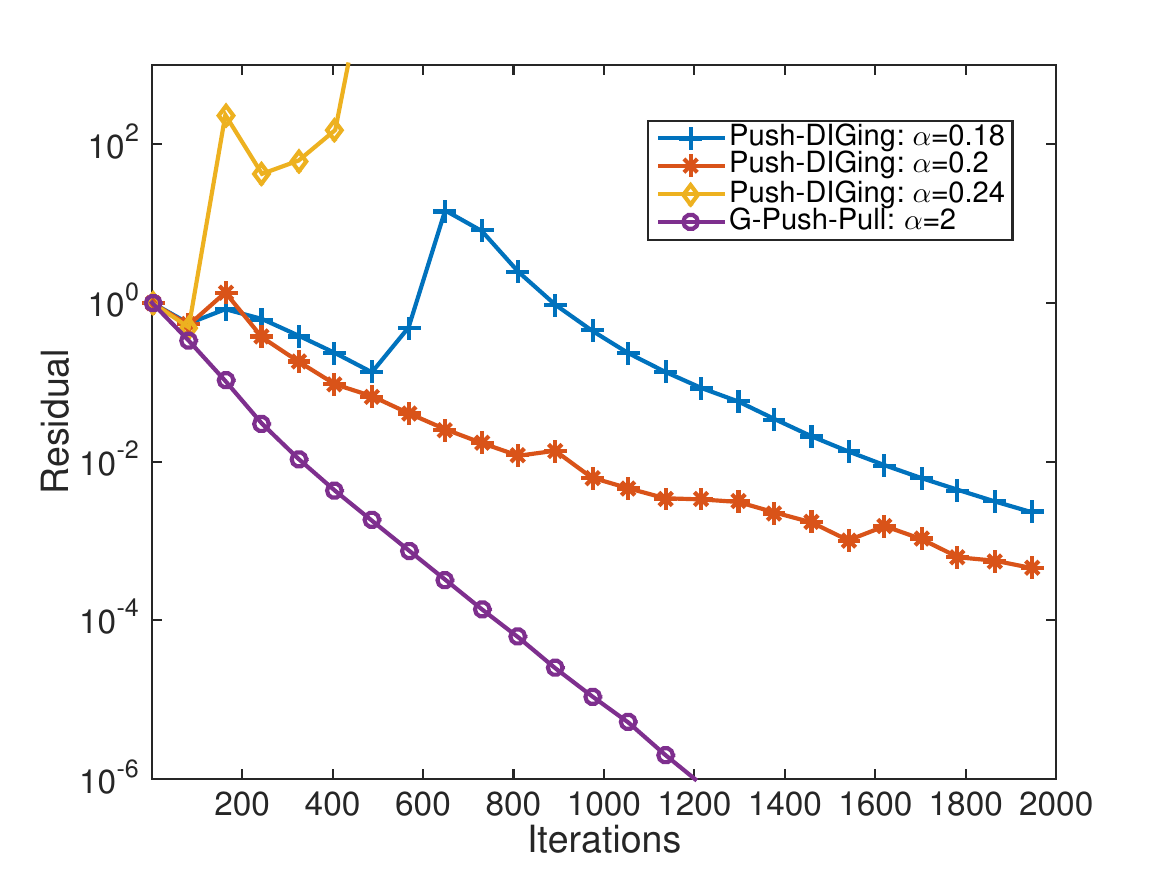}\label{fig:asynet}
			}
			\caption{Plots of (normalized) residuals versus the number of iterations over (a) a fixed directed network and (b) an asynchronous directed network. The simulation results for asynchronous stetting are averaged over 20 runs.}
		\end{center}
	\end{figure}

	\section{Conclusions}
	\label{sec: conclusion}
	In this paper, we have studied the problem of distributed optimization over a network.
	In particular, we proposed new distributed gradient-based methods (Push-Pull and G-Push-Pull) where each node maintains estimates of the optimal decision variable and the average gradient of the agents' objective functions. From the viewpoint of an agent, the information about the 
	gradients is pushed to its neighbors, while the information about the decision variable is pulled from its neighbors. The methods utilize two different graphs for the information exchange among agents and work for different types of distributed architecture, including decentralized, centralized, and semi-centralized  architecture. We have showed that the algorithms converge linearly for strongly convex and smooth objective functions over a directed network both for synchronous and asynchronous random-gossip updates. In the simulations, we have also demonstrated the effectiveness of the proposed algorithm as compared to the state-of-the-arts.

	
	%

	\appendices
	\section{Proofs for Push-Pull}	
	\subsection{Proof of Lemma \ref{lem: eigenvectors u v}}
	\label{subsec: proof eigenvectors u v}
	Denote by $u_i$ the $i$th element of $u$. We first prove that $u_i>0$ iff $i\in\mathcal{R}_\mathbf{R}$. Note that there exists an order of vertices such that $\mathbf{R}$ can be rewritten as
	$\tilde{\mathbf{R}}:=\begin{bmatrix}
	\mathbf{R}_1 & \mathbf{0}\\
	\mathbf{R}_2 & \mathbf{R}_3
	\end{bmatrix},$
	where $\mathbf{R}_1$ is a square matrix corresponding to vertices in $\mathcal{R}_\mathbf{R}$. Since the induced subgraph $\mathcal{G}_{\mathbf{R}_1}$ is strongly connected,
	$\mathbf{R}_1$ is row stochastic and irreducible. In light of the Perron-Frobenius theorem, $\mathbf{R}_1$ has a strictly positive left eigenvector $u_1^{\T}$ (with $u_1^{\T} \mathbf{1}=n$) corresponding to eigenvalue $1$. It follows that $[u_1, \mathbf{0}]^{\T}$ is a row eigenvector of $\tilde{\mathbf{R}}$, which is also unique from the Perron-Frobenius theorem. Since reordering of vertices does not change the corresponding eigenvector (up to permutation in the same oder of vertices), we have $u_i>0$ iff $i\in\mathcal{R}_\mathbf{R}$.
	Similarly, we can show that $v_j>0$ iff $j\in\mathcal{R}_{\mathbf{C}^{\T}}$. Since $\mathcal{R}_\mathbf{R}\cap\mathcal{R}_{\mathbf{C}^{\T}}\neq \emptyset$ from Assumption \ref{asp: nonempty root set}, we have $u^{\T}v>0$.
	
	\subsection{Proof of Lemma \ref{lem: Lipschitz implications}}
	\label{subsec: proof lemma Lipschitz implications}
	In light of Assumption \ref{asp; strconvex Lipschitz} and (\ref{oy_k}),
	$
	\|\oy_k-g_k\|_2=\frac{1}{n}\|\mathbf{1}^{\T}\nabla F(\mx_{k})-\mathbf{1}^{\T}\nabla F(\mathbf{1}\ox_k)\|_2
	\le \frac{L}{n}\sum_{i=1}^n \|x_{i,k}-\ox_k\|_2\le \frac{L}{\sqrt{n}}\|\mx_k-\mathbf{1}\ox_k\|_2,
	$
	and
	$
	\|g_k\|_2=\frac{1}{n}\|\mathbf{1}^{\T}\nabla F(\mathbf{1}\ox_k)-\mathbf{1}^{\T}\nabla F(\mathbf{1}x^*)\|_2
	\le \frac{L}{n}\sum_{i=1}^n \|\ox_k-x^*\|_2=L\|\ox_k-x^*\|_2.
	$
	Proof of the last relation can be found in \cite[Lemma 10]{qu2017harnessing}.
	
	\subsection{Proof of Lemma \ref{lem: spectral radii}}
	\label{subsec: proof lemma spectral radii}
	In light of \cite[Lemma 3.4]{ren2005consensus}, under Assumptions \ref{asp: stochastic}-\ref{asp: nonempty root set}, spectral radii of $\mR$ and $\mC$ are both equal to $1$ (the corresponding eigenvalues have multiplicity $1$). Suppose for some $\lambda, \tilde{u}\neq0$,
	$
	\tilde{u}^{\T}\left(\mR-\frac{\mathbf{1}u^{\T}}{n}\right)=\lambda\tilde{u}^{\T}.
	$
	Since $\mathbf{1}$ is a right eigenvector of $(\mR-\mathbf{1}u^{\T}/n)$ corresponding to eigenvalue $0$, $\tilde{u}^{\T}\mathbf{1}=0$ (see \cite[Theorem 1.4.7]{horn1990matrix}). We have
	$
	\tilde{u}^{\T} \mR=\lambda\tilde{u}.
	$
	Hence $\lambda$ is also an eigenvalue of $\mR$. Noticing that $u^{\T}\mathbf{1}=n$, we have $\tilde{u}^{\T}\neq u^{\T}$ so that $\lambda<1$. We conclude that $\sigma_{\mrR}<1$. Similarly we can obtain $\sigma_{\mrC}<1$.
	
	\subsection{Proof of Lemma \ref{lem: matrix norm production}}
	\label{subsec: proof lemma matrix norm production}
	By Definition \ref{def: norm n p}, 
	\begin{equation*}
	\begin{array}{l}
	\quad\|W\mx\|=\|[\|W\mx^1\|,\|W\mx^2\|,\ldots,\|W\mx^p\|]\|_2\\
	\le \|[\|W\|\|\mx^1\|,\|W\|\|\mx^2\|,\ldots,\|W\|\|\mx^p\|]\|_2\\
	=\|W\|\|[\|\mx^1\|,\|\mx^2\|,\ldots,\|\mx^p\|]\|_2=\|W\|\|\mx\|,
	\end{array}
	\end{equation*}
	and
	$
	\|wx\|=\|[\|wx^1\|,\|wx^2\|,\ldots,\|wx^p\|]\|_2=\|w\|\|[|x^1|,|x^2|,\ldots,|x^p|]\|_2=\|w\|\|x\|_2.
	$

	\subsection{Proof of Lemma \ref{lem: important inequalities}}
	\label{proof: important inequalities}
	The three inequalities embedded in (\ref{main ineqalities}) come from (\ref{ox pre}), (\ref{mx-ox pre}), and (\ref{my-oy pre}), respectively.
	First, by Lemma \ref{lem: Lipschitz implications} and Lemma \ref{lem: norm equivalence},
	we obtain from (\ref{ox pre}) that
	\begin{align*}
	\begin{array}{l}
	\quad\|\ox_{k+1}-x^*\|_2\le \|\ox_k-\alpha'g_k-x^*\|_2+\alpha'\|\oy_k-g_k\|_2\\
	\quad +\frac{1}{n}\|u^{\T}\balpha(\my_k-v\oy_k)\|_2\\
	\le (1-\alpha'\mu)\|\ox_k-x^*\|_2+\frac{\alpha'L}{\sqrt{n}}\|\mx_k-\mathbf{1}\ox_k\|_2\\
	\quad+\frac{\|u\|_2\|\balpha\|_2}{n}\|\my_k-v\oy_k\|_2\\
	\le (1-\alpha'\mu)\|\ox_k-x^*\|_2+\frac{\alpha'L}{\sqrt{n}}\|\mx_k-\mathbf{1}\ox_k\|_{\mrR}\\
	\quad+\frac{\hat{\alpha}\|u\|_2}{n}\|\my_k-v\oy_k\|_{\mrC}.
	\end{array}
	\end{align*}
	Second, by relation (\ref{mx-ox pre}), Lemma \ref{lem: matrix norm production} and Lemma \ref{lem: norm equivalence}, we see that
	\begin{equation*}
	\hspace{-0.6em}\begin{array}{l}
	\quad\|\mx_{k+1}-\mathbf{1}\ox_{k+1}\|_{\mrR} \le \sigma_{\mrR}\|\mx_k-\mathbf{1}\ox_k\|_{\mrR}+\sigma_{\mrR}\|\balpha\|_{\mrR}\|\my_k\|_{\mrR}\\
	\le \sigma_{\mrR}\|\mx_k-\mathbf{1}\ox_k\|_{\mrR}+\hat{\alpha}\sigma_{\mrR}\|\my_k-v\oy_k\|_{\mrR}+\hat{\alpha}\sigma_{\mrR}\|v\|_{\mrR}\|\oy_k\|_2\\
	\le \sigma_{\mrR}\|\mx_k-\mathbf{1}\ox_k\|_{\mrR}+\hat{\alpha}\sigma_{\mrR}\|\my_k-v\oy_k\|_{\mrR}\\
	\quad +\hat{\alpha}\sigma_{\mrR}\|v\|_{\mrR}\left(\frac{L}{\sqrt{n}}\|\mx_k-\mathbf{1}\ox_k\|_2+L\|\ox_k-x^*\|_2\right)\\
	\le \sigma_{\mrR}\left(1+\hat{\alpha} \|v\|_{\mrR} \frac{L}{\sqrt{n}}\right)\|\mx_k-\mathbf{1}\ox_k\|_{\mrR}+\hat{\alpha}\sigma_{\mrR}\delta_{\mrR,\mrC}\|\my_k-v\oy_k\|_{\mrC}\\
	\quad+\hat{\alpha}\sigma_{\mrR}\|v\|_{\mrR} L\|\ox_k-x^*\|_2.
	\end{array}
	\end{equation*}
	Lastly, it follows from (\ref{my-oy pre}),  Lemma \ref{lem: matrix norm production} and Lemma \ref{lem: norm equivalence} that
	\begin{equation*}
	\begin{array}{l}
	\quad\|\my_{k+1}-v\oy_{k+1}\|_{\mrC} \le \sigma_{\mrC}\|\my_k-v\oy_k\|_{\mrC}\\
	\quad+c_0 \delta_{\mrC,2}L\|\mx_{k+1}-\mx_k\|_2\\
	= \sigma_{\mrC}\|\my_k-v\oy_k\|_{\mrC}+c_0 \delta_{\mrC,2}L\|(\mR-\mI)(\mx_k-\mathbf{1}\ox_k)-\mR\balpha \my_k\|_2\\
	\le \sigma_{\mrC}\|\my_k-v\oy_k\|_{\mrC}+c_0 \delta_{\mrC,2}L\left(\|\mR-\mI\|_2\|\mx_k-\mathbf{1}\ox_k\|_2\right.\\
	\quad\left.+\left\|\mR\balpha(\my_k-v\oy_k)+\mR\balpha v\oy_k\right\|_2\right)\\
	\le \sigma_{\mrC}\|\my_k-v\oy_k\|_{\mrC}+c_0\delta_{\mrC,2}L(\|\mR-\mI\|_2\|\mx_k-\mathbf{1}\ox_k\|_2\\
	\quad+ \|\mR\|_2\|\balpha\|_2\|\my_k-v\oy_k\|_2+\|\mR\|_2 \|\balpha\|_2 \|v\|_2\|\oy_k\|_2).
	\end{array}
	\end{equation*}
	In light of Lemma \ref{lem: Lipschitz implications},
	\[
	\|\oy_k\|_2\le \left(\frac{L}{\sqrt{n}}\|\mx_k-\mathbf{1}\ox_k\|_2+L\|\ox_k-x^*\|_2\right).\]
	Hence
	\begin{equation*}
	\begin{array}{l}
	\quad\|\my_{k+1}-v\oy_{k+1}\|_{\mrC} \\
	\le \left(\sigma_{\mrC}+ \hat{\alpha}c_0\delta_{\mrC,2}\|\mR\|_2 L\right)\|\my_k-v\oy_k\|_{\mrC}\\
	\quad+c_0 \delta_{\mrC,2} L\|\mR-\mI\|_2\|\mx_k-\mathbf{1}\ox_k\|_2\\
	\quad+\hat{\alpha}c_0 \delta_{\mrC,2}\|\mR\|_2 \|v\|_2L\left(\frac{L}{\sqrt{n}}\|\mx_k-\mathbf{1}\ox_k\|_2+L\|\ox_k-x^*\|_2\right)\\
	\le \left(\sigma_{\mrC}+ \hat{\alpha}c_0\delta_{\mrC,2}\|\mR\|_2 L\right)\|\my_k-v\oy_k\|_{\mrC}\\
	\quad+c_0 \delta_{\mrC,2} L\left(\|\mR-\mI\|_2+\hat{\alpha}\|\mR\|_2 \|v\|_2\frac{L}{\sqrt{n}}\right)\|\mx_k-\mathbf{1}\ox_k\|_{\mrR}\\
	\quad+\hat{\alpha}c_0 \delta_{\mrC,2}\|\mR\|_2 \|v\|_2 L^2\|\ox_k-x^*\|_2.
	\end{array}
	\end{equation*}
	
	\section{Proofs for G-Push-Pull}
	\subsection{Proof of Lemma \ref{lem: x-xstar_pre gpp}}
	\label{proof: lem: x-xstar_pre gpp}
	By (\ref{algorithm G-P-P}),
	\begin{equation*}
	\begin{array}{l}
	\quad\bar{x}_{k+1}=\frac{\bar{u}^{\T}}{n}\left(\mR_k\mx_k-\alpha\mQ_k \my_k\right)\\
	=\bar{x}_k+\frac{\bar{u}^{\T}}{n}\left(\tilde{\mR}_k\mx_k-\alpha\mQ_k \my_k\right)\\
	=\bar{x}_k-\alpha\frac{\bar{u}^{\T}}{n}\mQ_k \my_k+\frac{\bar{u}^{\T}}{n}\tilde{\mR}_k\left(\mx_k-\mathbf{1}\ox_k\right).
	\end{array}
	\end{equation*}
	It follows that
	\begin{equation*}
	\begin{array}{l}
	\quad\|\bar{x}_{k+1}-x^*\|_2^2\\
	=\left\|\bar{x}_k-\alpha\frac{\bar{u}^{\T}}{n}\mQ_k \my_k+\frac{\bar{u}^{\T}}{n}\tilde{\mR}_k\left(\mx_k-\mathbf{1}\ox_k\right)-x^*\right\|_2^2\\
	= \|\bar{x}_k-x^*\|_2^2\\
	\quad-2\left\langle \bar{x}_k-x^*, \alpha\frac{\bar{u}^{\T}}{n}\mQ_k \my_k-\frac{\bar{u}^{\T}}{n}\tilde{\mR}_k\left(\mx_k-\mathbf{1}\ox_k\right)\right\rangle\\
	\quad+\left\|\alpha\frac{\bar{u}^{\T}}{n}\mQ_k \my_k-\frac{\bar{u}^{\T}}{n}\tilde{\mR}_k\left(\mx_k-\mathbf{1}\ox_k\right)\right\|_2^2.
	\end{array}
	\end{equation*}
	Taking conditional expectation on both sides,
	\begin{equation*}
	\begin{array}{l}
	\bE\left[\|\bar{x}_{k+1}-x^*\|_2^2\mid\mathcal{H}_k\right]
	= \|\bar{x}_k-x^*\|_2^2\\
	-2\alpha\left\langle \bar{x}_k-x^*, \frac{\bar{u}^{\T}}{n}\bE[\mQ_k] \my_k\right\rangle
	+\alpha^2\bE\left[\left\|\frac{\bar{u}^{\T}}{n}\mQ_k \my_k\right\|_2^2\mid\mathcal{H}_k\right]\\
	+\bE\left[\left\|\frac{\bar{u}^{\T}}{n}\tilde{\mR}_k\left(\mx_k-\mathbf{1}\ox_k\right)\right\|_2^2\mid\mathcal{H}_k\right].
	\end{array}
	\end{equation*}
	We now bound the last two terms on the right-hand side. First,
	\begin{equation*}
	\begin{array}{l}
	\quad\bE\left[\left\|\frac{\bar{u}^{\T}}{n}\mQ_k \my_k\right\|_2^2\mid\mathcal{H}_k\right]=\my_k^{\T}\bE\left[\mQ_k^{\T}\frac{\bar{u}}{n}\frac{\bar{u}^{\T}}{n}\mQ_k\right] \my_k\\
	=\my_k^{\T}\bE\left[\mQ_k^{\T}\frac{\bar{u}\bar{u}^{\T}}{n^2}\mQ_k\right] \my_k\le \frac{1}{n^2}\left\|\bE\left[\mQ_k^{\T}\bar{u}\bar{u}^{\T}\mQ_k\right]\right\|_2\|\my_k\|_2^2\\
	\le \frac{2}{n^2}\left\|\bE\left[\mQ_k^{\T}\bar{u}\bar{u}^{\T}\mQ_k\right]\right\|_2\left[2\|\my_k-\bar{v}\oy_k\|_2^2+2\|\bar{v}\|_2^2\|\oy_k\|_2^2\right].
	\end{array}
	\end{equation*}
	Second,
	\begin{equation*}
	\begin{array}{l}
	\quad\bE\left[\left\|\frac{\bar{u}^{\T}}{n}\tilde{\mR}_k\left(\mx_k-\mathbf{1}\ox_k\right)\right\|_2^2\mid\mathcal{H}_k\right]\\
	=\left(\mx_k-\mathbf{1}\ox_k\right)^{\T}\bE\left[\tilde{\mR}_k^{\T}\frac{\bar{u}\bar{u}^{\T}}{n^2}\tilde{\mR}_k\right]\left(\mx_k-\mathbf{1}\ox_k\right)\\
	\le \frac{1}{n^2}\left\|\bE\left[\tilde{\mR}_k^{\T}\bar{u}\bar{u}^{\T}\tilde{\mR}_k\right]\right\|_2\|\mx_k-\mathbf{1}\ox_k\|_2^2.
	\end{array}
	\end{equation*}
	The term with inner product can be rewritten in the following way:
	\begin{equation*}
	\begin{array}{l}
	\quad\left\langle \bar{x}_k-x^*, \frac{\bar{u}^{\T}}{n}\bE[\mQ_k] \my_k\right\rangle=\left\langle \bar{x}_k-x^*, \frac{\bar{u}^{\T}}{n}\bE[\mQ_k] \bar{v}\oy_k\right\rangle\\
	\quad+\left\langle \bar{x}_k-x^*, \frac{\bar{u}^{\T}}{n}\bE[\mQ_k] (\my_k-\bar{v}\oy_k)\right\rangle\\
	=\eta\left\langle \bar{x}_k-x^*, \oy_k\right\rangle+\left\langle \bar{x}_k-x^*, \frac{\bar{u}^{\T}}{n}\bE[\mQ_k] (\my_k-\bar{v}\oy_k)\right\rangle.
	\end{array}
	\end{equation*}
	We now have
	\begin{equation*}
	\begin{array}{l}
	\quad\bE\left[\|\bar{x}_{k+1}-x^*\|_2^2\mid\mathcal{H}_k\right]\\
	\le \|\bar{x}_k-x^*\|_2^2-2\alpha\eta\left\langle \bar{x}_k-x^*, \oy_k\right\rangle\\
	\quad-2\alpha\left\langle \bar{x}_k-x^*, \frac{\bar{u}^{\T}}{n}\bE[\mQ_k] (\my_k-\bar{v}\oy_k)\right\rangle\\
	\quad+\frac{2\alpha^2}{{n^2}}\left\|\bE\left[\mQ_k^{\T}\bar{u}\bar{u}^{\T}\mQ_k\right]\right\|_2\|\my_k-\bar{v}\oy_k\|_2^2\\
	\quad+\frac{2\alpha^2}{n^2}\left\|\bE\left[\mQ_k^{\T}\bar{u}\bar{u}^{\T}\mQ_k\right]\right\|_2\|\bar{v}\|_2^2\|\oy_k\|_2^2\\
	\quad+\frac{1}{n^2}\left\|\bE\left[\tilde{\mR}_k^{\T}\bar{u}\bar{u}^{\T}\tilde{\mR}_k\right]\right\|_2\|\mx_k-\mathbf{1}\ox_k\|_2^2.
	\end{array}
	\end{equation*}
	Notice that
	\begin{equation*}
	\begin{array}{l}
	\quad\|\bar{x}_k-x^*\|_2^2-2\alpha\eta\left\langle \bar{x}_k-x^*, \oy_k\right\rangle\\
	=\|\bar{x}_k-x^*\|_2^2-2\alpha\eta\left\langle \bar{x}_k-x^*, g_k\right\rangle-2\alpha\eta\left\langle \bar{x}_k-x^*, \oy_k-g_k\right\rangle\\
	\le (1-2\alpha\eta\mu)\|\bar{x}_k-x^*\|_2^2-2\alpha\eta\left\langle \bar{x}_k-x^*, \oy_k-g_k\right\rangle\\
	\le (1-2\alpha\eta\mu)\|\bar{x}_k-x^*\|_2^2+2\alpha\eta \|\bar{x}_k-x^*\|\|\oy_k-g_k\|,
	\end{array}
	\end{equation*}
	and
	\[\|\oy_k\|_2^2=\|\oy_k-g_k+g_k\|_2^2\le 2\|\oy_k-g_k\|_2^2+2\|g_k\|_2^2.\]
	We have
	\begin{equation*}
	\hspace{-0.6em}\begin{array}{l}
	\bE\left[\|\bar{x}_{k+1}-x^*\|_2^2\mid\mathcal{H}_k\right]
	\le (1-2\alpha\eta\mu)\|\bar{x}_k-x^*\|_2^2\\
	+2\alpha\eta \|\bar{x}_k-x^*\|_2\|\oy_k-g_k\|_2\\
	+\frac{2\alpha}{n}\| \bar{x}_k-x^*\|_2 \left\|\bar{u}^{\T}\bE[\mQ_k]\right\|_2 \|\my_k-\bar{v}\oy_k\|_2\\
	+\frac{2\alpha^2}{{n^2}}\left\|\bE\left[\mQ_k^{\T}\bar{u}\bar{u}^{\T}\mQ_k\right]\right\|_2\|\my_k-\bar{v}\oy_k\|_2^2\\
	+\frac{4\alpha^2}{n^2}\left\|\bE\left[\mQ_k^{\T}\bar{u}\bar{u}^{\T}\mQ_k\right]\right\|_2\|\bar{v}\|_2^2(\|\oy_k-g_k\|_2^2+\|g_k\|_2^2)\\
	+\frac{1}{n^2}\left\|\bE\left[\tilde{\mR}_k^{\T}\bar{u}\bar{u}^{\T}\tilde{\mR}_k\right]\right\|_2\|\mx_k-\mathbf{1}\ox_k\|_2^2.
	\end{array}
	\end{equation*}
	Note that
	\begin{align*}
	\begin{array}{l}
	\quad 2\alpha\eta \|\bar{x}_k-x^*\|_2\|\oy_k-g_k\|_2\\
	\le \alpha\eta\left(\frac{\mu}{2}\|\bar{x}_k-x^*\|_2^2+\frac{2}{\mu}\|\oy_k-g_k\|_2^2\right),
	\end{array}
	\end{align*}
	and
	\begin{equation*}
	\begin{array}{l}
	\quad\frac{2\alpha}{n}\| \bar{x}_k-x^*\|_2 \left\|\bar{u}^{\T}\bE[\mQ_k]\|_2\|\my_k-\bar{v}\oy_k\right\|_2\\
	\le \alpha\eta\left(\frac{\mu}{2}\|\bar{x}_k-x^*\|_2^2+\frac{2}{\eta^2\mu n^2}\left\|\bar{u}^{\T}\bE[\mQ_k]\right\|_2^2\|\my_k-\bar{v}\oy_k\|_2^2\right).
	\end{array}
	\end{equation*}
	From Lemma \ref{lem: Lipschitz implications} we obtain
	\begin{equation*}
	\begin{array}{l}
	\quad\bE\left[\|\bar{x}_{k+1}-x^*\|_2^2\mid\mathcal{H}_k\right]
	\le (1-\alpha\eta\mu)\|\bar{x}_k-x^*\|_2^2\\
	\quad+\frac{2\alpha\eta}{\mu}\|\oy_k-g_k\|_2^2+\frac{2\alpha}{\eta \mu n^2}\left\|\bar{u}^{\T}\bE[\mQ_k]\right\|_2^2\|\my_k-\bar{v}\oy_k\|_2^2\\
	\quad+\frac{2\alpha^2}{{n^2}}\left\|\bE\left[\mQ_k^{\T}\bar{u}\bar{u}^{\T}\mQ_k\right]\right\|_2\|\my_k-\bar{v}\oy_k\|_2^2\\
	\quad+\frac{4\alpha^2}{n^2}\left\|\bE\left[\mQ_k^{\T}\bar{u}\bar{u}^{\T}\mQ_k\right]\right\|_2\|\bar{v}\|_2^2(\|\oy_k-g_k\|_2^2+\|g_k\|_2^2)\\
	\quad+\frac{1}{n^2}\left\|\bE\left[\tilde{\mR}_k^{\T}\bar{u}\bar{u}^{\T}\tilde{\mR}_k\right]\right\|_2\|\mx_k-\mathbf{1}\ox_k\|_2^2\\
	\le \left[1-\alpha\eta\mu+\frac{4\alpha^2}{n^2}\left\|\bE\left[\mQ_k^{\T}\bar{u}\bar{u}^{\T}\mQ_k\right]\right\|_2\|\bar{v}\|_2^2L^2\right]\|\bar{x}_k-x^*\|_2^2\\
	\quad+\left(\frac{2\alpha\eta L^2}{\mu n}+\frac{4\alpha^2 L^2}{n^3}\left\|\bE\left[\mQ_k^{\T}\bar{u}\bar{u}^{\T}\mQ_k\right]\right\|_2\|\bar{v}\|_2^2\right.\\
	\quad\left.+\frac{1}{n^2}\left\|\bE\left[\tilde{\mR}_k^{\T}\bar{u}\bar{u}^{\T}\tilde{\mR}_k\right]\right\|_2\right)\|\mx_k-\mathbf{1}\ox_k\|_2^2\\
	\quad+\left(\frac{2\alpha}{\eta \mu n^2}\left\|\bar{u}^{\T}\bE[\mQ_k]\right\|_2^2+\frac{2\alpha^2}{{n^2}}\left\|\bE\left[\mQ_k^{\T}\bar{u}\bar{u}^{\T}\mQ_k\right]\right\|_2\right)\|\my_k-\bar{v}\oy_k\|_2^2.
	\end{array}
	\end{equation*}
	
	\subsection{Proof of Lemma \ref{lem: linear system gossip}}
	\label{proof: lem: linear system gossip}
	
	The first inequality follows directly from Lemma \ref{lem: x-xstar_pre gpp} and Lemma \ref{lem: norm equivalence 2}.
	
	For the second inequality, note that from (\ref{mx ox pre gpp}) we have
	\begin{equation*}
	\begin{array}{l}
	\quad\left\|\mx_{k+1}-\mathbf{1}\ox_{k+1}\right\|_{\mrS}^2\\
	\le \left[\left\|\left(\mI-\frac{\mathbf{1}\bar{u}^{\T}}{n}\right)\mR_k\mx_k\right\|_{\mrS}+\left\|\left(\mI-\frac{\mathbf{1}\bar{u}^{\T}}{n}\right)\alpha\mQ_k\my_k\right\|_{\mrS}\right]^2\\
	=\left\|\left(\mI-\frac{\mathbf{1}\bar{u}^{\T}}{n}\right)\mR_k\mx_k\right\|_{\mrS}^2+\alpha^2\left\|\left(\mI-\frac{\mathbf{1}\bar{u}^{\T}}{n}\right)\mQ_k\my_k\right\|_{\mrS}^2\\
	\quad+2\alpha\left\|\left(\mI-\frac{\mathbf{1}\bar{u}^{\T}}{n}\right)\mR_k\mx_k\right\|_{\mrS}\left\|\left(\mI-\frac{\mathbf{1}\bar{u}^{\T}}{n}\right)\mQ_k\my_k\right\|_{\mrS}.
	\end{array}
	\end{equation*}
	In light of Lemma \ref{lem: x_consensus gpp},
	\begin{equation*}
	\begin{array}{l}
	\quad\bE\left[\left\|\mx_{k+1}-\mathbf{1}\ox_{k+1}\right\|_{\mrS}^2\mid\mathcal{H}_k\right]\\
	\le \left(\sigma_{\bar{\mrR}}+\frac{1-\sigma_{\bar{\mrR}}}{2}\right)\left\|\mx_k-\mathbf{1}\ox_k\right\|_{\mrS}^2+\alpha^2\left(1+\frac{2\sigma_{\bar{\mrR}}}{1-\sigma_{\bar{\mrR}}}\right)\|\bar{\mV}_{\mrQ}\|_2\|\my_k\|_{\mrS}^2\\
	=\frac{(1+\sigma_{\bar{\mrR}})}{2}\left\|\mx_k-\mathbf{1}\ox_k\right\|_{\mrS}^2+\alpha^2\left(\frac{1+\sigma_{\bar{\mrR}}}{1-\sigma_{\bar{\mrR}}}\right)\|\bar{\mV}_{\mrQ}\|_2\|\my_k\|_{\mrS}^2.
	\end{array}
	\end{equation*}
	Since
	\begin{equation*}
	\begin{array}{l}
	\quad\|\my_k\|_{\mrS}^2= \left\|\my_k-\bar{v}\oy_k+\bar{v}(\oy_k-g_k)+\bar{v}g_k\right\|_{\mrS}^2\\
	\le 3\left[\|\my_k-\bar{v}\oy_k\|_{\mrS}^2+\|\bar{v}\|_{\mrS}^2\|\oy_k-g_k\|_2^2+\|\bar{v}\|_{\mrS}^2\|g_k\|_2^2\right]\\
	\le 3\left[\|\my_k-\bar{v}\oy_k\|_{\mrS}^2+\frac{L^2}{n}\|\bar{v}\|_{\mrS}^2\|\mx_k-\mathbf{1}\ox_k\|_2^2+L^2\|\bar{v}\|_{\mrS}^2\|\ox_k-x^*\|_2^2\right].
	\end{array}
	\end{equation*}
	We have
	\begin{equation*}
	\begin{array}{l}
	\quad\bE\left[\left\|\mx_{k+1}-\mathbf{1}\ox_{k+1}\right\|_{\mrS}^2\mid\mathcal{H}_k\right]\le \frac{(1+\sigma_{\bar{\mrR}})}{2}\left\|\mx_k-\mathbf{1}\ox_k\right\|_{\mrS}^2\\
	\quad+3\alpha^2\left(\frac{1+\sigma_{\bar{\mrR}}}{1-\sigma_{\bar{\mrR}}}\right)\|\bar{\mV}_{\mrQ}\|_2\left[\|\my_k-\bar{v}\oy_k\|_{\mrS}^2+\frac{L^2}{n}\|\bar{v}\|_{\mrS}^2\|\mx_k-\mathbf{1}\ox_k\|_2^2\right.\\
	\quad +L^2\|\bar{v}\|_{\mrS}^2\|\ox_k-x^*\|_2^2\Big]\\
	\le \left[\frac{(1+\sigma_{\bar{\mrR}})}{2}+\frac{3\alpha^2L^2}{n}\left(\frac{1+\sigma_{\bar{\mrR}}}{1-\sigma_{\bar{\mrR}}}\right)\|\bar{\mV}_{\mrQ}\|_2\|\bar{v}_{\mrS}\|^2\right]\left\|\mx_k-\mathbf{1}\ox_k\right\|_{\mrS}^2\\
	\quad+3\alpha^2L^2\left(\frac{1+\sigma_{\bar{\mrR}}}{1-\sigma_{\bar{\mrR}}}\right)\|\bar{\mV}_{\mrQ}\|_2\|\bar{v}\|_{\mrS}^2\|\ox_k-x^*\|_2^2\\
	\quad+3\alpha^2\left(\frac{1+\sigma_{\bar{\mrR}}}{1-\sigma_{\bar{\mrR}}}\right)\|\bar{\mV}_{\mrQ}\|_2\delta_{\mrS,\mrD}\|\my_k-\bar{v}\oy_k\|_{\mrD}^2.
	\end{array}
	\end{equation*}
	
	For the last inequality, from (\ref{my oy pre gpp}) and Lemma \ref{lem: y_alignment gpp} we have
	\begin{equation*}
	\begin{array}{l}
	\quad\left\|\my_{k+1}-\frac{\bar{v} \mathbf{1}^{\T}}{n}\my_{k+1}\right\|_{\mrD}^2\\
	\le \left(\left\|\left(\mI-\frac{\bar{v} \mathbf{1}^{\T}}{n}\right)\mC_k\my_k\right\|_{\mrD}\right.\\
	\quad\left.+\left\|\left(\mI-\frac{\bar{v} \mathbf{1}^{\T}}{n}\right)\left[\nabla F(\mx_{k+1})-\nabla F(\mx_k)\right]\right\|_{\mrD}\right)^2\\
	\le \left(1+\frac{1-\sigma_{\bar{\mrC}}}{2\sigma_{\bar{\mrC}}}\right)\left(\sigma_{\bar{\mrC}}\left\|\my_k-\bar{v}\oy_k\right\|_{\mrD}^2+2\gamma^2\|\bar{\mV}_{\mrE}\|_2\left\|\bar{v}\oy_k\right\|_{\mrD}^2\right)\\
	\quad+\left(1+\frac{2\sigma_{\bar{\mrC}}}{1-\sigma_{\bar{\mrC}}}\right)\left\|\left(\mI-\frac{\bar{v} \mathbf{1}^{\T}}{n}\right)\left[\nabla F(\mx_{k+1})-\nabla F(\mx_k)\right]\right\|_{\mrD}^2\\
	= \frac{(1+\sigma_{\bar{\mrC}})}{2}\left\|\my_k-\bar{v}\oy_k\right\|_{\mrD}^2+\frac{(1+\sigma_{\bar{\mrC}})}{\sigma_{\bar{\mrC}}}\gamma^2\|\bar{\mV}_{\mrE}\|_2\left\|\bar{v}\oy_k\right\|_{\mrD}^2\\
	\quad+\left(\frac{1+\sigma_{\bar{\mrC}}}{1-\sigma_{\bar{\mrC}}}\right)\left\|\left(\mI-\frac{\bar{v} \mathbf{1}^{\T}}{n}\right)\left[\nabla F(\mx_{k+1})-\nabla F(\mx_k)\right]\right\|_{\mrD}^2.
	\end{array}
	\end{equation*}
	Notice that by (\ref{algorithm G-P-P}) and Assumption \ref{asp; strconvex Lipschitz},
	\begin{equation*}
	\begin{array}{l}
	\quad\|\nabla F(\mx_{k+1})-\nabla F(\mx_k)\|_{\mrD}^2
	\le \delta_{\mrD,2}^2L^2\|\mx_{k+1}-\mx_k\|_2^2\\
	=\delta_{\mrD,2}^2 L^2\|\mR_k\mx_k-\alpha\mQ_k \my_k-\mx_k\|_2^2\\
	\le 2\delta_{\mrD,2}^2 L^2\|(\mR_k-\mI)(\mx_k-\mathbf{1}\ox_k)\|_2^2+2\alpha^2\delta_{\mrD,2}^2 L^2\|\mQ_k\|_2^2\|\my_k\|_2^2\\
	\le 2\delta_{\mrD,2}^2 L^2\|\mR_k-\mI\|_2^2\|\mx_k-\mathbf{1}\ox_k\|_2^2\\
	\quad+4\alpha^2\delta_{\mrD,2}^2 L^2\|\mQ_k\|_2^2(\|\my_k-\bar{v}\oy_k\|_2^2+\|\bar{v}\oy_k\|_2^2).
	\end{array}
	\end{equation*}
	We have
	\begin{equation*}
	\hspace{-0.6em}\begin{array}{l}
	\quad\left\|\my_{k+1}-\bar{v}\oy_{k+1}\right\|_{\mrD}^2
	\le
	\frac{(1+\sigma_{\bar{\mrC}})}{2}\left\|\my_k-\bar{v}\oy_k\right\|_{\mrD}^2\\
	\quad+\frac{(1+\sigma_{\bar{\mrC}})}{\sigma_{\bar{\mrC}}}\gamma^2\|\bar{\mV}_{\mrE}\|_2\left\|\bar{v}\oy_k\right\|_{\mrD}^2\\
	\quad+\delta_{\mrD,2}^2L^2\left(\frac{1+\sigma_{\bar{\mrC}}}{1-\sigma_{\bar{\mrC}}}\right)\left\|\mI-\frac{\bar{v} \mathbf{1}^{\T}}{n}\right\|_{\mrD}^2\left[2\|\mR_k-\mI\|_2^2\|\mx_k-\mathbf{1}\ox_k\|_2^2\right.\\
	\quad\left.+4\alpha^2\|\mQ_k\|_2^2(\|\my_k-\bar{v}\oy_k\|_2^2+\|\bar{v}\oy_k\|_2^2)\right]\\ 
	\le\left[\frac{(1+\sigma_{\bar{\mrC}})}{2}+4\alpha^2 \delta_{\mrD,2}^2 L^2\left(\frac{1+\sigma_{\bar{\mrC}}}{1-\sigma_{\bar{\mrC}}}\right)\left\|\mI-\frac{\bar{v} \mathbf{1}^{\T}}{n}\right\|_{\mrD}^2 \|\mQ_k\|_2^2\right]\\
	\quad\cdot\left\|\my_k-\bar{v}\oy_k\right\|_2^2\\
	\quad+2\delta_{\mrD,2}^2 L^2\left(\frac{1+\sigma_{\bar{\mrC}}}{1-\sigma_{\bar{\mrC}}}\right)\left\|\mI-\frac{\bar{v} \mathbf{1}^{\T}}{n}\right\|_{\mrD}^2 \|\mR_k-\mI\|_2^2\|\mx_k-\mathbf{1}\ox_k\|_2^2\\
	\quad+\left[\frac{(1+\sigma_{\bar{\mrC}})}{\sigma_{\bar{\mrC}}}\gamma^2\|\bar{\mV}_{\mrE}\|_2\|\bar{v}\|_{\mrD}^2\right.\\
	\quad\left.+4\alpha^2\delta_{\mrD,2}^2 L^2\left(\frac{1+\sigma_{\bar{\mrC}}}{1-\sigma_{\bar{\mrC}}}\right)\left\|\mI-\frac{\bar{v} \mathbf{1}^{\T}}{n}\right\|_{\mrD}^2\|\mQ_k\|_2^2\|\bar{v}\|_2^2\right]\left\|\oy_k\right\|_2^2\\
	\le\left[\frac{(1+\sigma_{\bar{\mrC}})}{2}+4\alpha^2 \delta_{\mrD,2}^2 L^2\left(\frac{1+\sigma_{\bar{\mrC}}}{1-\sigma_{\bar{\mrC}}}\right)\left\|\mI-\frac{\bar{v} \mathbf{1}^{\T}}{n}\right\|_{\mrD}^2 \|\mQ_k\|_2^2\right]\\
	\quad\cdot\left\|\my_k-\bar{v}\oy_k\right\|_{\mrD}^2\\
	\quad+2\delta_{\mrD,2}^2 L^2\left(\frac{1+\sigma_{\bar{\mrC}}}{1-\sigma_{\bar{\mrC}}}\right)\left\|\mI-\frac{\bar{v} \mathbf{1}^{\T}}{n}\right\|_{\mrD}^2 \|\mR_k-\mI\|_2^2\|\mx_k-\mathbf{1}\ox_k\|_{\mrS}^2\\
	\quad+2L^2\left[\frac{(1+\sigma_{\bar{\mrC}})}{\sigma_{\bar{\mrC}}}\gamma^2\|\bar{\mV}_{\mrE}\|_2\|\bar{v}\|_{\mrD}^2\right.\\
	\quad\left.+4\alpha^2\delta_{\mrD,2}^2 L^2\left(\frac{1+\sigma_{\bar{\mrC}}}{1-\sigma_{\bar{\mrC}}}\right)\left\|\mI-\frac{\bar{v} \mathbf{1}^{\T}}{n}\right\|_{\mrD}^2\|\mQ_k\|_2^2\|\bar{v}\|_2^2\right]\\
	\quad \cdot \left(\frac{1}{n}\|\mx_k-\mathbf{1}\ox_k\|_2^2+\|\ox_k-x^*\|_2^2\right).
	\end{array}
	\end{equation*}
	
	%
	%

	\ifCLASSOPTIONcaptionsoff
	\newpage
	\fi

	
	
	\bibliographystyle{IEEEtran}
	\bibliography{mybib_pushpull}
	%
	%
	%
	
	%
	
	\newpage
	
	\begin{IEEEbiography}[{\includegraphics[width=1in,height=1.2in,clip,keepaspectratio]{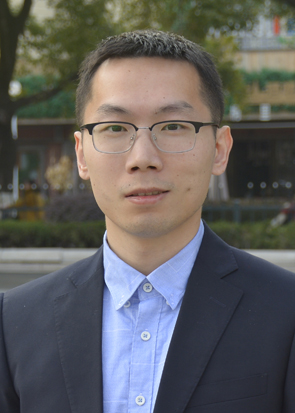}}]{Shi Pu} is currently an assistant professor in the Institute for Data and Decision Analytics, The Chinese University of Hong Kong, Shenzhen, China. He received a B.S. Degree in Engineering Mechanics from Peking University, in 2012, and a Ph.D. Degree in Systems Engineering from the University of Virginia, in 2016. He was a postdoctoral associate at the University of Florida, from 2016 to 2017, a postdoctoral scholar at Arizona State University, from 2017 to 2018, and a postdoctoral associate at Boston University, from 2018 to 2019. His research interests include distributed optimization, network science, machine learning, and game theory.
	\end{IEEEbiography}

\vspace{-1.5cm}
	
	\begin{IEEEbiography}[{\includegraphics[width=1in,height=1.2in,clip,keepaspectratio]{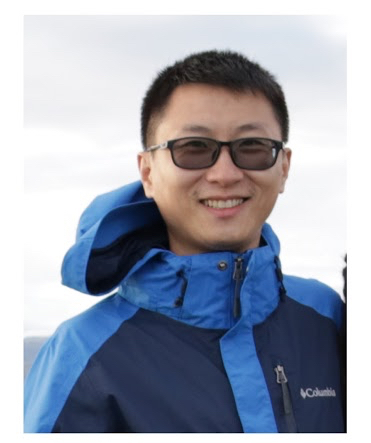}}]{Wei (Wilbur) Shi} was a postdoc in the Electrical and Computer Engineering Department of Princeton University, Princeton, NJ, USA. He obtained his Ph.D. in Control Science and Engineering from the University of Science and Technology of China, Hefei, Anhui, China. He was a postdoc in the Coordinated Science Laboratory, University of Illinois at Urbana-Champaign, Urbana, IL, USA. His current research interest distributes in optimization, learning, and control, and applications in cyber physical systems and internet of things. He was a recipient of the Young Author Best Paper Award in IEEE Signal Processing Society.
	\end{IEEEbiography}

\vspace{-1.5cm}
	
	\begin{IEEEbiography}[{\includegraphics[width=1in,height=1.2in,clip,keepaspectratio]{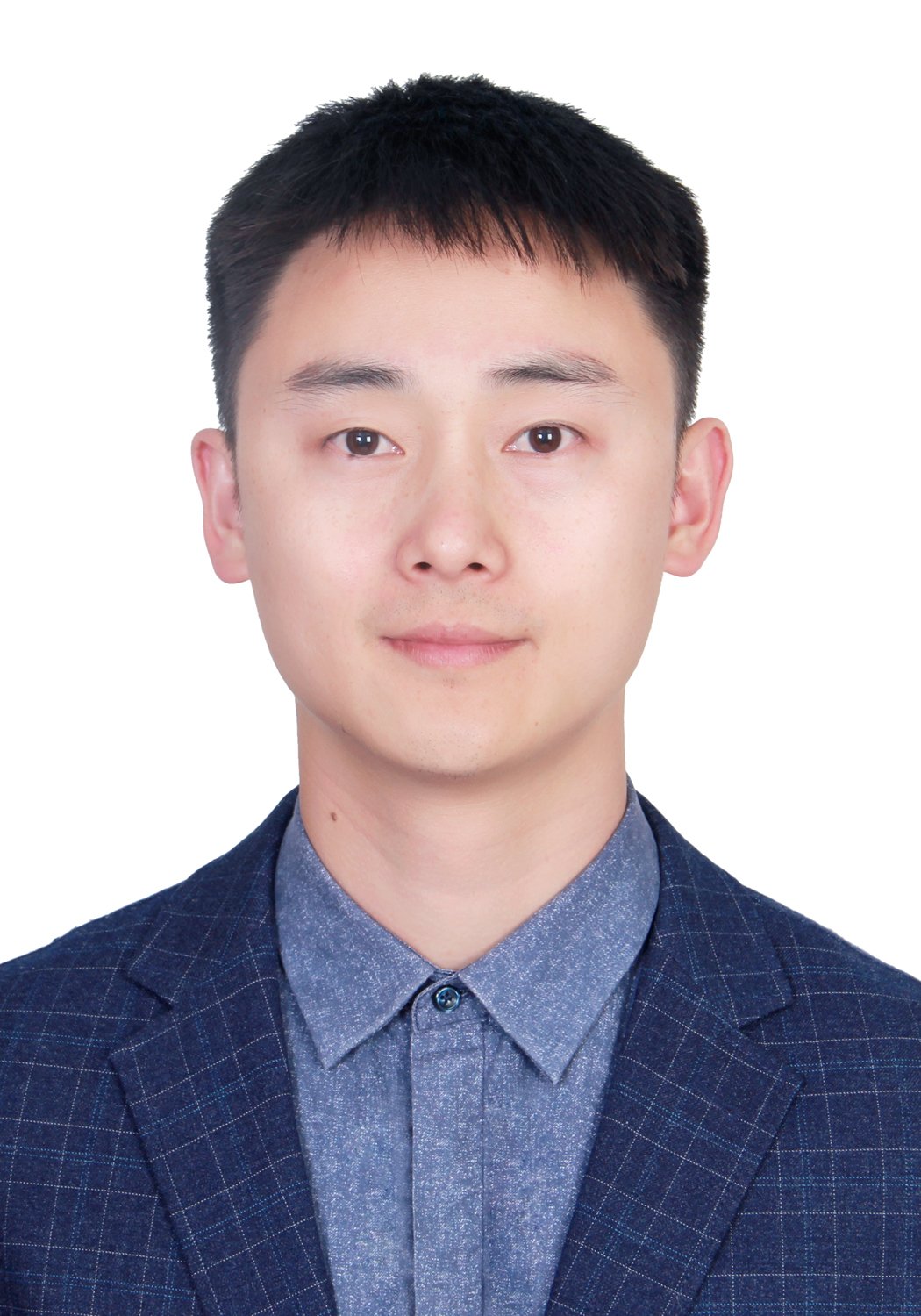}}]{Jinming Xu} received the B.S. degree in mechanical engineering from Shandong University, China, in 2009 and the Ph.D. degree in Electrical and Electronic Engineering from Nanyang Technological University (NTU), Singapore, in 2016. He was a research fellow of the EXQUITUS center at NTU from 2016 to 2017; he also received postdoctoral training in the Ira A. Fulton Schools of Engineering, Arizona State University, from 2017 to 2018, and School of Industrial Engineering, Purdue University, from 2018 to 2019, respectively. Currently, he is an assistant professor with the College of Control Science and Engineering at Zhejiang University, China. His research interests include distributed optimization and control, machine learning and network science.
		
	\end{IEEEbiography}
	
	\vspace{-1.5cm}
	
	\begin{IEEEbiography}[{\includegraphics[width=1in,height=1.2in,clip,keepaspectratio]{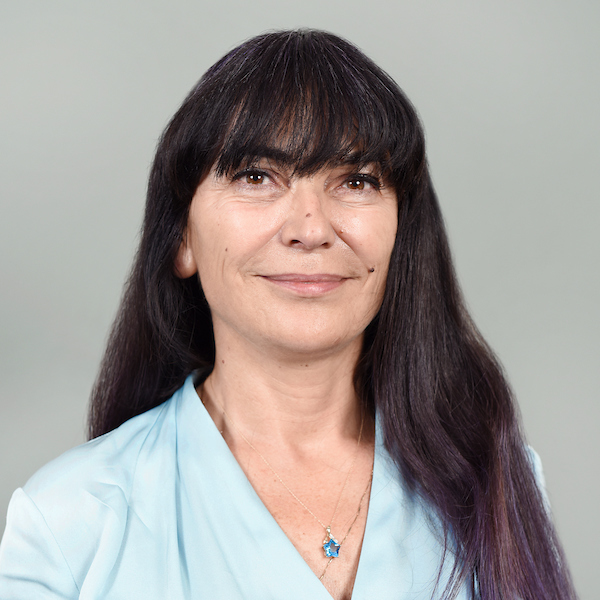}}]{Angelia Nedi\'{c}} has a Ph.D. from Moscow State University, Moscow, Russia, in Computational Mathematics and Mathematical Physics (1994), and a Ph.D. from Massachusetts Institute of Technology, Cambridge, USA in Electrical and Computer Science Engineering (2002). She has worked as a senior engineer in BAE Systems North America, Advanced Information Technology Division at Burlington, MA. Currently, she is a faculty member of the school of Electrical, Computer and Energy Engineering at Arizona State University at Tempe. Prior to joining Arizona State University, she has been a Willard Scholar faculty member at the University of Illinois at Urbana-Champaign. She is a recipient (jointly with her co-authors) of the Best Paper Awards at the Winter Simulation Conference 2013 and at the International Symposium on Modeling and Optimization in Mobile, Ad Hoc and Wireless Networks (WiOpt) 2015.  Her general research interest is in optimization, large scale complex systems dynamics, variational inequalities and games.
	\end{IEEEbiography}
	
	
	

\end{document}